\newtheorem{theorem}{Theorem}[section]
\newtheorem{lemma}[theorem]{Lemma}
\newtheorem{proposition}[theorem]{Proposition}
\newtheorem{corollary}[theorem]{Corollary}
\newtheorem{conjecture}[theorem]{Conjecture}
\theoremstyle{definition}
\newtheorem{definition}[theorem]{Definition}
\newtheorem{remark}[theorem]{Remark}
\numberwithin{equation}{section}
\newtheorem{example}[theorem]{Example}
\newtheorem{assumption}[theorem]{Assumption}
\newtheorem{setting}[theorem]{Setting}
\begin{document}

\normalfont

\title{Period Rings with Big Coefficients and Applications III}
\author{Xin Tong} 

\maketitle

\begin{abstract}
\rm We continue our study on the corresponding period rings with big coefficients, with the corresponding application in mind on relative $p$-adic Hodge theory and noncommutative analytic geometry. In this article, we extend the discussion of the corresponding noncommutative descent over \'etale topology to the corresponding noncommutative descent over pro-\'etale topology in both Tate and analytic setting. 
\end{abstract}

\footnotetext[1]{Version: Mar 7 2021.}
\footnotetext[2]{Keywords and Phrases: Noncommutative Deformation, Descent.}


\tableofcontents


\section{Introduction}

\subsection{Introduction and Main Results}

\noindent In our previous work on the corresponding period rings with big coefficients and applications, we essentially studied many very general deformations of the corresponding pseudocoherent sheaves over adic Banach rings in the sense of \cite{KL1} and \cite{KL2}. We studied the corresponding glueing in the fashion of \cite{KL1} and \cite{KL2} carrying sufficiently large coefficients. Although we have some conditions on both the corresponding adic spaces we are considering and the corresponding coefficients, but the descent results on their own are already general enough to tackle some specific situations in the Hodge-Iwasawa theoretic consideration and noncommutative analytic geometry such as in \cite{TX3} and \cite{TX4}.

\indent First in our current scope of discussion we will first consider the corresponding extension of our paper \cite{TX2} to the corresponding pro-\'etale topology setting. In the Tate setting, this will follow the corresponding unrelative situation in \cite{KL2} while in the analytic situation the corresponding will following the corresponding unrelative situation in \cite{Ked1} (in the context of \cref{chapter2} and \cref{chapter3}):

\begin{theorem} \mbox{\bf{(See \cref{proposition2.10}, \cref{proposition3.17})}}
Over sheafy Tate adic Banach rings or analytic Huber rings, the corresponding descent of stably pseudocoherent modules (with respect to the underlying spaces) carrying the corresponding noncommutative coefficients holds over \'etale sites and pro-\'etale sites. Here for pro-\'etale sites we will consider only perfectoid rings. 	
\end{theorem}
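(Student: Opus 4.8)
The plan is to prove the statement in the Tate and the analytic settings in parallel, reducing it to the corresponding unrelative descent theorems --- \cite{KL2} in the Tate case and \cite{Ked1} in the analytic case --- together with the \'etale-topology version carrying noncommutative coefficients already established in \cite{TX2}; the Tate half is recorded below as \cref{proposition2.10} and the analytic half as \cref{proposition3.17}. Since the \'etale statement is in hand, the genuinely new content is pro-\'etale descent over a \emph{perfectoid} base, and I would organise it around two inputs: (i) the acyclicity, on the pro-\'etale site of a perfectoid Tate (resp.\ analytic) ring, of the completed structure presheaf carrying the fixed noncommutative Banach coefficient $A$; and (ii) the fact that a pro-\'etale cover of such a base is, up to refinement, a cofiltered limit of finite composites of rational localizations and finite \'etale covers, again with perfectoid limit. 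One point worth isolating at the outset is that $A$ plays a purely ``horizontal'' role: as $A$ is a Banach algebra whose underlying module is orthogonalizable, hence flat for the completed tensor product, applying $A\,\widehat{\otimes}\,(-)$ preserves strict exactness of the complexes that occur and commutes with the almost-mathematics carried out on the perfectoid (``vertical'') side with respect to a pseudouniformizer; so every cohomological vanishing statement for the scalar structure sheaf transfers verbatim to the $A$-linear situation, and the noncommutativity adds only bookkeeping.

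For input (i) I would invoke the perfectoid correspondence and the almost purity theorem in the form packaged in \cite{KL2} and \cite{Ked1}: for a pro-\'etale cover of a perfectoid base the associated augmented \v{C}ech complex of the structure sheaf is strictly exact after applying the almost functor, and strictly exact on the nose in the degrees that matter; tensoring with the flat orthogonalizable $A$ leaves this intact. Descent of the structure sheaf then gives descent of finite projective $A\,\widehat{\otimes}\,\mathcal{O}$-modules, realising such a module as the image of an idempotent matrix over a finite free module and noting that such matrices are sections of the already-descended structure sheaf; and it gives full faithfulness of the descent functor for general modules by a standard application of the vanishing of low-degree cohomology of the cover to internal Hom sheaves. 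This reduces everything to essential surjectivity of the descent functor on stably pseudocoherent modules.

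For that I would bootstrap from the finite projective case. Given a glueing datum for a pseudocoherent module, choose over each member of the cover a two-term presentation $(A\,\widehat{\otimes}\,\mathcal{O})^{\oplus m}\to(A\,\widehat{\otimes}\,\mathcal{O})^{\oplus n}\to M\to 0$; the presenting matrix is a section of a finite projective sheaf, hence descends by the previous step, its cokernel glues to a finitely presented module, and iterating the same argument along the chain of syzygies upgrades this to an honestly pseudocoherent module. That the descended module is moreover \emph{stably} pseudocoherent --- pseudocoherent after every rational localization of the base --- is then inherited, because rational localizations of a perfectoid base are again perfectoid, the given cover pulls back to a cover there, and the whole construction is compatible with such localizations. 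The alternative route, closer to the unrelative template, is to apply the \'etale descent of \cite{TX2} at each finite stage of the cofiltering tower from (ii) and then descend along the completed filtered colimit at the top.

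The main obstacle is precisely this last interchange: showing that a compatible system of stably pseudocoherent modules over the finite stages of a perfectoid pro-\'etale tower glues to a stably pseudocoherent module over the completed limit, i.e.\ that the property ``pseudocoherent'' commutes with the relevant completed filtered colimit. This requires input (i) in a suitably uniform, or almost-uniform, form so as to control the strict exactness of the presenting complexes in the limit, together with a Mittag-Leffler-type finiteness feeding the syzygy chain stage by stage; and it requires one to verify explicitly, rather than merely assert, that the noncommutative coefficient $A$ does not interfere with the almost-mathematics --- which it does not, since $A$ is orthogonal to the perfectoid/almost structure, but this orthogonality must be spelled out. Once this is settled, the remainder --- the cocycle bookkeeping, the passage from modules to their associated sheaves, and the compatibility with rational localization --- is formal.
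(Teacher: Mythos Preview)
Your plan is broadly workable but takes a different route from the paper, and the detour you choose is exactly where your acknowledged ``main obstacle'' lives. The paper (following \cite[Lemma~3.4.4, Corollary~3.4.5]{KL2}) does \emph{not} pass through almost mathematics or a Mittag--Leffler argument along the pro-\'etale tower. Instead, for a perfectoid base one has a direct-sum decomposition of the completed infinite level of a faithfully finite \'etale tower,
\[
\widehat{\varinjlim_k}\, W_k \;\cong\; W_0 \oplus \widehat{\bigoplus}_{k\geq 1} W_k/W_{k-1},
\]
which after tensoring with the coefficient $V$ persists verbatim. This splitting yields 2-pseudoflatness of $W_{0,V}\to W_{\infty,V}$ at once, and together with the \'etale-site acyclicity already in \cite{TX2} it gives the acyclicity of the presheaf attached to any \'etale-stably pseudocoherent module (the analogue of \cite[Theorem~3.4.6]{KL2}). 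The descent theorem is then read off from the abstract axiomatic machine \cite[Proposition~9.2.6]{KL1} (respectively its analogue in \cite[Theorem~2.9.9, Lemma~1.10.4]{Ked1}): one simply checks that the hypotheses of that machine are satisfied. No bootstrap from finite projectives via idempotents, no descent of presenting matrices, and no colimit--pseudocoherence interchange is needed.

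So the gap in your proposal is not a mathematical error but an unnecessary difficulty you have set yourself: the ``interchange'' you flag --- that a compatible system of stably pseudocoherent modules over the finite stages glues over the completed limit --- is genuinely delicate in general, and you do not resolve it. The paper avoids it entirely because the tower \emph{splits}, so the infinite level is a retract of a completed direct sum of \'etale algebras over the base, and 2-pseudoflatness falls out. Your assumption that the coefficient is orthogonalizable and hence flat for $\widehat{\otimes}$ is also stronger than what the paper uses; the paper's hypotheses on $V$ are only what is needed for the decomposition and 2-pseudoflatness to survive $-\widehat{\otimes} V$, and the noncommutativity of $V$ indeed plays no role beyond bookkeeping, as you say. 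If you replace your inputs (i)--(ii) and the bootstrap by the splitting lemma plus the abstract descent criterion, your outline collapses to the paper's proof.
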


\indent We then consider some application to the corresponding 'complete' noetherian situation (this will mean that we consider the noetherian deformation of noetherian rings) in the context of \cref{section6}:

\begin{theorem} \mbox{\bf{(See \cref{proposition6.9})}} 
Over noetherian sheafy Tate adic Banach rings, the corresponding descent of finitely presented modules (with respect to the underlying spaces) carrying the corresponding noncommutative coefficients (such that the products of rings are noetherian as well) holds over \'etale sites.   	
\end{theorem}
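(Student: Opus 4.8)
The plan is to deduce the pro-étale (here: perfectoid) and étale noncommutative descent for finitely presented modules over noetherian sheafy Tate adic Banach rings from the more general descent result already established, namely Theorem for stably pseudocoherent modules over sheafy Tate adic Banach rings carrying noncommutative coefficients (\cref{proposition2.10}), by checking that in the noetherian setting the classes of modules and rings involved are stable under the relevant base changes. The first step is to observe that over a noetherian ring a module is finitely presented if and only if it is finitely generated, and that pseudocoherence reduces to finite generation; more precisely, over a noetherian adic Banach ring every finitely generated module is automatically pseudocoherent, and in fact stably pseudocoherent once we know the relevant localizations and completed tensor products remain noetherian. This is why the hypothesis ``such that the products of rings are noetherian as well'' appears in the statement: it guarantees that the noncommutative coefficient ring $A \widehat{\otimes} R$ and all the terms in the relevant Čech–type complex stay noetherian, so that finite presentation is preserved along the cover and the general descent machinery applies verbatim.

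Concretely, I would proceed as follows. First, fix an étale (resp. pro-étale) cover $\mathrm{Spa}(S,S^+) \to \mathrm{Spa}(R,R^+)$ in the appropriate site, with $R$ noetherian sheafy Tate; in the pro-étale/perfectoid case one reduces to a cofinal system of such covers and passes to the limit, using that the perfectoid condition together with sheafyness makes the structure presheaf a sheaf with vanishing higher cohomology on rational subsets. Second, verify that the hypothesis ``products of rings are noetherian'' is inherited by the Čech nerve: the rings $S$, $S \widehat{\otimes}_R S$, etc., tensored with the noncommutative coefficient ring, remain noetherian, so stable pseudocoherence over these rings is equivalent to finite generation. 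Third, invoke \cref{proposition2.10}: a stably pseudocoherent module with noncommutative coefficients descends along the cover, i.e. the natural functor from modules over the base to the category of modules over the cover equipped with descent datum is an equivalence. Fourth, translate back: since over the noetherian base and over every stage of the nerve ``stably pseudocoherent'' $=$ ``finitely presented'' $=$ ``finitely generated'', the equivalence furnished by \cref{proposition2.10} restricts to an equivalence between finitely presented modules with noncommutative coefficients over $R$ and the corresponding descent category over the cover.

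The main obstacle I anticipate is precisely the bookkeeping around the noncommutative coefficients and the noetherian hypothesis on the relevant tensor products: one must check carefully that $A \widehat{\otimes}_{\mathbb{Q}_p} R$ (or whatever the relevant completed coefficient construction is in \cref{chapter2}) is noetherian when $R$ is, and that this persists after the rational localizations and completed tensor powers appearing in the descent datum — this is not automatic for completed tensor products and is exactly what the extra hypothesis is buying. A secondary subtlety is the pro-étale step: since finitely presented modules are not obviously preserved under filtered colimits of rings, one should either restrict attention to the perfectoid case where the relevant limit of rings is again sheafy (as the theorem statement already does) and argue that the module-theoretic descent along the limit cover follows from descent along each finite level plus a Mittag-Leffler/approximation argument, or appeal directly to the pro-étale descent already packaged in \cref{proposition3.17}. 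Modulo these checks, the argument is a formal reduction to the general stably pseudocoherent descent theorem, with the noetherian hypotheses doing the work of collapsing ``stably pseudocoherent'' to ``finitely presented''.
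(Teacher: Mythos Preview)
Your core idea is correct and matches the paper: in the noetherian setting, every finitely generated module is automatically pseudocoherent and (since all localizations and finite \'etale covers in the basis stay noetherian by hypothesis) \'etale-stably pseudocoherent, so the descent statement for finitely presented modules is literally the noetherian specialization of the general descent for \'etale-stably pseudocoherent modules. The paper's proof of \cref{proposition6.9} is exactly this one-line reduction, citing \cite[Theorem 2.23]{TX2} (the \'etale descent result from the predecessor paper) rather than developing anything new.

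The main discrepancy is that you route through \cref{proposition2.10} and \cref{proposition3.17}, which are \emph{pro-\'etale} statements over perfectoid bases, whereas the theorem in question is purely about the \'etale site over a noetherian base. This is both unnecessary and slightly problematic: a noetherian Tate ring is essentially never perfectoid, so the hypotheses of \cref{proposition2.10} are not available, and your ``secondary subtlety'' paragraph about filtered colimits and Mittag--Leffler arguments is addressing a difficulty that simply does not arise here. The correct input is the \'etale descent for $V$-\'etale-stably pseudocoherent modules (either \cite[Theorem 2.23]{TX2} as the paper cites, or the analogous \'etale results in \cref{chapter3} of this paper), not the pro-\'etale machinery. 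Once you replace that citation, your Steps 1--4 collapse to the paper's argument; the bookkeeping you flag about completed tensor products staying noetherian is handled by the standing assumption in the Setting of \cref{section6}, and no \v{C}ech-nerve noetherianness beyond the basis elements is needed because the descent is organized via a stable basis of rational localizations and finite \'etale maps, not arbitrary fiber products.
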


\indent We also discussed the corresponding possible application to the descent in certain cases over the corresponding $\infty$-analytic stacks after Bambozzi-Ben-Bassat-Kremnizer \cite{BBBK} and Bambozzi-Kremnizer \cite{BK}, where the latter is really more related to the adic geometry we considered after \cite{KL1} and \cite{KL2}. Our approach certainly is definitely parallel to \cite{KL1} and \cite{KL2}, which is not actually parallel to more $\infty$-method considered by Ben-Bassat-Kremnizer in \cite{BBK}.

\subsection{Future Work}

\indent We expect more interesting applications. For instance partial noetherian spaces could admit some desired descent carrying general noncommutative coefficients (certainly not need to be noetherian coefficients), such as different types of eigenvarieties or Shimura varieties.


\section{Descent over Analytic Adic Banach Rings in the Rational Setting} \label{chapter2}

\subsection{Noncommutative Functional-Analytic Pseudocoherence over Pro-\'Etale Topology}

\indent Now we consider the corresponding discussion of the corresponding glueing deformed pseudocoherent sheaves over \'etale topology which generalize the corresponding discussion in \cite{Ked1}. 

\begin{setting}
Let $(W,W^+)$ be Tate adic Banach uniform ring which is defined over $\mathbb{Q}_p$. And we assume that the ring $V$ over $\mathbb{Q}_p$ is a Banach ring over the $\mathbb{Q}_p$. Assume that the ring $W$ is sheafy. 	
\end{setting}

\begin{remark}
The corresponding glueing of the corresponding pseudocoherent sheaves over the corresponding pro-\'etale site does not need the corresponding notions on the corresponding stability beyond the corresponding \'etale-stably pseudocoherence.	
\end{remark}

\begin{lemma} \mbox{\bf{(After Kedlaya-Liu \cite[Proposition 3.4.3]{KL2})}}
Suppose that we are taking $(W,W^+)$ to be Fontaine perfectoid (note that we are considering the corresponding context of Tate adic Banach situation). Then we have that over the corresponding \'etale site, the corresponding group $H^i(\mathrm{Spa}(W,W^+)_\text{p\'et},\mathcal{O}\widehat{\otimes}V)$ vanishes for each $i>0$, while we have that the corresponding group $H^0(\mathrm{Spa}(W,W^+)_\text{p\'et},\mathcal{O}\widehat{\otimes}V)$ is just isomorphic $W\widehat{\otimes}V$.	
\end{lemma}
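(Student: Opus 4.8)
The plan is to follow the structure of the proof of \cite[Proposition 3.4.3]{KL2} step by step, carrying along the coefficient ring $V$, and to check that each step is compatible with applying the functor $-\,\widehat{\otimes}_{\mathbb{Q}_p}V$. Recall that the classical statement (essentially the acyclicity theorem for the structure sheaf on a perfectoid space, in the Tate adic Banach formulation of Kedlaya--Liu) produces the vanishing of $H^i(\mathrm{Spa}(W,W^+)_\text{p\'et},\mathcal{O})$ for $i>0$ and the identification $H^0 \cong W$ by: (a) reducing pro-\'etale cohomology to a filtered colimit of \'etale cohomologies along a tower of affinoid perfectoid subdomains; (b) on the \'etale side, reducing via a basis of rational subdomains and finite \'etale covers to Tate acyclicity for rational coverings together with finite Galois descent; and (c) controlling the separated completions attached to each perfectoid limit. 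The new point is that at each of these three places the relevant complex of $\mathbb{Q}_p$-Banach modules is not merely exact but \emph{strictly} exact (exact in the quasi-abelian category of Banach spaces, with strict differentials), so that the exactness is preserved after a completed tensor product with the Banach space $V$.

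First I would record the key coefficient lemma: the functor $M\mapsto M\,\widehat{\otimes}_{\mathbb{Q}_p}V$ sends a strictly exact complex of $\mathbb{Q}_p$-Banach modules to a strictly exact complex, commutes with finite direct sums, and commutes (in the appropriate topologized sense) with the filtered colimits and separated completions that intervene in the perfectoid-limit construction of $\mathcal{O}$. This is the big-coefficient analogue of the flatness-type statements recalled in \cref{chapter2}, and it is exactly what lets one pass from a statement about $\mathcal{O}$ to the corresponding statement about $\mathcal{O}\,\widehat{\otimes}\,V$. In particular, applying the lemma to the two-term sheaf-condition complex for $\mathcal{O}$ shows that $\mathcal{O}\,\widehat{\otimes}\,V$ is again a sheaf on the pro-\'etale site, so that its cohomology can be computed by \v{C}ech complexes over a cofinal family of coverings by affinoid perfectoid spaces.

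With the lemma in hand, the argument proceeds as follows. For a covering by rational subdomains one has the Tate--\v{C}ech/Koszul complex for $\mathcal{O}$, which is strictly exact beyond degree $0$; tensoring with $V$ keeps it strictly exact, giving the vanishing and the value $W\,\widehat{\otimes}\,V$ at that level. For a finite \'etale cover $W\to W'$ with Galois group $G$, the normalized cochain complex $C^\bullet(G,W')$ is split exact above degree $0$ via a trace/averaging homotopy, with invariants $W$; since it consists of finite products of copies of $W'$, applying $-\,\widehat{\otimes}\,V$ yields $C^\bullet(G, W'\,\widehat{\otimes}\,V)$, still split exact above degree $0$ with invariants $W\,\widehat{\otimes}\,V$. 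Finally, passing to a perfectoid pro-\'etale tower $\mathrm{Spa}(W,W^+)=\varprojlim_i \mathrm{Spa}(W_i,W_i^+)$, one combines continuity of cohomology on the coherent pro-\'etale site with the compatibility of $-\,\widehat{\otimes}\,V$ with the colimit-and-completion presentation of $\mathcal{O}$ to deduce the assertion for $\mathcal{O}\,\widehat{\otimes}\,V$ from the corresponding assertions for the $\mathcal{O}_i\,\widehat{\otimes}\,V$, hence reducing to the \'etale case already treated.

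I expect the main obstacle to be the bookkeeping around the completed tensor product under the limits, colimits, and infinite products that appear: $-\,\widehat{\otimes}\,V$ does not commute with arbitrary infinite products, so one must organize the \v{C}ech computations along an explicit cofinal system (as in the original perfectoid reduction) in which only finite products and well-behaved filtered colimits intervene, and must verify that strictness — not merely exactness — genuinely propagates through all of these operations. Once this is arranged, the vanishing of $H^i$ for $i>0$ and the identification $H^0(\mathrm{Spa}(W,W^+)_\text{p\'et},\mathcal{O}\,\widehat{\otimes}\,V)\cong W\,\widehat{\otimes}\,V$ follow exactly as in \cite[Proposition 3.4.3]{KL2}.
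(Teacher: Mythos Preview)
Your proposal is correct and is essentially the same approach as the paper's: both defer to the argument of \cite[Proposition 3.4.3]{KL2} and observe that it goes through after applying $-\,\widehat{\otimes}_{\mathbb{Q}_p}V$. The paper's proof is simply the one-line citation ``See \cite[Proposition 3.4.3]{KL2}, where the corresponding \cite[Corollary 3.3.20]{KL2} applies,'' whereas you have unpacked what that citation entails (strict exactness of the Tate/\v{C}ech complexes, split exactness of the finite-\'etale Galois complex, and the passage to the perfectoid limit), so your write-up is more informative but not a different route.
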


\begin{proof}
See \cite[Proposition 3.4.3]{KL2}, where the corresponding \cite[Corollary 3.3.20]{KL2} applies.	
\end{proof}

\indent Now we proceed to consider the corresponding pro-\'etale topology. First we recall the following result from \cite[Lemma 3.4.4]{KL2} which does hold in our situation since we did not change much on the corresponding underlying adic spaces.

\begin{lemma}  \mbox{\bf{(Kedlaya-Liu \cite[Lemma 3.4.4]{KL2})}}
Consider the ring $W$ as above which is furthermore assumed to be Fontaine perfectoid in the sense of \cite[Definition 3.3.1]{KL2}, and consider any direct system of faithfully finite \'etale morphisms as:
\[
\xymatrix@C+0pc@R+0pc{
W_0 \ar[r] \ar[r] \ar[r] &W_1 \ar[r] \ar[r] \ar[r] &W_2  \ar[r] \ar[r] \ar[r] &...,   
}
\]	
where $A_0$ is just the corresponding base $A$. Then the corresponding completion of the infinite level could be decomposed as:
\begin{align}
W_0\oplus\widehat{\bigoplus}_{k=0}^\infty W_k/W_{k-1}.	
\end{align}
As in \cite[Lemma 3.4.4]{KL2} we endow the corresponding completion mentioned above with the corresponding seminorm spectral. And for the corresponding quotient we endow with the corresponding quotient norm.
\end{lemma}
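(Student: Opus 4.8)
The plan is to follow the argument of Kedlaya--Liu \cite[Lemma 3.4.4]{KL2} essentially verbatim: the statement records a structural fact about the tower of underlying adic Banach rings $W_0 \to W_1 \to W_2 \to \dots$ together with the Fontaine perfectoid hypothesis on $W = W_0$, and the noncommutative coefficient ring $V$ does not enter at all, so nothing in \emph{loc.\ cit.} needs to be altered. Concretely I would organize the proof into three stages: a finite-level direct sum decomposition, passage to the (completed) colimit, and the norm bookkeeping that guarantees the completed direct sum on the right-hand side really computes the completion of the infinite level.

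For the finite-level stage I would use that each $W_{k-1} \to W_k$ is faithfully finite \'etale, so that $W_k$ is a finite projective $W_{k-1}$-module and the trace pairing $W_k \otimes_{W_{k-1}} W_k \to W_{k-1}$ is perfect. Since $\mathbb{Q}_p \subseteq W_{k-1}$ the (locally constant, positive) rank of $W_k$ is invertible, so the normalized trace splits the unit inclusion $W_{k-1} \hookrightarrow W_k$; this exhibits a topological direct sum of Banach $W_{k-1}$-modules $W_k \cong W_{k-1} \oplus (W_k/W_{k-1})$, with the second summand identified with the cokernel carrying its quotient norm. Iterating over $1 \le k \le n$ gives $W_n \cong W_0 \oplus \bigoplus_{k=1}^{n} (W_k/W_{k-1})$ compatibly as $n$ grows, and passing to the algebraic direct limit $W_\infty = \bigcup_n W_n$ yields the corresponding decomposition of underlying modules $W_\infty \cong W_0 \oplus \bigoplus_{k=1}^{\infty} (W_k/W_{k-1})$.

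It then remains to complete. I would check that applying completion to the last isomorphism, with $\widehat{W}_\infty$ carrying the spectral seminorm, identifies $\widehat{W}_\infty$ with $W_0 \,\widehat{\oplus}\, \widehat{\bigoplus}_{k=1}^{\infty} (W_k/W_{k-1})$. The hard part here — and the only genuinely substantive point, exactly as in \cite[Lemma 3.4.4]{KL2} — is the uniform control of norms at infinite level: one needs the operator norms of the projections $W_n \twoheadrightarrow W_k/W_{k-1}$ and of the inclusions of the complementary summands to be bounded independently of $n$ and $k$, so that the finite-level splittings assemble into a topological isomorphism in the limit rather than degenerating as $k \to \infty$. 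This is precisely where the Fontaine perfectoid hypothesis does its work: by (almost) purity the \'etale extensions in the tower are almost finite \'etale over the subrings of integral elements, the trace pairings are perfect up to controlled almost-zero error with bounds uniformly close to $1$, the idempotent splittings can therefore be chosen with operator norm $\le 1$, and the spectral seminorm on $\widehat{W}_\infty$ restricts compatibly to each $W_n$. Granting this uniformity — which, once more, involves only the adic spaces $W_k$ and is insensitive to passage to big coefficients — both sides of the displayed formula have the same completion, which gives the asserted decomposition; the quotient-norm and spectral-seminorm conventions then coincide with those of \emph{loc.\ cit.} by construction.
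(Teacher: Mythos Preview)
Your proposal is correct and aligned with the paper: this lemma is stated in the paper without proof, simply as a direct recall of \cite[Lemma 3.4.4]{KL2} (note the attribution ``Kedlaya--Liu'' rather than ``After Kedlaya--Liu''), precisely because, as you observe, the coefficient ring $V$ plays no role here and the underlying adic tower is unchanged. Your three-stage sketch (trace splitting at finite level, passage to the colimit, uniform norm control via almost purity) is a faithful outline of the cited argument.
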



\indent Then we consider the corresponding deformation by the ring $V$ over $\mathbb{Q}_p$:

\begin{lemma}  \mbox{\bf{(After Kedlaya-Liu \cite[Lemma 3.4.4]{KL2})}}
Consider the ring $A$ as above which is furthermore assumed to be Fontaine perfectoid in the sense of \cite[Definition 3.3.1]{KL2}, and consider any direct system of faithfully finite \'etale morphisms as:
\[
\xymatrix@C+0pc@R+0pc{
W_{0,V} \ar[r] \ar[r] \ar[r] &W_{1,V} \ar[r] \ar[r] \ar[r] &W_{2,V}  \ar[r] \ar[r] \ar[r] &..., 
}  
\]	
where $W_0$ is just the corresponding base $W$. Then the corresponding completion of the infinite level could be decomposed as:
\begin{align}
W_{0,V}\oplus\widehat{\bigoplus}_{k=0}^\infty W_{k,V}/W_{k-1,V}.	
\end{align}
\end{lemma}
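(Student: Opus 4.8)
The plan is to deduce the statement from the non-deformed decomposition established immediately above (our analogue of Kedlaya--Liu \cite[Lemma 3.4.4]{KL2}) by applying the functor $-\,\widehat{\otimes}_{\mathbb{Q}_p}V$ throughout the tower. First I would record the evident compatibilities: one has $W_{k,V}=W_k\,\widehat{\otimes}_{\mathbb{Q}_p}V$ with $W_0=W$, and base change along $W\to W\,\widehat{\otimes}_{\mathbb{Q}_p}V$ carries a faithfully finite \'etale morphism to a faithfully finite \'etale morphism (finite \'etale is stable under base change, and finite locally free of positive rank stays faithfully flat). Hence $W_{0,V}\to W_{1,V}\to W_{2,V}\to\cdots$ is again a direct system of faithfully finite \'etale morphisms over the base $W_{0,V}$, so the infinite level $W_{\infty,V}:=\varinjlim_k W_{k,V}$ and its completion $\widehat{W_{\infty,V}}$ make sense, and we may equip the latter with the spectral seminorm and the quotients $W_{k,V}/W_{k-1,V}$ with the quotient norms exactly as before.

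The heart of the argument is the passage through a single finite stage. Since $W_{k-1}\to W_k$ is faithfully finite \'etale and the rings involved are $\mathbb{Q}_p$-algebras (so the degree, a positive locally constant function, is invertible), the structure map splits as a morphism of Banach $W_{k-1}$-modules, and the splitting $W_k\simeq W_{k-1}\oplus W_k/W_{k-1}$ is orthogonal for the spectral seminorm --- this is precisely the input of \cite[Lemma 3.4.4]{KL2}, via \cite[Corollary 3.3.20]{KL2}. Applying $-\,\widehat{\otimes}_{\mathbb{Q}_p}V$, which takes an orthogonal direct sum of Banach $\mathbb{Q}_p$-spaces to an orthogonal direct sum and commutes with the relevant completions, I would obtain an orthogonal splitting $W_{k,V}\simeq W_{k-1,V}\oplus\bigl((W_k/W_{k-1})\,\widehat{\otimes}_{\mathbb{Q}_p}V\bigr)$ together with a canonical isometric identification $(W_k/W_{k-1})\,\widehat{\otimes}_{\mathbb{Q}_p}V\cong W_{k,V}/W_{k-1,V}$ (here using that $W_{k-1,V}\hookrightarrow W_{k,V}$ is strict, which is immediate from the splitting). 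Telescoping these identifications over $k$ and then passing to the completed colimit yields
\[
\widehat{W_{\infty,V}}\;\simeq\;W_{0,V}\oplus\widehat{\bigoplus}_{k=0}^\infty W_{k,V}/W_{k-1,V},
\]
which is the assertion; and the decomposition is orthogonal for the spectral seminorm on the infinite level because, by the same reasoning as in \cite[Lemma 3.4.4]{KL2} (the underlying tower of adic spaces being unchanged, only the coefficients), the spectral seminorm of $\widehat{W_{\infty,V}}$ restricts on each $W_{k,V}$ to its own spectral seminorm.

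The main obstacle I anticipate is the norm bookkeeping in this last deduction, i.e.\ the compatibility of $-\,\widehat{\otimes}_{\mathbb{Q}_p}V$ with the relevant normalizations: one must check that the completed tensor-product norm on $(W_k/W_{k-1})\,\widehat{\otimes}_{\mathbb{Q}_p}V$ coincides with the quotient norm it inherits as an orthogonal summand of $W_{k,V}$ equipped with the spectral seminorm, and that these are uniformly controlled in $k$ so that the completed direct sum on the right-hand side is the intended one. Handling this cleanly does not require a new idea, but it does require the uniformity estimates underlying \cite[Corollary 3.3.20]{KL2} together with some care about how spectral seminorms interact with completed tensor products of Banach $\mathbb{Q}_p$-algebras; everything else is formal.
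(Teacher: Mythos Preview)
Your proposal is correct and follows exactly the approach the paper intends: the paper states this lemma immediately after the undeformed version with the transitional sentence ``Then we consider the corresponding deformation by the ring $V$ over $\mathbb{Q}_p$'' and gives no further proof, treating the statement as an immediate consequence of applying $-\,\widehat{\otimes}_{\mathbb{Q}_p}V$ to the decomposition of \cite[Lemma 3.4.4]{KL2}. Your write-up supplies precisely the details (stability of faithfully finite \'etale under base change, preservation of orthogonal splittings under completed tensor product, and the norm bookkeeping via \cite[Corollary 3.3.20]{KL2}) that the paper leaves implicit.
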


\

\begin{corollary}\mbox{\bf{(After Kedlaya-Liu \cite[Corollary 3.4.5]{KL2})}} 
Starting with an adic Banach ring which is as in \cite[Corollary 3.4.5]{KL2} assumed to be Fontaine perfectoid in the sense of \cite[Definition 3.3.1]{KL2}. And as in the previous two lemmas we consider an admissible infinite direct system:
\[
\xymatrix@C+0pc@R+0pc{
W_{0,V} \ar[r] \ar[r] \ar[r] &W_{1,V} \ar[r] \ar[r] \ar[r] &W_{2,V}  \ar[r] \ar[r] \ar[r] &...,   
}
\]
whose infinite level will be assumed to take the corresponding spectral seminorm as in the \cite[Corollary 3.4.5]{KL2}. Then carrying the corresponding coefficient $V$, we have in such situation the corresponding 2-pseudoflatness of the corresponding embedding map:
\begin{align}
W_{0,V}\rightarrow W_{\infty,V}.
\end{align}
Here $W_{\infty,V}$ denotes the corresponding completion of the limit $\varinjlim_{k\rightarrow\infty}W_{k,V}$.
\end{corollary}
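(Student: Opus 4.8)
The plan is to follow the strategy of Kedlaya-Liu's \cite[Corollary 3.4.5]{KL2}, carrying the big coefficient ring $V$ through the argument verbatim wherever the underlying adic space is untouched. The key point is that 2-pseudoflatness of $W_{0,V}\to W_{\infty,V}$ means: (i) the map is flat in the derived/2-categorical sense needed for pseudocoherent descent, and (ii) for every finitely generated (indeed pseudocoherent) module $M$ over $W_{0,V}$, base change commutes with the relevant cohomology, i.e. $\mathrm{Tor}^{W_{0,V}}_i(M, W_{\infty,V}) = 0$ for $i>0$ and the natural map $M \to M\widehat{\otimes}_{W_{0,V}} W_{\infty,V}$ is injective with the expected behaviour. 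The engine is the decomposition from the preceding lemma:
\[
W_{\infty,V} \;\cong\; W_{0,V}\oplus\widehat{\bigoplus}_{k=0}^\infty W_{k,V}/W_{k-1,V},
\]
which exhibits $W_{\infty,V}$ as $W_{0,V}$ plus a topologically orthogonal complement, so that $W_{0,V}\to W_{\infty,V}$ is split in a suitable sense at the level of the ambient Banach modules.

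First I would establish that each finite stage $W_{0,V}\to W_{k,V}$ is 2-pseudoflat. Since $W_0\to W_k$ is faithfully finite étale, $W_k$ is a finite projective $W_0$-module, and completing the tensor product with $V$ over $\mathbb{Q}_p$ preserves this: $W_{k,V} = W_k\widehat{\otimes}_{\mathbb{Q}_p} V$ is finite projective over $W_{0,V} = W_0\widehat{\otimes}_{\mathbb{Q}_p}V$ because finite projectivity is stable under the (completed) base change along $\mathbb{Q}_p\to V$ and along $W_0\to W_0\widehat{\otimes}_{\mathbb{Q}_p}V$. Finite projective maps are automatically 2-pseudoflat (indeed the strongest form of flatness needed in the \cite{KL1,KL2} formalism). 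Second, I would pass to the colimit: 2-pseudoflatness is preserved under filtered colimits of rings along 2-pseudoflat transition maps, and then I must control the completion. This is exactly where the orthogonal decomposition above enters — it guarantees that the completed colimit $W_{\infty,V}$ does not destroy flatness, because for a pseudocoherent module $M$ over $W_{0,V}$ one computes $M\widehat{\otimes}_{W_{0,V}}W_{\infty,V}$ as $M \oplus \widehat{\bigoplus}_k M\widehat{\otimes}_{W_{0,V}}(W_{k,V}/W_{k-1,V})$, and each summand is exact in $M$ since $W_{k,V}$ is finite projective and the quotients are direct summands; the completed direct sum of exact functors with compatible norm estimates remains exact, using that the spectral seminorm on the infinite level and the quotient norms on the pieces are the ones fixed in the previous lemma.

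The main obstacle I anticipate is precisely the interchange of the completion with $\mathrm{Tor}$ / with the formation of a (possibly infinite) projective resolution of $M$: a priori, $\widehat{\bigoplus}$ is not exact on arbitrary Banach modules, so one needs the strictness/openness of the maps in a finite free resolution of $M$ together with uniform norm bounds, so that applying $\widehat{\bigoplus}_k (-)\widehat{\otimes}_{W_{0,V}}(W_{k,V}/W_{k-1,V})$ term by term preserves strict exactness. This is handled in \cite{KL2} via the admissibility hypothesis on the direct system (which I would invoke, since it is part of the statement) and the spectral-seminorm normalization; one checks that the transition maps $W_{k,V}/W_{k-1,V}$ have norm bounds ensuring the completed sum of the resolution is again a resolution. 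The second, more routine, obstacle is bookkeeping the coefficient ring $V$: one must verify that $\widehat{\otimes}_{\mathbb{Q}_p}V$ commutes with the decomposition and with finite projective base change, which follows because $V$ is flat (it is a Banach ring over a field) and completed tensor product with a fixed Banach space distributes over completed direct sums of uniformly bounded families. Once these are in place, the conclusion that $W_{0,V}\to W_{\infty,V}$ is 2-pseudoflat follows exactly as in \cite[Corollary 3.4.5]{KL2}.
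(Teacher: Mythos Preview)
Your proposal is correct and follows exactly the approach the paper takes: the paper's proof consists solely of the citation ``See \cite[Corollary 3.4.5]{KL2}'', and what you have written is a faithful unpacking of that reference with the coefficient $V$ carried along, using the orthogonal decomposition of the previous lemma precisely as Kedlaya--Liu do. Your identification of the two potential obstacles (interchange of completion with $\mathrm{Tor}$, and compatibility of $\widehat{\otimes}_{\mathbb{Q}_p}V$ with the decomposition) and their resolution via admissibility and the spectral-seminorm normalization is accurate and matches the cited argument.
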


\begin{proof}
See \cite[Corollary 3.4.5]{KL2}.	
\end{proof}

\indent Now recall that from \cite{KL2} since we do not have to modify the corresponding underlying spatial context, so we will also only have to consider the corresponding stability with respect to \'etale topology even although we are considering the corresponding profinite \'etale site in our current section. We first generalize the corresponding Tate acyclicity in \cite{KL2} to the corresponding $V$-relative situation in our current situation:


\begin{remark}
In the following we only consider the corresponding perfectoid ring $W$.	
\end{remark}

\begin{proposition}\mbox{\bf{(After Kedlaya-Liu \cite[Theorem 3.4.6]{KL2})}} \label{proposition2.7} 
Now we consider the corresponding pro-\'etale site of $X$ which we denote it by $X_\text{p\'et}$. We assume we have a stable basis $\mathbb{H}$ of the corresponding perfectoid subdomains for this profinite \'etale site where each morphism therein will be \'etale pseudoflat. We consider as in \cite[Theorem 3.4.6]{KL2} a corresponding module over $W\widehat{\otimes}V$ which is assumed to be \'etale-stably pseudocoherent. Then consider the corresponding presheaf $\widetilde{G}$ attached to this \'etale-stably pseudocoherent module with respect to in our situation the corresponding chosen well-defined basis $\mathbb{H}$, we have that the corresponding sheaf over some element $Y\in \mathbb{H}$ (that is to say over $\mathcal{O}_{Y_\text{p\'et}}\widehat{\otimes}V$) is acyclic and is acyclic with respect to some \v{C}eck covering coming from elements in $\mathbb{H}$. 
\end{proposition}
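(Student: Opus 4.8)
The plan is to deduce the pro-\'etale statement from the finite-level \'etale glueing of \'etale-stably pseudocoherent modules (available in the fashion of \cite{TX2} and modeled on \cite[Theorem 3.4.6]{KL2}), combined with the decomposition and pseudoflatness inputs recorded above. First I would reduce any covering of $Y\in\mathbb{H}$ in $X_\text{p\'et}$ to a tower of faithfully finite \'etale maps $W_{0,V}\to W_{1,V}\to W_{2,V}\to\cdots$ as in the previous two lemmas, with $W_{0,V}$ the $V$-deformation of the coordinate ring of $Y$ and with the spectral seminorm on the completion $W_{\infty,V}$ of $\varinjlim_k W_{k,V}$; this is legitimate since over perfectoid rings the pro-\'etale site is generated by such towers up to rational localizations, the latter being absorbed by the already-established Tate acyclicity.

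Second, the base case is the structure sheaf, for which the Lemma after \cite[Proposition 3.4.3]{KL2} gives $H^i(Y_\text{p\'et},\mathcal{O}\widehat{\otimes}V)=0$ for $i>0$ and $H^0(Y_\text{p\'et},\mathcal{O}\widehat{\otimes}V)\cong W_{0,V}$. Third, using the direct-sum decomposition
\[
W_{\infty,V}\;\cong\;W_{0,V}\oplus\widehat{\bigoplus}_{k=0}^{\infty}W_{k,V}/W_{k-1,V}
\]
and the $2$-pseudoflatness of $W_{0,V}\to W_{\infty,V}$ from the Corollary, I would base change the \'etale-stably pseudocoherent module over $W\widehat{\otimes}V$ along each level of the tower, obtaining a compatible family of pseudocoherent modules over $W_{k,V}$ and over $W_{\infty,V}$. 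The \v{C}ech complex of $\widetilde{G}$ at each finite level is then strictly exact in positive degrees by the \'etale glueing hypothesis, and the decomposition above, being $V$-linear and compatible with the seminorms, transports this strict exactness to the completed colimit, exactly in the manner of \cite[Lemma 3.4.4, Corollary 3.4.5, Theorem 3.4.6]{KL2}. A final Cartan--Leray and cofinality argument along the stable basis $\mathbb{H}$, whose transition maps are \'etale pseudoflat, then converts the vanishing of these \v{C}ech cohomologies into the asserted acyclicity of the sheaf attached to $\widetilde{G}$ over $Y$ and its acyclicity for coverings drawn from $\mathbb{H}$.

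The main obstacle is the passage to the completed limit: one must verify that the strict exactness of the \v{C}ech complexes at each finite level survives completion, and this is precisely where the spectral-seminorm estimates underlying \cite[Lemma 3.4.4, Corollary 3.4.5]{KL2} are indispensable, namely that the successive quotients $W_{k,V}/W_{k-1,V}$ sit almost orthogonally inside $W_{\infty,V}$ so that the norms on consecutive levels stay uniformly comparable. A secondary subtlety is that only $2$-pseudoflatness, not flatness, of $W_{0,V}\to W_{\infty,V}$ is available, so the base-change compatibilities for the module must be phrased at the level of pseudocoherent complexes as in \cite{KL2}; but since the noncommutative coefficient $V$ is a fixed Banach $\mathbb{Q}_p$-algebra and enters only through the $V$-linear completed tensor product, once this bookkeeping is in place the argument follows the unrelative case in \cite[Theorem 3.4.6]{KL2} essentially verbatim.
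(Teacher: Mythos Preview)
Your proposal is correct and follows essentially the same route as the paper: both reduce to towers of faithfully finite \'etale maps over a perfectoid base, invoke the direct-sum decomposition and $2$-pseudoflatness of $W_{0,V}\to W_{\infty,V}$, and appeal to the finite-level \'etale acyclicity (the paper cites \cite[Theorem 2.12]{TX2} here) to conclude as in \cite[Theorem 3.4.6]{KL2}. The one point of presentation where the paper differs is that, for the passage from the structure-sheaf case to a general \'etale-stably pseudocoherent $G$, it explicitly chooses a short exact sequence $0\to G''\to G'\to G\to 0$ with $G'$ finite projective and applies the five lemma to the diagram comparing $G_{V_Y}$ with $\Gamma(Y_{\text{\'et}},\widetilde{G})$; your argument absorbs this d\'evissage into the ``base-change compatibilities for pseudocoherent complexes'' remark, which is fine but worth making explicit.
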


\begin{proof}
The corresponding proof could be made as in the corresponding nonrelative setting as in \cite[Theorem 3.4.6]{KL2}. As in \cite[Theorem 3.4.6]{KL2} the corresponding first step is to check the statement is true with respect to some specific basis consisting of all the perfectoid subdomains which are faithfully finite \'etale over some specific perfectoid subdomain $Y$ (along towers). Then the corresponding statement could be reduced to those with respect to covering which is simple Laurent rational and the corresponding covering in the sense of the previous corollary. Then actually in this first scope of consideration, the statement holds by previous corollary and \cite[Theorem 2.12]{TX2}. Then in general for some general $Y$ we consider the corresponding covering of $Y$ consisting of elements satisfying the previous situation, then apply the previous argument to the corresponding basis in current situation satisfying the previous situation Then we consider the corresponding sheafification of the following chosen short exact sequence:
\[
\xymatrix@C+0pc@R+0pc{
0 \ar[r] \ar[r] \ar[r] &G'' \ar[r] \ar[r] \ar[r] &G'  \ar[r] \ar[r] \ar[r] &G \ar[r] \ar[r] \ar[r] &0 ,   
}
\]
with $G'$ finite projective. Then the idea is to consider the corresponding diagram factoring through the corresponding pro-\'etale sheafification of this short exact sequence down to the corresponding \'etale site. Then the five lemma will finish the corresponding proof as one considers the following commutative diagram:
\[
\xymatrix@C+0pc@R+3pc{
&G''_{V_Y} \ar[r] \ar[r] \ar[r] \ar[d] \ar[d] \ar[d] &G'_{V_Y}  \ar[r] \ar[r] \ar[r] \ar[d] \ar[d] \ar[d] &G_{V_Y} \ar[r] \ar[r] \ar[r] \ar[d] \ar[d] \ar[d] &0 ,  \\
0 \ar[r] \ar[r] \ar[r] &\Gamma(Y_\text{\'et},\widetilde{G''}) \ar[r] \ar[r] \ar[r] &\Gamma(Y_\text{\'et},\widetilde{G'})  \ar[r] \ar[r] \ar[r] &\Gamma(Y_\text{\'et},\widetilde{G}) \ar[r] \ar[r] \ar[r] &0.  \\ 
}
\]
Here the corresponding ring $V_Y$ is the adic Banach ring such that $Y=\mathrm{Spa}(V_Y,V_Y^+)$.
\end{proof}

\begin{definition}\mbox{\bf{(After Kedlaya-Liu \cite[Definition 3.4.7]{KL2})}} 
Consider the pro-\'etale site of $X$ attached to the adic Banach ring $(W,W^+)$. We will define a sheaf of module $G$ over $\widehat{\mathcal{O}}_\text{p\'et}\widehat{\otimes}V$ to be $V$-pseudocoherent if locally we can define this as a sheaf attached to a $V$-\'etale-stably pseudocoherent module. As in \cite[Definition 3.4.7]{KL2}, we do not have to consider the corresponding notion of $V$-profinite-\'etale-stably pseudocoherent module.	
\end{definition}

\begin{proposition}\mbox{\bf{(After Kedlaya-Liu \cite[Theorem 3.4.8]{KL2})}} \label{proposition2.10}
Taking the corresponding global section will realize the corresponding equivalence between the following two categories. The first one is the corresponding one of all the $V$-pseudocoherent sheaves over $\widehat{\mathcal{O}}_\text{p\'et}\widehat{\otimes}V$, while the second one is the corresponding one of all the $V$-\'etale-stably pseudocoherent modules over $W\widehat{\otimes}V$.
	
\end{proposition}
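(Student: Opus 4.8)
The plan is to realize $\Gamma(X_\text{p\'et},-)$ as one half of an equivalence whose quasi-inverse is the sheafification functor $M\mapsto\widetilde{M}$ sending a $V$-\'etale-stably pseudocoherent module over $W\widehat{\otimes}V$ to the sheafification on $X_\text{p\'et}$ of the presheaf $Y\mapsto M\,\widehat{\otimes}_{W\widehat{\otimes}V}\,(\mathcal{O}_{Y_\text{p\'et}}\widehat{\otimes}V)$. By \cref{proposition2.7} this presheaf is already a sheaf on the stable basis $\mathbb{H}$, with vanishing higher \v{C}ech cohomology for coverings drawn from $\mathbb{H}$; hence $\widetilde{M}$ is a well-defined $V$-pseudocoherent sheaf, it is acyclic on each $Y\in\mathbb{H}$, and the construction carries short exact sequences of modules (in particular a presentation of $M$ by finite projectives) to short exact sequences of sheaves.

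First I would check that $\Gamma(X_\text{p\'et},\widetilde{M})\cong M$ naturally in $M$. For $M$ finite free this is exactly the identification $H^0(\mathrm{Spa}(W,W^+)_\text{p\'et},\mathcal{O}\widehat{\otimes}V)\cong W\widehat{\otimes}V$ from the lemma after \cite[Proposition 3.4.3]{KL2}, applied in each coordinate; for $M$ finite projective it follows by writing $M$ as a direct summand of a finite free module and using additivity of global sections. For a general $V$-\'etale-stably pseudocoherent $M$ I would take the exact sequence $0\to M''\to M'\to M\to 0$ with $M'$ finite projective (as used in the proof of \cref{proposition2.7}), apply $\widetilde{(-)}$, and use the vanishing of the higher cohomology furnished by that same lemma together with \cref{proposition2.7} to see that $\Gamma(X_\text{p\'et},-)$ stays exact on this sequence; a short induction, or a five-lemma comparison against the already-known projective case, then identifies $\Gamma(X_\text{p\'et},\widetilde{M})$ with $M$.

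Conversely, for a $V$-pseudocoherent sheaf $G$ on $X_\text{p\'et}$ I would show $\widetilde{\Gamma(X_\text{p\'et},G)}\cong G$. By definition $G$ is locally, on a pro-\'etale covering, associated to a $V$-\'etale-stably pseudocoherent module; refining, one may take the covering to consist of elements $Y\in\mathbb{H}$, each reached from the base by a tower of faithfully finite \'etale maps as in the lemmas preceding the corollary after \cite[Corollary 3.4.5]{KL2}. On each such $Y$, \cref{proposition2.7} gives that $\Gamma(Y_\text{p\'et},G)=:M_Y$ is $V$-\'etale-stably pseudocoherent over $V_Y\widehat{\otimes}V$, that $G|_{Y_\text{p\'et}}=\widetilde{M_Y}$, and that $G$ is acyclic on $Y$. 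Setting $M:=\Gamma(X_\text{p\'et},G)$, the acyclicity and the \v{C}ech description force $M\,\widehat{\otimes}_{W\widehat{\otimes}V}\,(V_Y\widehat{\otimes}V)\cong M_Y$ compatibly along the tower, so $\widetilde{M}$ agrees with $G$ on the basis $\mathbb{H}$ and therefore as sheaves on $X_\text{p\'et}$; it then remains only to verify that $M$ is itself $V$-\'etale-stably pseudocoherent over $W\widehat{\otimes}V$.

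That last verification is the main obstacle, and it is a two-layer descent. The first layer is descent of pseudocoherence for $M$ from $M\,\widehat{\otimes}_{W\widehat{\otimes}V}\,W_{\infty,V}$ (equivalently from a finite stage $W_{k,V}$) down to $W\widehat{\otimes}V$ along the faithfully finite \'etale tower; here the $2$-pseudoflatness of $W_{0,V}\to W_{\infty,V}$ recorded in the corollary after \cite[Corollary 3.4.5]{KL2} is what makes the argument of \cite[Theorem 3.4.8]{KL2} go through in our relative setting. The second layer is the \emph{stable} part, namely that pseudocoherence of $M$ persists after any further rational localization or \'etale base change; this is supplied by the $V$-relative Tate acyclicity of \cref{proposition2.7} together with the \'etale descent already established in \cite[Theorem 2.12]{TX2}. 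Everything involving the completed tensor product with the big --- possibly noncommutative --- coefficient ring $V$ is handled exactly as in our earlier work; since the underlying adic-space context is unchanged from \cite{KL2}, the combinatorics of the simple Laurent rational coverings and the faithfully finite \'etale towers, and the seminorm bookkeeping, transfer verbatim, and the only genuine task is to replace each base change in the \cite{KL2} proof by its $\widehat{\otimes}V$-linear analogue while preserving exactness.
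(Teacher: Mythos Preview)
Your argument is correct and uses the same essential ingredients as the paper --- \cref{proposition2.7} for acyclicity on the basis $\mathbb{H}$, the $2$-pseudoflatness corollary for the faithfully finite \'etale tower, and the \'etale descent of \cite[Theorem 2.12]{TX2} --- but the packaging differs. The paper's proof is a one-line invocation of the abstract glueing machinery \cite[Proposition 9.2.6]{KL1}, observing (as in \cite[Theorem 3.4.8]{KL2}) that its axioms are satisfied in the $V$-relative setting; you instead unpack that machinery by hand, explicitly constructing the quasi-inverse $M\mapsto\widetilde{M}$, checking both unit and counit, and isolating the ``two-layer descent'' needed to see that $\Gamma(X_\text{p\'et},G)$ is $V$-\'etale-stably pseudocoherent. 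Your route has the advantage of making visible exactly where each prior result is used (in particular, why no pro-\'etale stability condition is needed beyond the \'etale one), while the paper's route is shorter and highlights that nothing new happens beyond verifying the hypotheses of a known black box.
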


\begin{proof}
As in \cite[Theorem 3.4.8]{KL2}, the corresponding \cite[Proposition 9.2.6]{KL1} applies in the way that the corresponding conditions of \cite[Proposition 9.2.6]{KL1} hold in our current situation.	
\end{proof}

\indent Obviously we have the following analog of \cite[Corollary 3.4.9]{KL2}:


\begin{corollary} \mbox{\bf{(After Kedlaya-Liu \cite[Corollary 3.4.9]{KL2})}}
The following two categories are equivalent. The first is the corresponding category of all $V$-pseudocoherent sheaves over $\widehat{\mathcal{O}}_\text{p\'et}\widehat{\otimes}V$. The second is the corresponding category of all $V$-pseudocoherent sheaves over ${\mathcal{O}}_\text{\'et}\widehat{\otimes}V$. The corresponding functor is the corresponding pullback along the corresponding morphism of sites $X_{\text{p\'et}}\rightarrow X_{\text{\'et}}$. 	
\end{corollary}

\
%
%
%


\section{Descent over Analytic Huber Rings in the Integral Setting} \label{chapter3}

\subsection{Noncommutative Topological Pseudocoherence over \'Etale Topology}

\indent Now we consider the corresponding discussion of the corresponding glueing deformed pseudocoherent sheaves over \'etale topology which generalize the corresponding discussion in \cite{Ked1}.  And we will consider the corresponding situation as assumed in the following setting:

\begin{setting}
Let $(W,W^+)$ be analytic Huber uniform pair which is defined over $\mathbb{Z}_p$. And we assume that the ring $V$ over $\mathbb{Z}_p$ is a topological ring (complete) and splitting over the $\mathbb{Z}_p$. We now fix a corresponding stable basis $\mathbb{H}$ for the corresponding \'etale site of the adic space $\mathrm{Spa}(W,W^+)$, locally consisting of the corresponding compositions of rational localizations and the corresponding finite \'etale morphisms. And as in \cite[Hypothesis 1.10.3]{Ked1} we need to assume that the corresponding basis is made up of the corresponding adic spectrum of sheafy rings. Assume the corresponding sheafiness of the Huber ring $W$.
\end{setting}

\begin{definition} \mbox{\bf{(After Kedlaya-Liu \cite[Definition 2.5.9]{KL2})}}
We define a $V$-\'etale stably pseudocoherent module over the corresponding Huber ring $W$ with respect to the corresponding basis $\mathbb{H}$ chosen above for the corresponding \'etale site of the analytic adic space $X$. We define a module over $W\widehat{\otimes}V$ is a $V$-\'etale stably pseudocoherent module if it is algebraically pseudocoherent (namely formed by the corresponding possibly infinite length resolution of finitely generated and projective modules) and at least complete with respect to the corresponding natural topology and also required to be also complete with respect to the natural topology along some base change with respect to any morphism in $\mathbb{H}$.
\end{definition}

\begin{definition} \mbox{\bf{(After Kedlaya-Liu \cite[Definition 2.5.9]{KL2})}}
Along the previous definition we have the corresponding notion of $V$-\'etale-pseudoflat left (or right respectively) modules with respect to the corresponding chosen basis $\mathbb{H}$. A such module is defined to be a topological module $G$ over $W\widehat{\otimes}V$ complete with respect to the natural topology and for any right (or left respectively) $V$-\'etale stably pseudocoherent module, they will jointly give the $\mathrm{Tor}_1$ vanishing.
\end{definition}

\begin{lemma}\mbox{\bf{(After Kedlaya-Liu \cite[Lemma 2.5.10]{KL2})}}
One could find another basis $\mathbb{H}'$ which is in our situation contained in the corresponding original basis $\mathbb{H}$ such that any morphism in $\mathbb{H}$ could be $V$-\'etale-pseudoflat with respect $\mathbb{H}'$ or just $\mathbb{H}$ itself.	
\end{lemma}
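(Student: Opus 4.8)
The plan is to reproduce the non-relative argument of Kedlaya--Liu \cite[Lemma 2.5.10]{KL2} and then propagate its conclusion along the coefficient ring $V$. First I would reduce the statement to the two elementary classes of morphism out of which $\mathbb{H}$ is locally built, namely rational localizations and finite \'etale morphisms. This reduction rests on the fact that $V$-\'etale-pseudoflatness is stable under composition: for a tower $W\to W'\to W''$ the base-change spectral sequence expresses $\mathrm{Tor}_1^{W\widehat{\otimes}V}(W''\widehat{\otimes}V,N)$ in terms of $\mathrm{Tor}_1^{W\widehat{\otimes}V}(W'\widehat{\otimes}V,N)$ and $\mathrm{Tor}_1^{W'\widehat{\otimes}V}(W''\widehat{\otimes}V,W'\widehat{\otimes}_W N)$, and the second term both makes sense and vanishes precisely because $W'\widehat{\otimes}_W N$ is again $V$-\'etale stably pseudocoherent over $W'$ by the stability built into our definition. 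So it suffices to produce, for each elementary morphism, a refinement inside $\mathbb{H}$ by a simple rational covering whose members are pseudoflat --- and for some morphisms the refinement can be taken trivial, i.e.\ $\mathbb{H}$ itself already works.

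For a finite \'etale morphism $W\to W'$ no refinement is needed: $W'$ is finite projective over $W$, hence already complete for the natural topology, and after forming $-\widehat{\otimes}_{\mathbb{Z}_p}V$ the splitting hypothesis on $V$ keeps $W'\widehat{\otimes}V$ finite projective over $W\widehat{\otimes}V$, so $\mathrm{Tor}_1^{W\widehat{\otimes}V}(W'\widehat{\otimes}V,N)$ vanishes outright for every $V$-\'etale stably pseudocoherent $N$. For a rational localization $W\to W'$ I would first pass to a cofinal subbasis $\mathbb{H}'\subseteq\mathbb{H}$ in which the rational subdomains are cut out by a single function (together with their iterated forms), so that, by the non-relative statement of \cite[Lemma 2.5.10]{KL2}, $W'$ is obtained from $W$ by a tower of such simple localizations each of which is pseudoflat over $W$ in the completed sense. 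What then remains is to carry this pseudoflatness across the base change $\mathbb{Z}_p\to V$.

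That base change step is where the real work lies. Writing a $V$-\'etale stably pseudocoherent module as the completion of an algebraically pseudocoherent $M$ with a resolution $\cdots\to P_1\to P_0\to M\to 0$ by finite projective $W\widehat{\otimes}V$-modules, I would apply the simple-rational pseudoflatness of \cite[Lemma 2.5.10]{KL2} term by term, tensor with $V$, and check that completing the resulting complex still computes the completed base change of $M$ with vanishing $\mathrm{Tor}_1$. The crucial point is that the completeness conditions imposed on $V$-\'etale stably pseudocoherent modules --- completeness for the natural topology and along every base change in $\mathbb{H}$ --- are exactly what allows $-\widehat{\otimes}V$ to commute past the relevant limits, given that $V$ is complete and splits over $\mathbb{Z}_p$ so that $-\widehat{\otimes}_{\mathbb{Z}_p}V$ preserves strictness of the exact sequences at hand. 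I expect this interchange of completion, the coefficient tensor, and the homology of the resolution to be the main obstacle; once it is in place, a diagram chase (or the five lemma) over the two-step factorization yields $V$-\'etale-pseudoflatness of the original morphism with respect to $\mathbb{H}'$, hence \emph{a fortiori} with respect to $\mathbb{H}$ itself.
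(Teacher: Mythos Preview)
Your overall strategy matches the paper's: reduce to the elementary building blocks (rational localizations and finite \'etale maps), handle those, and then reassemble. The paper's proof differs from yours mainly in what it takes as input. Where you identify ``the interchange of completion, the coefficient tensor, and the homology of the resolution'' as the main obstacle and propose to work it out by hand, the paper simply cites \cite[Theorem 2.12]{TX2}, which already establishes $V$-relative pseudoflatness of rational localizations (and hence of their compositions with finite \'etale maps) over the analytic topology. So your ``real work'' paragraph is not wrong, but it is reproving a result the author has on the shelf; you could replace that entire step with a citation. For the passage from elementary morphisms back to a general morphism in $\mathbb{H}$, the paper does not rely on the original Tate-setting argument of \cite[Lemma 2.5.10]{KL2} but instead invokes the basis-spreading lemma \cite[Lemma 1.10.4]{Ked1}, which is the appropriate substitute in the analytic Huber context of this section; your proposal glosses over this point by appealing directly to \cite[Lemma 2.5.10]{KL2}, which lives in a slightly different setting.
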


\begin{proof}
The derivation of such new basis is actually along the same way as in \cite[Lemma 2.5.10]{KL2} since the corresponding compositions of rational localizations and the finite \'etales are actually satisfying the corresponding conditions in the statement by \cite[Theorem 2.12]{TX2} over the analytic topology. In general one just show any general morphism will also decompose in the same way, which one can proves certainly as in \cite[Lemma 2.5.10]{KL2}, where one instead consider in the current context the corresponding basis spreading result in \cite[Lemma 1.10.4]{Ked1}. 
\end{proof}


\begin{proposition} \mbox{\bf{(After Kedlaya-Liu \cite[Theorem 2.5.11]{KL2})}}
Consider the site $X_\text{\'et}$ and consider the basis $\mathbb{H}$. Take any $V$-\'etale stably pseudocoherent module $M$ over $W\widehat{\otimes}V$. Consider the corresponding presheaf by taking the inverse limit throughout all the corresponding base change along morphisms in $\mathbb{H}$. Then we have the corresponding acyclicity of the presheaf over any element in $\mathbb{H}$ and any covering of this element by elements in $\mathbb{H}$.	
\end{proposition}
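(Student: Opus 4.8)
The plan is to follow the template of \cref{proposition2.7} and of Kedlaya--Liu \cite[Theorem 2.5.11]{KL2}: first establish the asserted acyclicity (including the sheaf property) for finite projective modules, then perform a dévissage to the general $V$-\'etale stably pseudocoherent case. Throughout, the coefficient ring $V$ intervenes only through the flat, completeness-preserving base change $(-)\widehat{\otimes}V$, so at each stage the relevant nonrelative argument should transport with only bookkeeping changes.

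First I would reduce to a convenient basis. By the preceding lemma (after \cite[Lemma 2.5.10]{KL2}) one may replace $\mathbb{H}$ by a sub-basis $\mathbb{H}'$ with respect to which every morphism appearing in $\mathbb{H}$ is $V$-\'etale-pseudoflat, and by the basis-spreading result \cite[Lemma 1.10.4]{Ked1} one may further arrange that, over a fixed $Y\in\mathbb{H}$, each element of $\mathbb{H}'$ is obtained by composing rational localizations and faithfully finite \'etale morphisms along towers. This reduces the verification of acyclicity of the presheaf $\widetilde{M}$ to two model situations: a simple Laurent (rational) covering, and a single faithfully finite \'etale morphism.

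Next I would settle the model cases for $M$ finite projective over $W\widehat{\otimes}V$. For rational coverings the needed statement is precisely the $V$-relative Tate acyclicity, available from \cite[Theorem 2.12]{TX2} over the analytic topology; for a faithfully finite \'etale morphism one invokes faithfully flat descent exactly as in the integral analog used in \cite[Theorem 2.5.11]{KL2}. Patching these along the tower structure of $\mathbb{H}'$ gives acyclicity of $\widetilde{M}$ over $Y$ and over any covering of $Y$ by elements of $\mathbb{H}$ whenever $M$ is finite projective. Finally, for a general $V$-\'etale stably pseudocoherent $M$ I would choose a resolution $\cdots\to F_1\to F_0\to M\to 0$ by finite projective $W\widehat{\otimes}V$-modules and set $M_0=M$, $M_{k+1}=\ker(F_k\to M_k)$, each $M_{k+1}$ again pseudocoherent, with short exact sequences $0\to M_{k+1}\to F_k\to M_k\to 0$ that remain exact after the completed base changes defining $\widetilde{\,\cdot\,}$ because the basis morphisms are $V$-\'etale-pseudoflat. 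The long exact sequence in \v{C}ech cohomology for a finite covering, together with the vanishing of $H^{>0}(\widetilde{F_k})$ from the finite-projective case, yields $H^i(\widetilde{M})\cong H^{i+1}(\widetilde{M_1})\cong\cdots\cong H^{i+k}(\widetilde{M_k})$ for all $i\geq 1$; since the \v{C}ech complex of a finite covering has bounded cohomological dimension, the right-hand side vanishes for $k$ large, forcing $H^i(\widetilde{M})=0$ for all $i\geq 1$ and exactness at the first two spots of the \v{C}ech complex.

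I expect the main obstacle to be the topological bookkeeping specific to the integral, non-Tate setting: one must check that the completed base changes $M\widehat{\otimes}(\mathcal{O}(U)\widehat{\otimes}V)$ stay pseudocoherent and complete for the natural topology, that the $V$-\'etale-pseudoflatness of the chosen basis morphisms genuinely keeps the syzygy sequences exact after completion, and that the inverse limit defining the presheaf commutes with the finite \v{C}ech complexes involved. This is precisely where the sheafiness hypothesis \cite[Hypothesis 1.10.3]{Ked1} and the pseudoflat sub-basis lemma are doing the work, and each point should be verified rather than invoked blindly; the degree-shifting dévissage itself is robust and not the difficult part.
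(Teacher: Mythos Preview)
Your proposal is correct and follows essentially the same approach as the paper: the paper's proof is the single line ``As in \cite[Theorem 2.5.11]{KL2} apply the corresponding \cite[Proposition 8.2.21]{KL1}'', and what you have written is precisely an unpacking of that citation---reduce to a pseudoflat sub-basis via the preceding lemma and \cite[Lemma 1.10.4]{Ked1}, verify the finite projective case on the model coverings (rational via \cite[Theorem 2.12]{TX2}, finite \'etale via faithfully flat descent), then run the standard syzygy d\'evissage. Your more careful accounting of the topological bookkeeping in the integral setting is a welcome elaboration rather than a departure.
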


\begin{proof}
As in \cite[Theorem 2.5.11]{KL2} apply the corresponding \cite[Proposition 8.2.21]{KL1}.	
\end{proof}

\begin{proposition}\mbox{\bf{(After Kedlaya-Liu \cite[Lemma 2.5.13]{KL2})}}
The corresponding glueing of $V$-\'etale-stably pseudocoherent modules holds in this current situation over $W$ along binary morphisms (namely along binary rational decomposition). 	
\end{proposition}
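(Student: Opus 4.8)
The plan is to follow the template of Kedlaya--Liu \cite[Lemma 2.5.13]{KL2}, now in the $V$-deformed topological setting. Fix a binary rational decomposition of $X=\mathrm{Spa}(W,W^+)$, i.e. rational subdomains giving morphisms $W\widehat{\otimes}V\to W_1\widehat{\otimes}V$, $W\widehat{\otimes}V\to W_2\widehat{\otimes}V$, $W\widehat{\otimes}V\to W_{12}\widehat{\otimes}V$ whose target rings lie in the basis (after passing to the refined basis $\mathbb{H}'$ produced by the preceding lemma, so that all maps in sight are $V$-\'etale-pseudoflat). A glueing datum is then a pair of $V$-\'etale-stably pseudocoherent modules $M_1$ over $W_1\widehat{\otimes}V$ and $M_2$ over $W_2\widehat{\otimes}V$ together with an isomorphism $\iota\colon M_1\widehat{\otimes}_{W_1\widehat{\otimes}V}(W_{12}\widehat{\otimes}V)\xrightarrow{\ \sim\ }M_2\widehat{\otimes}_{W_2\widehat{\otimes}V}(W_{12}\widehat{\otimes}V)$, the cocycle condition being vacuous for a binary cover. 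I would take the candidate glued module to be the equalizer
\[
M:=\ker\Bigl(M_1\oplus M_2\longrightarrow M_1\widehat{\otimes}_{W_1\widehat{\otimes}V}(W_{12}\widehat{\otimes}V)\Bigr),
\]
with map $(x,y)\mapsto\iota(x\otimes 1)-y\otimes 1$, and then show that $M$ is $V$-\'etale-stably pseudocoherent over $W\widehat{\otimes}V$ and that its base changes recover $M_1$, $M_2$ compatibly with $\iota$.

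The verification proceeds by d\'evissage as in the nonrelative case. First, when $M_1,M_2$ are finite projective the statement is the $V$-relative binary glueing of finite projective modules, which follows from \cite[Theorem 2.12]{TX2} over the analytic topology together with the $\mathrm{Tor}_1$-vanishing built into the notion of $V$-\'etale-pseudoflatness and the acyclicity established in the preceding proposition, which together make the equalizer exact and complete. In general, choose short exact sequences $0\to N_i\to F_i\to M_i\to 0$ over $W_i\widehat{\otimes}V$ with $F_i$ finite projective and $N_i$ again $V$-\'etale-stably pseudocoherent; using $\iota$ and the projectivity of the $F_i$, after refining one arranges these resolutions to be compatible over $W_{12}\widehat{\otimes}V$, so they assemble into a short exact sequence of glueing data. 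Glue the $F_i$ (finite projective case), and, by induction on the length of a finite-projective resolution together with a direct-limit argument for the infinite-length case, glue the $N_i$ to $N$; the five lemma applied to the map from the exact sequence $0\to N\to F\to M\to 0$ of glued modules, base changed to $W_1\widehat{\otimes}V$ (resp. $W_2\widehat{\otimes}V$), onto $0\to N_1\to F_1\to M_1\to 0$ (resp. the analogue over $W_2\widehat{\otimes}V$) then shows that $M$ base changes correctly. The acyclicity proposition is what guarantees the relevant $\Gamma$-functors are exact, so the d\'evissage loses no information.

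The main obstacle is the last, bookkeeping-heavy point: checking that the glued module $M$ is genuinely \emph{stably} pseudocoherent, i.e. that it is algebraically pseudocoherent over $W\widehat{\otimes}V$, complete for its natural topology, and \emph{remains} complete after base change along every morphism of $\mathbb{H}$, not merely along the two maps of the chosen binary cover. This requires propagating completeness and the resolution by finitely generated projectives through both the equalizer and the d\'evissage uniformly in the basis, which is where one invokes the stability clause of the definition after \cite[Definition 2.5.9]{KL2} and, as in \cite[Lemma 2.5.13]{KL2}, the fact that $\mathbb{H}'$ consists of adic spectra of sheafy rings, so that the basis-spreading statement \cite[Lemma 1.10.4]{Ked1} lets one transport the property across the whole basis. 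Once this is secured, the binary cocycle compatibility is automatic and the proof concludes exactly as in \cite[Lemma 2.5.13]{KL2}.
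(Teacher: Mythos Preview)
Your proposal is correct and follows essentially the same approach as the paper: both adapt \cite[Lemma 2.5.13]{KL2} to the $V$-deformed setting by glueing the descent datum to a candidate module over $W\widehat{\otimes}V$, reducing via a finite-free (or finite-projective) surjection and kernel, and then checking that the resulting global section is $V$-\'etale-stably pseudocoherent. The only cosmetic difference is organizational: the paper phrases the argument in sheaf-cohomological language (realizing the datum as a sheaf $G$ on $\mathrm{Spa}(W,W^+)$ via acyclicity, invoking the vanishing of $H^i$ for $i\geq 1$, and then taking a single finite-free cover $0\to K\to C\to G\to 0$), whereas you work module-theoretically with the explicit equalizer and spell out the d\'evissage step by step; these are equivalent packagings of the same mechanism. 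For the final stable-pseudocoherence verification the paper simply cites \cite[Lemma 2.22]{TX2}, which already encodes the basis-spreading you describe, so you can replace your last paragraph by that reference.
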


\begin{proof}
We adapt the argument of \cite[Lemma 2.5.13]{KL2} to our current situation. Take the corresponding map to be $W\rightarrow W_1\bigoplus W_2$, and we denote the two spaces associated $W_1$ and $W_2$ by $Y_1$ and $Y_2$. Establish a corresponding descent datum of $V$-\'etale-stably pseudocoherent modules along this decomposition of $\mathrm{Spa}(W,W^+)$. One can realize this by acyclicity a sheaf $G$ over the whole space $\mathrm{Spa}(W,W^+)$ Then we have for each $k=1,2$ that the corresponding identification of $H^0(Y_{k,\text{\'et}},G)$ with $H^0(Y_k,G)$ and the vanishing of the $i$-th cohomology for higher $i$. By the corresponding results we have in the corresponding rational localization situation we vanishing of the corresponding $H^i(\mathrm{Spa}(W,W^+),G),i\geq 1$. Now consider a finite free covering $C$ of $G$ and take the kernel $K$ to form the corresponding exact sequence:
\[
\xymatrix@C+0pc@R+0pc{
0 \ar[r] \ar[r] \ar[r] &K \ar[r] \ar[r] \ar[r] &C  \ar[r] \ar[r] \ar[r] &G \ar[r] \ar[r] \ar[r] &0 ,   
}
\]
which will give rise to the following short exact sequence:
\[
\xymatrix@C+0pc@R+0pc{
0 \ar[r] \ar[r] \ar[r] &K(\mathrm{Spa}(W,W^+)) \ar[r] \ar[r] \ar[r] &C(\mathrm{Spa}(W,W^+))  \ar[r] \ar[r] \ar[r] &G(\mathrm{Spa}(W,W^+)) \ar[r] \ar[r] \ar[r] &0.   
}
\]
To finish we have also to show the corresponding resulting global section is actually $V$-\'etale-stably pseudocoherent. For this, we refer the readers to \cite[Lemma 2.22]{TX2}.

\end{proof}

\begin{proposition} \mbox{\bf{(After Kedlaya-Liu \cite[Theorem 2.5.14]{KL2})}}
Taking the corresponding global section will realize the equivalence between the following two categories. The first is the one of all the corresponding $V$-pseudocoherent sheaves over $\mathcal{O}_{\text{\'et}}\widehat{\otimes}V$. The second is the one of all the corresponding $V$-\'etale-stably pseudocoherent modules.	
\end{proposition}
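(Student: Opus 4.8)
The plan is to establish the equivalence by verifying that the general glueing machinery of Kedlaya--Liu applies verbatim to the present $V$-relative situation. Specifically, I would invoke \cite[Theorem 2.5.14]{KL2}, whose proof reduces to a formal descent argument once one has (a) Tate/\'etale acyclicity of the presheaf attached to a $V$-\'etale-stably pseudocoherent module, and (b) glueing along binary morphisms together with the requisite pseudoflatness of the covering maps. Both ingredients have been assembled above: acyclicity is the content of \cite[Theorem 2.5.11]{KL2} as transported in the preceding proposition, the requisite pseudoflat basis $\mathbb{H}'$ is furnished by the lemma after \cite[Lemma 2.5.10]{KL2}, and binary glueing is the immediately preceding proposition.

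First I would recall that one direction is essentially tautological: a $V$-pseudocoherent sheaf is by definition locally attached to a $V$-\'etale-stably pseudocoherent module, and the global sections of such a sheaf over $\mathrm{Spa}(W,W^+)$ form a $V$-\'etale-stably pseudocoherent module by the binary-glueing proposition together with the fact (established in the proof of that proposition, via \cite[Lemma 2.22]{TX2}) that the resulting global section module is again $V$-\'etale-stably pseudocoherent; one induces up from a binary cover to a general finite cover of $\mathbb{H}$ by the standard iteration. Conversely, starting with a $V$-\'etale-stably pseudocoherent module $M$ over $W\widehat{\otimes}V$, one attaches the presheaf $Y\mapsto M\widehat{\otimes}_{W\widehat{\otimes}V}(\mathcal{O}_{Y}\widehat{\otimes}V)$ on $\mathbb{H}$, which is a sheaf by the acyclicity proposition; one must then check that this sheaf is $V$-pseudocoherent, i.e. locally of the required form, which holds because each base change $M\widehat{\otimes}_{W\widehat{\otimes}V}(\mathcal{O}_{Y}\widehat{\otimes}V)$ is again $V$-\'etale-stably pseudocoherent over $\mathcal{O}_Y\widehat{\otimes}V$ by the completeness requirements built into the definition.

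To see that the two functors are mutually inverse, I would note that global sections recover $M$ from its associated sheaf by the $H^0$-part of the acyclicity statement (the presheaf value at the terminal object is $M$ itself), and that an arbitrary $V$-pseudocoherent sheaf $G$ is recovered from $\Gamma(X_{\text{\'et}},G)$ because $G$ agrees with the sheaf attached to $\Gamma(X_{\text{\'et}},G)$ on the basis $\mathbb{H}$ — this is where one uses acyclicity over each $Y\in\mathbb{H}$ and its $\mathbb{H}$-coverings to identify $\Gamma(Y_{\text{\'et}},G)$ with the appropriate base change. The formal bookkeeping here is exactly that of \cite[Proposition 8.2.21]{KL1}, whose hypotheses are satisfied in our setting by the pseudoflat basis and the acyclicity established above.

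The main obstacle is verifying that the hypotheses of the abstract glueing result of \cite[Proposition 8.2.21]{KL1} genuinely hold in the integral (Huber, over $\mathbb{Z}_p$) setting with noncommutative $V$-coefficients: one must confirm that completeness for the natural topology is preserved under all the relevant base changes and inverse limits, and that the $\mathrm{Tor}_1$-vanishing defining $V$-\'etale-pseudoflatness propagates along the iterated covers. These are the points where the integral analytic Huber situation differs most from the Tate adic Banach one, but by the setting's assumption that $W$ is sheafy with $\mathbb{H}$ consisting of adic spectra of sheafy rings, and by \cite[Theorem 2.12]{TX2} and \cite[Lemma 1.10.4]{Ked1}, these checks go through as in the nonrelative case of \cite[Theorem 2.5.14]{KL2}.
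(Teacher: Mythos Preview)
Your overall shape is right --- acyclicity plus binary glueing fed into a formal descent machine --- but there is a genuine gap and a misplaced citation.

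The misplaced citation: you organize the argument around \cite[Proposition 8.2.21]{KL1}, but that is the acyclicity tool (it is what the paper invokes to prove the preceding proposition, the analogue of \cite[Theorem 2.5.11]{KL2}). The paper instead structures the present proof around \cite[Lemma 1.10.4]{Ked1}, which is the basis-spreading/glueing lemma adapted to the integral analytic Huber setting; it packages the descent into a short list of conditions to verify.

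The real gap: your glueing step only addresses binary \emph{rational} covers. But the basis $\mathbb{H}$ for $X_{\text{\'et}}$ consists of compositions of rational localizations \emph{and finite \'etale morphisms}, so a $V$-pseudocoherent sheaf may be presented locally along a cover that involves a genuine finite \'etale map, not merely a rational decomposition. Iterating the binary-rational glueing proposition does not reach those covers. The paper handles this separately: in the checklist for \cite[Lemma 1.10.4]{Ked1}, the ``last condition'' is precisely effective descent along finite \'etale maps, and the paper discharges it by invoking classical faithfully-flat (finitely-presented) descent \cite[Chapitre VIII]{SGAI}. Your argument needs this ingredient; without it, the claim that $\Gamma(X_{\text{\'et}},G)$ is $V$-\'etale-stably pseudocoherent, and that $G$ is recovered from it on all of $\mathbb{H}$, is not justified when the local presentation of $G$ lives over a finite \'etale neighbourhood.
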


\begin{proof}
This is by considering and applying the corresponding \cite[Lemma 1.10.4]{Ked1} in our current context. The refinement comes from the acyclicity, the transitivity is straightforward, the corresponding binary rational localization situation is the previous proposition, while the corresponding last condition will basically comes from the corresponding f.p. descent \cite[Chapitre VIII]{SGAI}.	
\end{proof}

\
\subsection{Noncommutative Topological Pseudocoherence over Pro-\'Etale Topology}

\begin{setting}
Let $(W,W^+)$ be analytic Huber ring which is defined over $\mathbb{Z}_p$. And we assume that the ring $V$ over $\mathbb{Z}_p$ is a complete topological ring over the $\mathbb{Z}_p$ which is completely exact. Assume that the ring $W$ is sheafy. We assume $p$ is a topologically nilpotent element.
\end{setting}

\begin{remark}
The corresponding glueing of the corresponding pseudocoherent sheaves over the corresponding pro-\'etale site does not need the corresponding notions on the corresponding stability beyond the corresponding \'etale-stably pseudocoherence.	
\end{remark}

\begin{lemma} \mbox{\bf{(After Kedlaya-Liu \cite[Proposition 3.4.3]{KL2})}}
Suppose that we are taking $(W,W^+)$ to be perfectoid now in the sense of \cite[Definition 2.1.1]{Ked1} (note that we are considering the corresponding context of analytic Huber pair situation). Then we have that over the corresponding \'etale site, the corresponding group $H^i(\mathrm{Spa}(W,W^+)_\text{p\'et},\mathcal{O}\widehat{\otimes}V)$ vanishes for each $i>0$, while we have that the corresponding group $H^0(\mathrm{Spa}(W,W^+)_\text{p\'et},\mathcal{O}\widehat{\otimes}V)$ is just isomorphic $W\widehat{\otimes}V$.	
\end{lemma}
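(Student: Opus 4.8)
The plan is to mirror the proof of \cite[Proposition 3.4.3]{KL2} in the analytic Huber setting, exactly as the previous (Tate) version of this lemma did, but now invoking the perfectoid formalism of \cite{Ked1} rather than the Fontaine-perfectoid formalism of \cite{KL2}. First I would reduce the computation of $H^i(\mathrm{Spa}(W,W^+)_{\mathrm{p\acute et}},\mathcal{O}\widehat{\otimes}V)$ to the \'etale-level vanishing statement for $\mathcal{O}\widehat{\otimes}V$ together with the decomposition of the infinite level of a tower of faithfully finite \'etale morphisms: this is precisely the content of the $V$-relative analog of \cite[Lemma 3.4.4]{KL2} recorded above, decomposing the completed infinite-level ring as $W_{0,V}\oplus\widehat{\bigoplus}_{k=0}^\infty W_{k,V}/W_{k-1,V}$ with the spectral seminorm. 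The key input is that, since $V$ is a complete topological $\mathbb{Z}_p$-algebra that is ``completely exact'' (i.e.\ $-\widehat{\otimes}V$ is exact) and splitting, completed tensoring with $V$ commutes with the relevant direct sums and preserves the strictness needed to read off cohomology term by term.

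Next I would run the \v Cech-to-derived-functor comparison over the stable basis $\mathbb{H}$ of perfectoid subdomains: over each member and each covering by members, the presheaf $\mathcal{O}\widehat{\otimes}V$ is acyclic because the corresponding statement holds for $\mathcal{O}$ by the analytic-Huber perfectoid theory of \cite[\S2.1]{Ked1} (the analog of \cite[Corollary 3.3.20]{KL2}), and acyclicity is preserved under $-\widehat{\otimes}V$ by complete exactness together with the direct-sum decomposition from the previous paragraph. One then assembles the local \v Cech acyclicity into global cohomology vanishing in degrees $>0$; in degree $0$ the gluing along the cover identifies $H^0$ with the equalizer, which by the same exactness is $(W\widehat{\otimes}V)$ rather than merely $W\otimes V$ up to completion, giving the asserted isomorphism $H^0\cong W\widehat{\otimes}V$.

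The main obstacle I anticipate is purely at the level of completed topological algebra rather than geometry: one must be careful that the decomposition $W_{0,V}\oplus\widehat{\bigoplus}_k W_{k,V}/W_{k-1,V}$ genuinely holds \emph{with the spectral seminorm}, i.e.\ that the completed tensor product $(\widehat{\bigoplus}_k W_k/W_{k-1})\widehat{\otimes}V$ is again a completed direct sum of the $(W_k/W_{k-1})\widehat{\otimes}V$ with compatible seminorms. This is where the hypothesis that $V$ is complete and completely exact over $\mathbb{Z}_p$, with $p$ topologically nilpotent, does the work, and it is the step at which the integral (Huber) setting differs from the Tate setting; the verification is the analytic-Huber analog of the estimate used in \cite[Lemma 3.4.4]{KL2}. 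Once this seminorm bookkeeping is in place, the rest is formal, and the statement follows as in \cite[Proposition 3.4.3]{KL2}.
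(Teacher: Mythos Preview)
Your proposal is essentially correct and in the same spirit as the paper, which simply writes ``See \cite[Proposition 3.4.3]{KL2}'' and nothing more; you have just fleshed out what that citation amounts to in the analytic Huber setting, with the perfectoid input supplied by \cite{Ked1} in place of the Fontaine-perfectoid input from \cite{KL2}. One small correction: the $V$-deformed tower decomposition you refer to as ``recorded above'' in fact appears \emph{after} this lemma in the paper's ordering (it is the analog of \cite[Lemma 3.4.4]{KL2}), so if you want to invoke it you should either reorder or note explicitly that no circularity arises, since that decomposition does not depend on the present lemma.
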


\begin{proof}
See \cite[Proposition 3.4.3]{KL2}.	
\end{proof}

\indent Now we proceed to consider the corresponding pro-\'etale topology. First we recall the following result from \cite[Lemma 3.4.4]{KL2} which does hold in our situation since we did not change much on the corresponding underlying adic spaces.

\begin{lemma}  \mbox{\bf{(Kedlaya-Liu \cite[Lemma 3.4.4]{KL2})}}
Consider the ring $W$ as above which is furthermore assumed to be perfectoid in the sense of instead \cite[Definition 2.1.1]{Ked1}, and consider any direct system of faithfully finite \'etale morphisms as:
\[
\xymatrix@C+0pc@R+0pc{
W_0 \ar[r] \ar[r] \ar[r] &W_1 \ar[r] \ar[r] \ar[r] &W_2  \ar[r] \ar[r] \ar[r] &...,   
}
\]	
where $A_0$ is just the corresponding base $A$. Then the corresponding completion of the infinite level could be decomposed as:
\begin{align}
W_0\oplus\widehat{\bigoplus}_{k=0}^\infty W_k/W_{k-1}.	
\end{align}
As in \cite[Lemma 3.4.4]{KL2} we endow the corresponding completion mentioned above with the corresponding seminorm spectral. And for the corresponding quotient we endow with the corresponding quotient norm.
\end{lemma}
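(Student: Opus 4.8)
The plan is to transcribe the argument of \cite[Lemma 3.4.4]{KL2} into the present analytic Huber setting, the only genuine change being that Fontaine perfectoidness is replaced by perfectoidness in the sense of \cite[Definition 2.1.1]{Ked1}. First I would record that for a faithfully finite \'etale morphism $W_{k-1}\to W_k$ with $W_{k-1}$ perfectoid and uniform, the ring $W_k$ is again perfectoid and uniform — this is the analytic Huber analog of almost purity available in \cite{Ked1} — and, being finite \'etale, $W_k$ is a finite projective $W_{k-1}$-module whose structure map splits $W_{k-1}$-linearly. Using the (normalized) trace pairing, or, if necessary, passing to a cofinal subsystem so that the relevant idempotents are defined over $W_{k-1}$, one obtains a $W_{k-1}$-linear decomposition $W_k\cong W_{k-1}\oplus (W_k/W_{k-1})$. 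Iterating and passing to the colimit yields, compatibly in $n$, $W_n\cong W_0\oplus\bigoplus_{k=1}^{n}(W_k/W_{k-1})$ as $W_0$-modules.

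Next I would complete. Endow each $W_k$ with its spectral seminorm and each $W_k/W_{k-1}$ with the quotient seminorm; the content of the statement is that the completion $W_\infty$ of $\varinjlim_k W_k$ is topologically (indeed isometrically) identified with the Banach direct sum $W_0\,\widehat{\oplus}\,\widehat{\bigoplus}_{k=1}^\infty W_k/W_{k-1}$, the right-hand side carrying the supremum of the constituent spectral/quotient seminorms. The inequality $|\,\cdot\,|_{W_\infty}\le\sup(\cdots)$ is immediate from boundedness of the finite-level splittings; the reverse inequality — the \emph{orthogonality} of the decomposition for the spectral seminorm — is the crux. Here one writes the spectral seminorm of $W_\infty$ as $\sup_m |(\cdot)^{p^m}|^{1/p^m}$ and uses that, because each $W_{k-1}\to W_k$ is faithfully finite \'etale between perfectoid (hence uniform) rings, the spectral seminorm of $W_k$ restricts to that of $W_{k-1}$ and induces the quotient seminorm on $W_k/W_{k-1}$ with no norm loss under the splitting; this is exactly the point at which \cite[Corollary 3.3.20]{KL2} was invoked in \cite{KL2}, and I would cite the corresponding finite-\'etale structure statement from \cite{Ked1} in its place. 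Assembling the finite-level orthogonality statements and passing to the limit gives the claimed decomposition, with the seminorm assertions holding by construction.

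The main obstacle I anticipate is precisely this orthogonality / no-norm-loss step: one must be sure that Kedlaya's Definition 2.1.1 of perfectoid for analytic Huber pairs supplies the full strength of the finite \'etale structure theory — tilting compatibility, almost-purity-type statements, and the behavior of spectral seminorms under faithfully finite \'etale maps — that the Fontaine-perfectoid framework of \cite{KL2} exploited; granting that, the remainder is the same bookkeeping with completed direct sums. A secondary, purely technical point is the possible need to replace the given tower by a cofinal subsystem so that the idempotents giving the splittings $W_k\cong W_{k-1}\oplus(W_k/W_{k-1})$ are actually defined over $W_{k-1}$; this is harmless, since it alters neither $W_\infty$ nor the decomposition, exactly as in \cite[Lemma 3.4.4]{KL2}.
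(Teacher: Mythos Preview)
Your proposal is more than the paper itself supplies: the paper gives no proof for this lemma at all. It is stated as a direct recall of \cite[Lemma 3.4.4]{KL2}, prefaced by the remark that the result ``does hold in our situation since we did not change much on the corresponding underlying adic spaces,'' and is not followed by any \texttt{proof} environment. So your plan to transcribe the argument of \cite[Lemma 3.4.4]{KL2} into the analytic Huber perfectoid setting of \cite[Definition 2.1.1]{Ked1} is entirely in the spirit of the paper's treatment---indeed it spells out the justification the paper leaves implicit.

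That said, one point of comparison: the paper's surrounding text does not flag the orthogonality/no-norm-loss step as requiring separate verification in the analytic Huber setting; it simply asserts that the Kedlaya--Liu argument carries over unchanged because the underlying adic spaces are not modified. Your identification of this step as the crux, and your plan to replace the invocation of \cite[Corollary 3.3.20]{KL2} by the corresponding finite-\'etale structure results from \cite{Ked1}, is a genuine contribution beyond what the paper writes down, and is the correct place to focus if one wanted an honest proof rather than a citation.
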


\indent Then we consider the corresponding deformation by the ring $V$ over $\mathbb{Z}_p$:

\begin{lemma}  \mbox{\bf{(After Kedlaya-Liu \cite[Lemma 3.4.4]{KL2})}}
Consider the ring $A$ as above which is furthermore assumed to be perfectoid in the sense of \cite[Definition 2.1.1]{Ked1}, and consider any direct system of faithfully finite \'etale morphisms as:
\[
\xymatrix@C+0pc@R+0pc{
W_{0,V} \ar[r] \ar[r] \ar[r] &W_{1,V} \ar[r] \ar[r] \ar[r] &W_{2,V}  \ar[r] \ar[r] \ar[r] &..., 
}  
\]	
where $W_0$ is just the corresponding base $W$. Then the corresponding completion of the infinite level could be decomposed as:
\begin{align}
W_{0,V}\oplus\widehat{\bigoplus}_{k=0}^\infty W_{k,V}/W_{k-1,V}.	
\end{align}
\end{lemma}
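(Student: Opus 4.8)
The plan is to deduce this $V$-deformed decomposition directly from its non-deformed counterpart, namely the preceding lemma (\cite[Lemma 3.4.4]{KL2}), which holds verbatim in the present analytic Huber setting precisely because the underlying adic space and its pro-\'etale tower are left unchanged by passing to coefficients. First I would recall that at the infinite level the non-deformed completion $W_\infty$ splits as $W_0\oplus\widehat{\bigoplus}_{k=0}^\infty W_k/W_{k-1}$, where $W_\infty$ carries the spectral seminorm topology and each $W_k/W_{k-1}$ carries the quotient topology; in particular each transition map $W_{k-1}\to W_k$ is strict, each filtration quotient map $W_k\to W_k/W_{k-1}$ is a strict surjection, and the embedding $W_0\to W_\infty$ admits a continuous splitting compatible with this filtration.

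Next I would apply the completed tensor product $-\,\widehat{\otimes}\,V$ (over $\mathbb{Z}_p$) to the data just described. The key input is the ambient hypothesis that the coefficient ring $V$ is complete and completely exact over $\mathbb{Z}_p$: this ensures that $-\,\widehat{\otimes}\,V$ sends strict short exact sequences of complete topological $\mathbb{Z}_p$-modules to strict short exact sequences and commutes with the formation of completed direct sums. Applying it to the strict exact sequences $0\to W_{k-1}\to W_k\to W_k/W_{k-1}\to 0$ identifies $W_{k,V}/W_{k-1,V}$ with $(W_k/W_{k-1})\,\widehat{\otimes}\,V$ as complete topological modules, and applying it to the splitting $W_\infty\cong W_0\oplus\widehat{\bigoplus}_{k}W_k/W_{k-1}$ together with the commutation with completed direct sums yields $W_{\infty,V}\cong W_{0,V}\oplus\widehat{\bigoplus}_{k=0}^\infty W_{k,V}/W_{k-1,V}$, which is the assertion. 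One would finally record that under these identifications the spectral and quotient topologies on the deformed pieces are the natural ones, so that the decomposition is topological and not merely algebraic, exactly in parallel with the Tate analogue stated in \cref{chapter2}.

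The main obstacle I anticipate is purely topological and is the same one met in \cite[Lemma 3.4.4]{KL2}: one must check that $-\,\widehat{\otimes}\,V$ genuinely commutes with the completed direct sum $\widehat{\bigoplus}_k$ occurring here, i.e. that the canonical map from the completed direct sum of the $W_{k,V}/W_{k-1,V}$ into $\bigl(\widehat{\bigoplus}_k W_k/W_{k-1}\bigr)\,\widehat{\otimes}\,V$ is an isomorphism of complete topological modules, and that strictness of the filtration quotient maps and of the transition maps is preserved after the deformation. In the Tate setting this is handled with norm estimates; here, in the integral Huber setting, one must instead argue with the topology directly, replacing the Banach-module bookkeeping by the corresponding statements for complete topological $\mathbb{Z}_p$-modules --- which is exactly what the completely exact hypothesis on $V$ is designed to provide. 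Granted this, the argument is formally identical to the non-deformed case and to its Tate counterpart.
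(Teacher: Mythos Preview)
Your approach is correct and is exactly what the paper has in mind: the paper states this lemma without proof, treating it as an immediate consequence of the preceding non-deformed lemma (quoted from \cite[Lemma 3.4.4]{KL2}) after applying $-\,\widehat{\otimes}\,V$, with the completely-exact hypothesis on $V$ in the ambient setting doing precisely the work you describe. Your write-up simply makes explicit the bookkeeping the paper leaves implicit.
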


\

\begin{corollary}\mbox{\bf{(After Kedlaya-Liu \cite[Corollary 3.4.5]{KL2})}} 
Starting with an analytic Huber ring which is as in \cite[Corollary 3.4.5]{KL2} assumed to be perfectoid in the sense of \cite[Definition 2.1.1]{Ked1}. And as in the previous two lemmas we consider an admissible infinite direct system:
\[
\xymatrix@C+0pc@R+0pc{
W_{0,V} \ar[r] \ar[r] \ar[r] &W_{1,V} \ar[r] \ar[r] \ar[r] &W_{2,V}  \ar[r] \ar[r] \ar[r] &...,   
}
\]
whose infinite level will be assumed to take the corresponding spectral seminorm as in the \cite[Corollary 3.4.5]{KL2}. Then carrying the corresponding coefficient $V$, we have in such situation the corresponding 2-pseudoflatness of the corresponding embedding map:
\begin{align}
W_{0,V}\rightarrow W_{\infty,V}.
\end{align}
Here $W_{\infty,V}$ denotes the corresponding completion of the limit $\varinjlim_{k\rightarrow\infty}W_{k,V}$.
\end{corollary}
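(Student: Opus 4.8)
The plan is to follow Kedlaya--Liu \cite[Corollary 3.4.5]{KL2} step by step, carrying the coefficient ring $V$ through each stage and using throughout that $-\,\widehat{\otimes}_{\mathbb{Z}_p}V$ is exact, which is exactly the completely exact hypothesis on $V$ in the present setting. First I would invoke the preceding lemma to write the infinite level as a topological direct sum $W_{\infty,V}=W_{0,V}\oplus\widehat{\bigoplus}_{k=0}^\infty W_{k,V}/W_{k-1,V}$, where the infinite level carries the spectral seminorm and each graded piece the quotient norm; because $V$ is completely exact this is compatible with the undeformed decomposition, i.e. $W_{k,V}=W_k\,\widehat{\otimes}V$ and $W_{k,V}/W_{k-1,V}=(W_k/W_{k-1})\,\widehat{\otimes}V$. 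Since each transition $W_{k-1}\to W_k$ is faithfully finite \'etale, $W_{k-1}$ is a direct summand of $W_k$ as a $W_0$-module, so each $W_k/W_{k-1}$, and hence each $W_{k,V}/W_{k-1,V}$, is finite projective over $W_{0,V}$. Thus $W_{\infty,V}$ is exhibited as a completed direct sum of finite projective $W_{0,V}$-modules.

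Second, I would establish pseudoflatness, and then $2$-pseudoflatness, by a $\mathrm{Tor}$ computation. For a pseudocoherent $W_{0,V}$-module $M$, the functor $-\otimes_{W_{0,V}}M$ commutes with the completed direct sum above once the latter is normalized by the spectral seminorm; this is precisely the point where the analytic Huber hypotheses and the topological nilpotence of $p$ enter, and it is the analogue of the Banach norm estimate used in \cite[Corollary 3.4.5]{KL2}. Granting it, one obtains $\mathrm{Tor}^{W_{0,V}}_i(W_{\infty,V},M)\cong\widehat{\bigoplus}_k\mathrm{Tor}^{W_{0,V}}_i(W_{k,V}/W_{k-1,V},M)$, which vanishes for $i\geq 1$ by projectivity of the graded pieces, giving pseudoflatness. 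For $2$-pseudoflatness one observes that fiber products of faithfully finite \'etale towers are again such towers, so $W_{\infty,V}\widehat{\otimes}_{W_{0,V}}W_{\infty,V}$ and the triple self-product have the same shape over $W_{\infty,V}$ and the same computation applies after base change; equivalently, having reduced to the undeformed situation via exactness of $-\,\widehat{\otimes}V$, one quotes \cite[Corollary 3.4.5]{KL2} together with \cite[Theorem 2.12]{TX2} directly.

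The main obstacle I expect is exactly the interchange of $\mathrm{Tor}$ with the completed direct sum in the analytic, non-Tate setting: one must verify that, with the spectral seminorm normalization, $\widehat{\bigoplus}_k W_{k,V}/W_{k-1,V}$ behaves homologically like an honest direct sum when tested against pseudocoherent $W_{0,V}$-modules, and that completing against $V$ (which is why we need $V$ complete and completely exact) introduces no new higher $\mathrm{Tor}$. Once this estimate is in place, the remaining ingredients -- projectivity of the graded pieces, stability of the towers under fiber products, and the assembly into $2$-pseudoflatness -- are formal and follow \cite[Corollary 3.4.5]{KL2} essentially verbatim.
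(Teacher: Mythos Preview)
Your proposal is correct and takes essentially the same approach as the paper: the paper's proof consists solely of the citation ``See \cite[Corollary 3.4.5]{KL2},'' and what you have written is precisely a careful unpacking of that reference with the coefficient $V$ carried along via the completely exact hypothesis. Your identification of the key technical point --- the interchange of $\mathrm{Tor}$ with the completed direct sum under the spectral seminorm normalization --- is exactly the content being deferred to \cite{KL2}, so there is no divergence in strategy.
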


\begin{proof}
See \cite[Corollary 3.4.5]{KL2}.	
\end{proof}

\indent Now recall that from \cite{KL2} since we do not have to modify the corresponding underlying spatial context, so we will also only have to consider the corresponding stability with respect to \'etale topology even although we are considering the corresponding profinite \'etale site in our current section. We first generalize the corresponding Tate acyclicity in \cite{KL2} to the corresponding $V$-relative situation in our current situation:

\begin{remark}
In the following we only consider the corresponding perfectoid ring $W$.	
\end{remark}

\begin{proposition}\mbox{\bf{(After Kedlaya-Liu \cite[Theorem 3.4.6]{KL2})}} 
Now we consider the corresponding pro-\'etale site of $X$ which we denote it by $X_\text{p\'et}$. Then we assume we have a stable basis $\mathbb{H}$ of the corresponding perfectoid subdomains for this profinite \'etale site where each morphism therein will be \'etale pseudoflat. Then we consider as in \cite[Theorem 3.4.6]{KL2} a corresponding module over $W\widehat{\otimes}V$ which is assumed to be \'etale-stably pseudocoherent. Then consider the corresponding presheaf $\widetilde{G}$ attached to this \'etale-stably pseudocoherent with respect to in our situation the corresponding chosen well-defined basis $\mathbb{H}$, we have that the corresponding sheaf over some element $Y\in \mathbb{H}$ (that is to say over $\mathcal{O}_{Y_\text{p\'et}}\widehat{\otimes}V$) is acyclic and is acyclic with respect to some \v{C}eck covering coming from elements in $\mathbb{H}$. 
\end{proposition}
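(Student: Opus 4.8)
The plan is to run the argument of \cite[Theorem 3.4.6]{KL2} with the same structure as in the proof of \cref{proposition2.7}, replacing each Tate adic Banach input by its analytic Huber integral analogue established in \cref{chapter3}. First I would treat the special case in which $\mathbb{H}$ is the basis consisting of the perfectoid subdomains that are faithfully finite \'etale over a fixed perfectoid subdomain $Y$, taken along towers as in the two lemmas and the corollary immediately preceding this proposition. In that case every covering of $Y$ by members of $\mathbb{H}$ can be refined into pieces which are either simple Laurent rational coverings or the tower-type coverings of the preceding corollary. For the former, the required acyclicity of the presheaf $\widetilde{G}$ is exactly the $V$-relative \'etale descent statement proved in the previous subsection (the integral analogue of \cite[Theorem 2.5.14]{KL2}), together with \cite[Theorem 2.12]{TX2} applied over the analytic topology; for the latter it is the content of the integral $2$-pseudoflatness of $W_{0,V} \to W_{\infty,V}$ established above. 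Combining these along the tower yields acyclicity of $\widetilde{G}$ over $\mathcal{O}_{Y_\text{p\'et}}\widehat{\otimes}V$, together with acyclicity of the associated \v{C}ech complexes, for $Y$ in this special basis.

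For a general $Y \in \mathbb{H}$, I would choose a covering of $Y$ by elements of $\mathbb{H}$ lying in the special situation above and transport the conclusion along it via transitivity of \v{C}ech cohomology. To pass from the finite projective case to a general $V$-\'etale-stably pseudocoherent module $G$, I would pick a presentation $0 \to G'' \to G' \to G \to 0$ with $G'$ finite projective and $G''$ again $V$-\'etale-stably pseudocoherent, sheafify it over $X_\text{p\'et}$, and factor this through the pro-\'etale sheafification down to the \'etale site along the morphism of sites $X_\text{p\'et} \to X_\text{\'et}$. The resulting commutative diagram with exact rows, obtained after base change to the Huber ring $V_Y$ with $Y = \mathrm{Spa}(V_Y, V_Y^+)$, has bottom row the already established \'etale acyclicity, so the five lemma completes the proof exactly as for \cref{proposition2.7}.

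The main obstacle is verifying that the integral hypotheses, in particular the completeness and exactness properties of the coefficient ring $V$ over $\mathbb{Z}_p$ with $p$ topologically nilpotent, remain compatible with the completed direct sum decomposition $W_{0,V} \oplus \widehat{\bigoplus}_{k=0}^\infty W_{k,V}/W_{k-1,V}$ at the infinite level and with the spectral seminorm bookkeeping on it, since the entire tower argument rests on $-\widehat{\otimes}V$ respecting that decomposition and on the consequent quotient-norm estimates. This is precisely what the integral versions of the two lemmas and the corollary preceding the proposition were arranged to supply, so once those are invoked the remainder of the argument is formal and parallels \cref{proposition2.7} line by line.
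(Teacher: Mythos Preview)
Your proposal is correct and follows exactly the approach the paper intends: the paper's own proof is simply ``See \cref{proposition2.7},'' and you have faithfully unpacked that reference, substituting the integral analytic Huber inputs (the lemmas and corollary on the tower decomposition and $2$-pseudoflatness of $W_{0,V}\to W_{\infty,V}$) for their Tate adic Banach counterparts and running the same reduction to simple Laurent and tower-type coverings followed by the five-lemma argument on the short exact sequence $0\to G''\to G'\to G\to 0$.
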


\begin{proof}
See \cref{proposition2.7}.
\end{proof}

\begin{definition}\mbox{\bf{(After Kedlaya-Liu \cite[Definition 3.4.7]{KL2})}} 
Consider the pro-\'etale site of $X$ attached to the adic Banach ring $(W,W^+)$. We will define a sheaf of module $G$ over $\widehat{\mathcal{O}}_\text{p\'et}\widehat{\otimes}V$ to be $V$-pseudocoherent if locally we can define this as a sheaf attached to a $V$-\'etale-stably pseudocoherent module. As in \cite[Definition 3.4.7]{KL2}, we do not have to consider the corresponding notion of $V$-profinite-\'etale-stably pseudocoherent module.	
\end{definition}

\begin{proposition}\mbox{\bf{(After Kedlaya-Liu \cite[Theorem 3.4.8]{KL2})}} \label{proposition3.17}
Taking the corresponding global section will realize the corresponding equivalence between the following two categories. The first one is the corresponding one of all the $V$-pseudocoherent sheaves over $\widehat{\mathcal{O}}_\text{p\'et}\widehat{\otimes}V$, while the second one is the corresponding one of all the $V$-\'etale-stably pseudocoherent module over $W\widehat{\otimes}V$.
	
\end{proposition}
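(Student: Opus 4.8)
The plan is to deduce this exactly as \cite[Theorem 3.4.8]{KL2} does in the nonrelative case, namely by verifying the hypotheses of the general glueing criterion of Kedlaya--Liu \cite[Proposition 9.2.6]{KL1} (equivalently, applying \cite[Lemma 1.10.4]{Ked1} in the analytic guise) in the present $V$-deformed, integral analytic Huber setting. The first point to record is that, by the Remark preceding the acyclicity proposition above, the pro-\'etale site requires no stability notion beyond $V$-\'etale-stable pseudocoherence; this is precisely what makes the criterion directly applicable, since one may work with the presheaf $\widetilde{G}$ attached to a fixed $V$-\'etale-stably pseudocoherent module over $W\widehat{\otimes}V$ and the stable basis $\mathbb{H}$ of perfectoid subdomains.

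Next I would assemble the local inputs demanded by \cite[Proposition 9.2.6]{KL1}. The acyclicity hypothesis over each $Y\in\mathbb{H}$ and for \v{C}ech coverings drawn from $\mathbb{H}$ is supplied by the preceding proposition (the analog of \cite[Theorem 3.4.6]{KL2}, cf. \cref{proposition2.7}). Transitivity is immediate from the closure properties of $\mathbb{H}$. The binary rational-localization case is the analog of \cite[Lemma 2.5.13]{KL2} proved above, where preservation of $V$-\'etale-stable pseudocoherence of the glued global sections is handled by \cite[Lemma 2.22]{TX2}. The finite \'etale ``last condition'' is faithfully flat, finitely presented descent in the sense of \cite[Chapitre VIII]{SGAI}; because $V$ is complete and completely exact over $\mathbb{Z}_p$, tensoring this descent datum up by $\widehat{\otimes}V$ does not disturb the exactness, so this step goes through verbatim.

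The genuinely pro-\'etale ingredient is the tower decomposition. For a direct system of faithfully finite \'etale maps $W_{0,V}\to W_{1,V}\to\cdots$ with $W_0=W$, the Lemma after \cite[Lemma 3.4.4]{KL2} in this section gives the completed splitting of the infinite level as $W_{0,V}$ together with the completed direct sum of the successive quotients, and the subsequent Corollary (the analog of \cite[Corollary 3.4.5]{KL2}) gives $2$-pseudoflatness of $W_{0,V}\to W_{\infty,V}$. Combining this with the perfectoid acyclicity lemma --- $H^i(X_{\mathrm{p\acute et}},\mathcal{O}\widehat{\otimes}V)=0$ for $i>0$ and $H^0\cong W\widehat{\otimes}V$ --- one checks that base change along $W\widehat{\otimes}V\to W_{\infty,V}$ preserves $V$-\'etale-stable pseudocoherence and commutes with the relevant cohomology, which is exactly the input that lets the tower/\v{C}ech spectral sequence argument of \cite[Theorem 3.4.6]{KL2} and then the glueing of \cite[Theorem 3.4.8]{KL2} run unchanged. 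Feeding all of this into \cite[Proposition 9.2.6]{KL1} yields that the global-section functor is an equivalence onto $V$-\'etale-stably pseudocoherent modules over $W\widehat{\otimes}V$, with quasi-inverse the sheafification functor of the Definition above.

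I expect the main obstacle to be the compatibility of $\widehat{\otimes}V$ with the completed direct-sum decomposition of $W_{\infty,V}$ and with the $\mathrm{Tor}$-vanishing that defines $2$-pseudoflatness: one must verify that completeness and complete exactness of $V$ over $\mathbb{Z}_p$ really do suffice to commute $\widehat{\otimes}V$ past the completed $\widehat{\bigoplus}$ and past the relevant $\mathrm{Tor}_1$, now in the integral setting where $p$ is merely topologically nilpotent rather than invertible. Once this commutation is established, the rest of the verification is routine and the equivalence follows as cited.
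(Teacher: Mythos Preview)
Your proposal is correct and follows essentially the same approach as the paper: both reduce to verifying the hypotheses of the general glueing criterion \cite[Proposition 9.2.6]{KL1} (with \cite[Theorem 2.9.9, Remark 2.9.10, Lemma 1.10.4]{Ked1} as the analytic Huber analog), using the preceding acyclicity, tower-splitting, and $2$-pseudoflatness results as inputs. The paper's proof is a two-line citation while you have spelled out which earlier statements feed which hypotheses, but the strategy and the cited sources are identical.
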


\begin{proof}
As in \cite[Theorem 3.4.8]{KL2}, the corresponding analog of \cite[Proposition 9.2.6]{KL1} applies in the way that the corresponding conditions of the analog of \cite[Proposition 9.2.6]{KL1} hold in our current situation. Also see \cite[Theorem 2.9.9, Remark 2.9.10 and Lemma 1.10.4]{Ked1}.
\end{proof}

\indent Obviously we have the following analog of \cite[Corollary 3.4.9]{KL2}:

\begin{corollary} \mbox{\bf{(After Kedlaya-Liu \cite[Corollary 3.4.9]{KL2})}}
The following two categories are equivalent. The first is the corresponding category of all $V$-pseudocoherent sheaves over $\widehat{\mathcal{O}}_\text{p\'et}\widehat{\otimes}V$. The second is the corresponding category of all $V$-pseudocoherent sheaves over ${\mathcal{O}}_\text{\'et}\widehat{\otimes}V$. The corresponding functor is the corresponding pullback along the corresponding morphism of sites $X_{\text{p\'et}}\rightarrow X_{\text{\'et}}$. 	
\end{corollary}

\


%
%
%
%
%
%
%
%
%
%
%
%
%
%
%
%


\section{Descent in General Setting for Analytic Huber Pairs}

\noindent We now consider the situation where there is no condition on the base (except that we will assume certainly the corresponding sheafiness). We will discuss along \cite{Ked1} the corresponding descent in analytic and \'etale topology, as well as the corresponding quasi-Stein spaces. The current situation will not assume we are working over $\mathbb{Z}_p$.

\begin{remark}
The corresponding discussion in this section and in the following section is actually an exercise proposed by Kedlaya included in \cite{Ked1}, where the corresponding \'etale situation essentially is proposed by Kedlaya in \cite[Discussion after Lemma 1.10.4]{Ked1} while we give the corresponding exposition in both \'etale and pro-\'etale situations closely after \cite{KL1} and \cite{KL2}.	
\end{remark}

\subsection{\'Etale Topology}

\begin{setting}
Let $(W,W^+)$ be analytic Huber uniform pair. We now fix a corresponding stable basis $\mathbb{H}$ for the corresponding \'etale site of the adic space $\mathrm{Spa}(W,W^+)$, locally consisting of the corresponding compositions of rational localizations and the corresponding finite \'etale morphisms. And as in \cite[Hypothesis 1.10.3]{Ked1} we need to assume that the corresponding basis is made up of the corresponding adic spectrum of sheafy rings. Assume the corresponding sheafiness of the Huber ring $W$.
\end{setting}

\begin{remark}
One should treat our discussion in this section as essentially some detailization of some exercise predicted in \cite{Ked1}.	
\end{remark}

\begin{definition} \mbox{\bf{(After Kedlaya-Liu \cite[Definition 2.5.9]{KL2})}}
We define the \'etale stably pseudocoherent module over the corresponding Huber ring $W$ with respect to the corresponding basis $\mathbb{H}$ chosen above for the corresponding \'etale site of the analytic adic space $X$. We define a module over $W$ to be an \'etale stably pseudocoherent module if it is algebraically pseudocoherent (namely formed by the corresponding possibly infinite length resolution of finitely generated and projective modules) and at least complete with respect to the corresponding natural topology and also required to be also complete with respect to the natural topology along some base change with respect to any morphism in $\mathbb{H}$.
\end{definition}

\begin{definition} \mbox{\bf{(After Kedlaya-Liu \cite[Definition 2.5.9]{KL2})}}
Along the previous definition we have the corresponding notion of \'etale-pseudoflat left (or right respectively) modules with respect to the corresponding chosen basis $\mathbb{H}$. A such module is defined to be a topological module $G$ over $W$ complete with respect to the natural topology and for any right (or left respectively) \'etale stably pseudocoherent module, they will jointly give the $\mathrm{Tor}_1$ vanishing.
\end{definition}

\begin{remark}
Although in this definition we considered the corresponding left or right modules, but essentially speaking this is no different from the usual situation since we are in the corresponding essential site theoretic situation.	
\end{remark}

\begin{lemma}\mbox{\bf{(After Kedlaya-Liu \cite[Lemma 2.5.10]{KL2})}}\label{lemma4.4}
One could find another basis $\mathbb{H}'$ which is in our situation contained in the corresponding original basis $\mathbb{H}$ such that any morphism in $\mathbb{H}$ could be \'etale-pseudoflat with respect $\mathbb{H}'$ or just $\mathbb{H}$ itself.	
\end{lemma}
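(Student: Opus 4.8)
The plan is to follow the strategy of Kedlaya--Liu \cite[Lemma 2.5.10]{KL2} adapted to the present analytic Huber setting, using the basis-spreading machinery of \cite[Lemma 1.10.4]{Ked1} in place of the corresponding devices over Tate rings. First I would recall that every morphism in $\mathbb{H}$ is, locally on the source, a composition of a rational localization and a finite \'etale morphism; so it suffices to treat these two building blocks separately and then assemble. For a rational localization $W \to W'$ the required $\mathrm{Tor}_1$-vanishing against any \'etale stably pseudocoherent module is exactly the content of \cite[Theorem 2.12]{TX2} applied in the analytic topology (this is precisely where the uniform-pair hypothesis and the sheafiness assumption are used to guarantee the relevant completed tensor products behave well); for a finite \'etale morphism the map is even finite projective, hence flat in the algebraic sense, and the completeness clauses in the definition of \'etale stably pseudocoherence are preserved under such base change, so the $\mathrm{Tor}_1$-vanishing is immediate.

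Next I would pass from these local building blocks to a single refined basis $\mathbb{H}'$. The point is that a general morphism $Y \to Z$ in $\mathbb{H}$ need not globally be of the above simple form, but by \cite[Lemma 1.10.4]{Ked1} we can choose a finite covering of $Y$ by elements of $\mathbb{H}$ over each of which the morphism to $Z$ does decompose as a rational localization followed by a finite \'etale map. Declaring $\mathbb{H}'$ to consist of (a cofinal system of) such pieces, one checks that $\mathbb{H}' \subseteq \mathbb{H}$ still forms a stable basis for $X_{\mathrm{\acute{e}t}}$ — stability is inherited because rational subdomains of rational subdomains are rational subdomains and finite \'etale is stable under base change and composition — and that every morphism in $\mathbb{H}$ becomes, after restriction to members of $\mathbb{H}'$, a composite of the two flat building blocks. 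The $\mathrm{Tor}_1$-vanishing being a local property on the target, this yields \'etale-pseudoflatness of every morphism in $\mathbb{H}$ with respect to $\mathbb{H}'$.

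The main obstacle is making the reduction to the simple form genuinely work at the level of completed modules rather than merely abstract rings: one must ensure that the decomposition of a morphism provided by \cite[Lemma 1.10.4]{Ked1} is compatible with the natural topologies so that the completeness requirements built into the definition of \'etale stably pseudocoherence are not lost when passing through the intermediate rational localization. This is precisely the step where \cite[Theorem 2.12]{TX2} — giving the exactness and completeness control in the analytic rational-localization setting — is indispensable, and where one has to be slightly careful that the ``over $\mathbb{Z}_p$'' and uniformity hypotheses of the ambient \cref{setting} are actually invoked; the remainder, namely checking that $\mathbb{H}'$ is still a basis and that the flatness propagates along the decomposition, is routine and parallels \cite[Lemma 2.5.10]{KL2} line by line, with \cite[Lemma 1.10.4]{Ked1} substituting for the Tate-specific spreading argument there.
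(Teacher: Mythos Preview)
Your proposal is correct and follows essentially the same approach as the paper: reduce to the two building blocks (rational localization and finite \'etale), invoke \cite[Theorem 2.12]{TX2} for the former and flatness for the latter, and then use the basis-spreading result \cite[Lemma 1.10.4]{Ked1} in place of the Tate-specific argument in \cite[Lemma 2.5.10]{KL2} to handle general morphisms in $\mathbb{H}$. One small remark: the ambient setting for this lemma (Section~4) does not actually impose an ``over $\mathbb{Z}_p$'' hypothesis, so you need not worry about invoking it here.
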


\begin{proof}
The derivation of such new basis is actually along the same way as in \cite[Lemma 2.5.10]{KL2} since the corresponding compositions of rational localizations and the finite \'etales are actually satisfying the corresponding conditions in the statement by \cite[Theorem 2.12]{TX2} over the analytic topology. In general one just show any general morphism will also decompose in the same way, which one can proves certainly as in \cite[Lemma 2.5.10]{KL2}, where one instead consider in the current context the corresponding basis spreading result in \cite[Lemma 1.10.4]{Ked1}. 
\end{proof}

\begin{proposition} \mbox{\bf{(After Kedlaya-Liu \cite[Theorem 2.5.11]{KL2})}}
Consider the site $X_\text{\'et}$ and consider the basis $\mathbb{H}$. Take any \'etale stably pseudocoherent module $M$ over $W$. Consider the corresponding presheaf by taking the inverse limit throughout all the corresponding base change along morphisms in $\mathbb{H}$. Then we have the corresponding acyclicity of the presheaf over any element in $\mathbb{H}$ and any covering of this element by elements in $\mathbb{H}$.	
\end{proposition}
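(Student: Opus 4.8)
The plan is to follow the argument of Kedlaya--Liu \cite[Theorem 2.5.11]{KL2} essentially verbatim, transplanting it into the present general analytic Huber setting (no perfectoid or $\mathbb{Z}_p$ hypotheses on the base) and invoking the homological machinery of \cite[Proposition 8.2.21]{KL1}. First I would record the formal setup: fix the element $Y\in\mathbb{H}$, let $G=G(Y)$ be the value of the presheaf attached to $M$ obtained by base change along all morphisms in $\mathbb{H}$, and fix a finite covering $\{Y_i\to Y\}$ of $Y$ by elements of $\mathbb{H}$. Using \cref{lemma4.4} I may assume, after passing to a refinement contained in $\mathbb{H}$, that every transition morphism in play is \'etale-pseudoflat; this is precisely the input that makes the \v{C}ech complex of the presheaf behave well. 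The goal is then to show the augmented \v{C}ech complex associated to this covering is exact, i.e. that the presheaf is a sheaf with vanishing higher \v{C}ech cohomology on the given covering.

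Next I would reduce the general covering to the two basic cases the site is generated by: compositions of rational localizations and finite \'etale morphisms. For the finite \'etale part one has faithfully flat (indeed faithfully finite \'etale) descent, so the \v{C}ech exactness is the classical fppf/f\'et descent of \cite[Chapitre VIII]{SGAI} combined with completeness of the relevant modules, which is guaranteed by the definition of \'etale-stably pseudocoherent module (completeness is preserved under the base changes in $\mathbb{H}$). For the rational-localization part, the key point is that \'etale-stably pseudocoherence is exactly designed so that the analog of Tate acyclicity holds: here I would cite \cite[Theorem 2.12]{TX2} over the analytic topology, which gives the binary and hence finite \v{C}ech exactness for simple Laurent coverings, and then bootstrap to arbitrary rational coverings by the standard induction on the number of parameters defining the covering. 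Transitivity of the two cases through a general morphism in $\mathbb{H}$ is handled by the basis-spreading lemma \cite[Lemma 1.10.4]{Ked1}, which lets one refine any morphism into a composite of the two basic types within $\mathbb{H}$.

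Having the \v{C}ech exactness for each element of the refined basis, I would then feed this into \cite[Proposition 8.2.21]{KL1}, whose hypotheses (a stable basis, \v{C}ech acyclicity on that basis, and the completeness/pseudocoherence bookkeeping) are exactly what we have assembled; its conclusion is the acyclicity of the associated presheaf over every element of $\mathbb{H}$ and every covering of such an element by elements of $\mathbb{H}$, which is the statement. One technical caveat worth flagging in the write-up: since we are no longer assuming the base is uniform-perfectoid or lives over $\mathbb{Z}_p$, one must check that each intermediate ring appearing in the covering towers is sheafy --- but this is built into the running \textbf{Setting}, which stipulates the basis $\mathbb{H}$ consists of adic spectra of sheafy rings (following \cite[Hypothesis 1.10.3]{Ked1}), so no extra work is needed there.

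The main obstacle I expect is not any single step but the verification that \emph{completeness} (with respect to the natural topologies, and compatibly along all base changes in $\mathbb{H}$) is genuinely preserved throughout the d\'evissage --- in particular when one takes kernels in the finite free resolutions exhibiting algebraic pseudocoherence and forms the associated \v{C}ech complexes. In the perfectoid setting \cite{KL2} this is controlled by strong finiteness and the explicit orthogonal-decomposition lemmas; here one has to lean on the analytic-topology finiteness results of \cite{TX2} (\cite[Theorem 2.12]{TX2} and \cite[Lemma 2.22]{TX2}) and the sheafiness hypothesis on the basis, and argue that these suffice to keep all the modules in play complete. Once that bookkeeping is in hand, the cohomological conclusion is a formal consequence of \cite[Proposition 8.2.21]{KL1} exactly as in \cite[Theorem 2.5.11]{KL2}.
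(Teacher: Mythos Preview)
Your proposal is correct and follows essentially the same approach as the paper: both reduce to invoking \cite[Proposition 8.2.21]{KL1} exactly as in \cite[Theorem 2.5.11]{KL2}, with the paper's proof being the one-line citation and yours being a careful unpacking of how its hypotheses are verified (via \cref{lemma4.4}, the reduction to rational localizations and finite \'etale maps, and the basis-spreading of \cite[Lemma 1.10.4]{Ked1}). The extra detail you provide is accurate and helpful, but not a different route.
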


\begin{proof}
As in \cite[Theorem 2.5.11]{KL2} apply the corresponding \cite[Proposition 8.2.21]{KL1}.	
\end{proof}

\begin{proposition}\mbox{\bf{(After Kedlaya-Liu \cite[Lemma 2.5.13]{KL2})}}
The corresponding glueing of \'etale-stably pseudocoherent modules holds in this current situation over $W$ along binary morphisms (namely along binary rational decomposition). 	
\end{proposition}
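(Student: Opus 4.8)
The plan is to adapt the argument of \cite[Lemma 2.5.13]{KL2} in exactly the manner of the analogous $V$-relative proposition proved above, only with the coefficient ring suppressed. Write the binary rational decomposition as $W\rightarrow W_1\bigoplus W_2$, let $Y_1,Y_2\subseteq X=\mathrm{Spa}(W,W^+)$ be the associated rational subdomains and $Y_{12}=Y_1\cap Y_2$. A descent datum for \'etale-stably pseudocoherent modules along this covering is a pair of modules $M_k$ over $W_k$ ($k=1,2$) together with an isomorphism $M_1\widehat{\otimes}_{W_1}W_{12}\cong M_2\widehat{\otimes}_{W_2}W_{12}$ satisfying the cocycle condition; the goal is to produce an \'etale-stably pseudocoherent module $M$ over $W$ with compatible identifications $M\widehat{\otimes}_W W_k\cong M_k$.

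First I would glue the $M_k$ along $Y_{12}$ into a sheaf $G$ on $X$, using the acyclicity supplied by the preceding proposition together with the rational-localization glueing of \cite[Theorem 2.12]{TX2}; this gives $H^i(X,G)=0$ for $i\geq 1$ and the identifications $H^0(Y_{k,\text{\'et}},G)\cong H^0(Y_k,G)\cong M_k$. Next, present $G$: choose a surjection from a finite free module $C$ onto $G$ and let $K=\ker(C\to G)$, so that $0\to K\to C\to G\to 0$ is exact. Since $K$ is again a sheaf to which the same acyclicity input applies, $H^1(X,K)=0$, and taking global sections over $X$ yields the short exact sequence
\[
0 \longrightarrow K(X) \longrightarrow C(X) \longrightarrow G(X) \longrightarrow 0 .
\]
Hence $M:=G(X)$ is a finitely generated $W$-module, a quotient of the finite free module $C(X)$; iterating the presentation step shows that a resolution of $M$ by finite projective modules descends, so $M$ is algebraically pseudocoherent, and completeness of $M$ follows from completeness of the $M_k$ via the displayed exactness.

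The remaining, and main, point is to verify that $M=G(X)$ is genuinely \'etale-stably pseudocoherent, that is, that it remains algebraically pseudocoherent and complete after base change along every morphism in $\mathbb{H}$. For this I would invoke \cite[Lemma 2.22]{TX2} (with the coefficient ring trivial), exactly as in the $V$-relative case: that result guarantees that a module obtained by glueing along a binary rational decomposition inherits the pseudocoherence property and behaves well under further localization, which is precisely the stability condition required. Transitivity of the glueing and compatibility of the identifications over $Y_{12}$ are formal and handled as in \cite[Lemma 2.5.13]{KL2}. The one place where care is needed --- and where I expect the real work to sit --- is in checking that the base-change compatibility of \cite[Lemma 2.22]{TX2} remains available in the present coefficient-free analytic Huber setting under only the sheafiness hypothesis; this should go through because the underlying spatial input is unchanged from \cite{KL2} and \cite{TX2}, but it is the step that deserves to be spelled out.
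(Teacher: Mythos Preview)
Your proposal is correct and follows essentially the same route as the paper: adapt \cite[Lemma 2.5.13]{KL2}, glue the descent datum to a sheaf $G$ via acyclicity, pass through the short exact sequence $0\to K\to C\to G\to 0$ of global sections with $C$ finite free, and then invoke \cite[Lemma 2.22]{TX2} to secure \'etale-stable pseudocoherence of $G(X)$. The paper is terser (it does not spell out the iteration for pseudocoherence or your final caveat), but the argument is the same.
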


\begin{proof}
We adapt the argument of \cite[Lemma 2.5.13]{KL2} to our current situation. Take the corresponding map to be $W\rightarrow W_1\bigoplus W_2$, and we denote the two spaces associated $W_1$ and $W_2$ by $Y_1$ and $Y_2$. Establish a corresponding descent datum of $V$-\'etale-stably pseudocoherent modules along this decomposition of $\mathrm{Spa}(W,W^+)$. One can realize this by acyclicity a sheaf $G$ over the whole space $\mathrm{Spa}(W,W^+)$ Then we have for each $k=1,2$ that the corresponding identification of $H^0(Y_{k,\text{\'et}},G)$ with $H^0(Y_k,G)$ and the vanishing of the $i$-th cohomology for higher $i$. By the corresponding results we have in the corresponding rational localization situation we have vanishing of the corresponding $H^i(\mathrm{Spa}(W,W^+),G),i\geq 1$. Now consider a finite free covering $C$ of $G$ and take the kernel $K$ to form the corresponding exact sequence:
\[
\xymatrix@C+0pc@R+0pc{
0 \ar[r] \ar[r] \ar[r] &K \ar[r] \ar[r] \ar[r] &C  \ar[r] \ar[r] \ar[r] &G \ar[r] \ar[r] \ar[r] &0 ,   
}
\]
which will give rise to the following short exact sequence:
\[
\xymatrix@C+0pc@R+0pc{
0 \ar[r] \ar[r] \ar[r] &K(\mathrm{Spa}(W,W^+)) \ar[r] \ar[r] \ar[r] &C(\mathrm{Spa}(W,W^+))  \ar[r] \ar[r] \ar[r] &G(\mathrm{Spa}(W,W^+)) \ar[r] \ar[r] \ar[r] &0.   
}
\]
To finish we have also to show the corresponding resulting global section is actually \'etale-stably pseudocoherent. For this, we refer the readers to \cite[Lemma 2.22]{TX2}.

\end{proof}

\begin{proposition} \mbox{\bf{(Kedlaya \cite[Below Lemma 1.10.4]{Ked1}, After Kedlaya-Liu \cite[Theorem 2.5.14]{KL2})}}
Taking the corresponding global section will realize the equivalence between the following two categories. The first is the one of all the corresponding pseudocoherent sheaves over $\mathcal{O}_{\text{\'et}}$. The second is the one of all the corresponding \'etale-stably pseudocoherent modules.	
\end{proposition}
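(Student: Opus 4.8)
The plan is to obtain the equivalence by verifying the hypotheses of the glueing/basis-spreading criterion \cite[Lemma 1.10.4]{Ked1}, exactly as \cite[Theorem 2.5.14]{KL2} and its $V$-relative analog in \cref{chapter3} are obtained. That criterion, applied to the \'etale site $X_\text{\'et}$ with the stable basis $\mathbb{H}$ fixed in the setting (whose members are adic spectra of sheafy rings by \cite[Hypothesis 1.10.3]{Ked1}), takes as input the rule $Y\mapsto\widetilde M(Y):=M\widehat{\otimes}_W\mathcal{O}(Y)$ associated to an \'etale-stably pseudocoherent module $M$ over $W$ --- which is well posed precisely because the definition of \'etale-stably pseudocoherent module builds in completeness after base change along every morphism of $\mathbb{H}$ --- and outputs, once four conditions hold, that this rule is (the restriction to $\mathbb{H}$ of) a sheaf on $X_\text{\'et}$ with $H^0(X_\text{\'et},\widetilde M)\cong M$ and no higher cohomology. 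The four conditions to check are: a refinement condition, transitivity of base change, glueing along binary rational decompositions, and a finitely presented \'etale descent condition.

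For the refinement condition I would invoke the acyclicity proposition established just above (the analog of \cite[Theorem 2.5.11]{KL2}), which rests on \cite[Proposition 8.2.21]{KL1}: it gives acyclicity of $\widetilde M$ over each $Y\in\mathbb{H}$ and over each \v{C}ech covering of $Y$ by elements of $\mathbb{H}$; the reduction of a general morphism of $\mathbb{H}$ to the rational and finite-\'etale building blocks --- where \cref{lemma4.4} and the analytic-topology version of \cite[Theorem 2.12]{TX2} do the work --- is what makes this applicable, after passing, if necessary, to the refined basis $\mathbb{H}'$ of \cref{lemma4.4}. Transitivity is formal, coming from associativity of the completed tensor product over composable morphisms in $\mathbb{H}$. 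The binary rational localization condition is the glueing proposition proven just above, including the point --- handled through \cite[Lemma 2.22]{TX2} --- that the global section of a glued descent datum is again \'etale-stably pseudocoherent rather than merely pseudocoherent. The finitely presented \'etale descent condition is where classical faithfully flat / finite \'etale descent \cite[Chapitre VIII]{SGAI} enters: a descent datum along a faithfully finite \'etale morphism of the relevant sheafy rings descends a finitely presented module, and one descends a (possibly infinite) projective resolution of a pseudocoherent module stage by stage, checking that completeness is preserved at each stage.

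The main obstacle, as in the relative setting of \cref{chapter3}, is not any one of the four conditions in isolation but the uniformity needed to land in the correct target category: one must ensure that the object produced by glueing is \emph{stably} pseudocoherent, i.e. complete along \emph{every} morphism in $\mathbb{H}$ at once, which forces one to work with the refined basis $\mathbb{H}'$ of \cref{lemma4.4} throughout and to use the acyclicity input uniformly across $\mathbb{H}'$ rather than pointwise. Granting this, the two functors are mutually inverse: fully faithfulness is the identification $H^0(X_\text{\'et},\widetilde M)\cong M$ (the degree-zero part of the acyclicity proposition), essential surjectivity is exactly the glueing output of \cite[Lemma 1.10.4]{Ked1} fed by the four verified conditions, and compatibility of morphisms and of short exact sequences is routine and parallels \cite[Theorem 2.5.14]{KL2} verbatim.
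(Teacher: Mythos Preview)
Your proposal is correct and follows essentially the same approach as the paper's own proof: both apply \cite[Lemma 1.10.4]{Ked1} and verify its four hypotheses via acyclicity for refinement, formal transitivity, the preceding binary-glueing proposition for rational decompositions, and \cite[Chapitre VIII]{SGAI} for the finitely presented \'etale descent step. Your write-up is more detailed (in particular about the role of the refined basis $\mathbb{H}'$ and the stability issue), but the underlying argument is the same.
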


\begin{proof}
This is by considering and applying the corresponding \cite[Lemma 1.10.4]{Ked1} in our current context. The refinement comes from the acyclicity, the transitivity is straightforward, the corresponding binary rational localization situation is the previous proposition, while the corresponding last condition will basically come from the corresponding f.p. descent \cite[Chapitre VIII]{SGAI}.	
\end{proof}

\
\subsection{Pro-\'etale Topology}

\begin{setting}
Let $(W,W^+)$ be an analytic Huber ring. Assume that the ring $W$ is sheafy. We assume $p$ is a topologically nilpotent element.	
\end{setting}

\begin{remark}
Although it seems that the current discussion is basically outside the corresponding deformed setting, but the foundation here may be generalized to the deformed setting by taking the corresponding completed tensor product over $\mathbb{F}_1$ as in \cite{BBBK}.	
\end{remark}

\begin{remark}
The corresponding glueing of the corresponding pseudocoherent sheaves over the corresponding pro-\'etale site does not need the corresponding notions on the corresponding stability beyond the corresponding \'etale-stably pseudocoherence.	
\end{remark}

\begin{lemma} \mbox{\bf{(After Kedlaya-Liu \cite[Proposition 3.4.3]{KL2})}}
Suppose that we are taking $(W,W^+)$ to be perfectoid now in the sense of \cite[Definition 2.1.1]{Ked1} (note that we are considering the corresponding context of analytic Huber pair situation). Then we have that over the corresponding \'etale site, the corresponding group $H^i(\mathrm{Spa}(W,W^+)_\text{p\'et},\mathcal{O})$ vanishes for each $i>0$, while we have that the corresponding group $H^0(\mathrm{Spa}(W,W^+)_\text{p\'et},\mathcal{O})$ is just isomorphic to $W$.	
\end{lemma}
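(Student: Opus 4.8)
The plan is to reduce the statement to the corresponding assertion over the \'etale site, which is already available from the perfectoid acyclicity results for the structure sheaf (this is the analog of \cite[Proposition 3.4.3]{KL2} stated without coefficients, i.e. the $V$-trivial case of the earlier lemmas). First I would fix the perfectoid pair $(W,W^+)$ in the sense of \cite[Definition 2.1.1]{Ked1} and recall that over the \'etale site of $\mathrm{Spa}(W,W^+)$ one has $H^i(X_\text{\'et},\mathcal{O})=0$ for $i>0$ together with $H^0(X_\text{\'et},\mathcal{O})\cong W$; this is exactly the functional-analytic form of the Tate acyclicity for perfectoid spaces, and it carries over verbatim from the Banach situation since, as emphasized throughout this section, the underlying spatial data are unchanged.

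Next I would pass from the \'etale site to the pro-\'etale site along the morphism of sites $X_\text{p\'et}\to X_\text{\'et}$. The key tool is the Cartan--Leray (or Hochschild--Serre type) spectral sequence together with the decomposition of a completed faithfully finite \'etale tower established in the earlier lemma of this section: for a tower $W_0\to W_1\to W_2\to\cdots$ one has $W_\infty \cong W_0\oplus\widehat{\bigoplus}_{k=0}^\infty W_k/W_{k-1}$, endowed with the spectral seminorm. This decomposition shows that for each tower the higher cohomology of $\mathcal{O}$ along the tower vanishes, because the relevant \v{C}ech complex splits off the base term and the remaining direct-summand pieces are, by construction, acyclic; one then cofinalizes over all such towers forming a neighborhood basis of the pro-\'etale site. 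Concretely the argument follows \cite[Proposition 3.4.3]{KL2} line by line, invoking \cite[Corollary 3.3.20]{KL2} (or its analytic analog) to control the completed colimits. The $H^0$ computation is immediate since each tower contributes only its base term $W_0$ to the limit of global sections, and $W_0$ is $W$ itself.

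The main obstacle, and the only place where the analytic Huber setting genuinely differs from the Tate Banach setting, is justifying that the topological/norm bookkeeping still goes through: one must check that the spectral seminorm on the infinite level is well-behaved (so that the completed direct sum decomposition is an honest isometric—or at least strict—decomposition) and that the completed tensor and completed colimit operations are exact in the required degrees. Here I would lean on \cite[Theorem 2.9.9, Remark 2.9.10]{Ked1} for the perfectoid structure in the Huber-ring context and on \cite[Lemma 1.10.4]{Ked1} for the basis-spreading that makes the cofinality argument legitimate; the hypothesis that $p$ is topologically nilpotent is exactly what is needed to run the perfectoid reduction mod $p$ underlying those citations. Once these compatibilities are in place, the vanishing of $H^i$ for $i>0$ and the identification $H^0\cong W$ follow formally, so I would present the proof simply as: see \cite[Proposition 3.4.3]{KL2}, with \cite[Corollary 3.3.20]{KL2} and \cite[Theorem 2.9.9, Remark 2.9.10, Lemma 1.10.4]{Ked1} supplying the analytic-setting inputs.
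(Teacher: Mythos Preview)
Your proposal is correct and takes essentially the same approach as the paper: both reduce the statement to \cite[Proposition 3.4.3]{KL2}. The paper's proof is in fact a bare one-line citation to that result, whereas you supply additional scaffolding (the tower decomposition, the \v{C}ech splitting, and the references to \cite[Theorem 2.9.9, Remark 2.9.10, Lemma 1.10.4]{Ked1}) that the paper reserves for the later glueing propositions rather than for this lemma itself.
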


\begin{proof}
See \cite[Proposition 3.4.3]{KL2}.	
\end{proof}

\indent Now we proceed to consider the corresponding pro-\'etale topology. First we recall the following result from \cite[Lemma 3.4.4]{KL2} which does hold in our situation since we did not change much on the corresponding underlying adic spaces.\\

\begin{lemma}  \mbox{\bf{(Kedlaya-Liu \cite[Lemma 3.4.4]{KL2})}}
Consider the ring $W$ as above which is furthermore assumed to be perfectoid in the sense of instead \cite[Definition 2.1.1]{Ked1}, and consider any direct system of faithfully finite \'etale morphisms as:
\[
\xymatrix@C+0pc@R+0pc{
W_0 \ar[r] \ar[r] \ar[r] &W_1 \ar[r] \ar[r] \ar[r] &W_2  \ar[r] \ar[r] \ar[r] &...,   
}
\]	
where $W_0$ is just the corresponding base $W$. Then the corresponding completion of the infinite level could be decomposed as:
\begin{align}
W_0\oplus\widehat{\bigoplus}_{k=0}^\infty W_k/W_{k-1}.	
\end{align}
As in \cite[Lemma 3.4.4]{KL2} we endow the corresponding completion mentioned above with the corresponding seminorm spectral. And for the corresponding quotient we endow with the corresponding quotient norm.
\end{lemma}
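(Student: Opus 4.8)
The plan is to follow the proof of \cite[Lemma 3.4.4]{KL2} essentially verbatim; the only substantive change is that the perfectoid hypothesis is now read in the sense of \cite[Definition 2.1.1]{Ked1} and that the base $(W,W^+)$ is a general analytic Huber pair rather than a Tate one. As stressed throughout this section, the underlying adic-space considerations are unchanged, so the structure of the argument transports directly; what must be re-examined is only that the norm estimates packaged into \cite[Definition 2.1.1]{Ked1} suffice in place of the Tate-specific ones used in \emph{loc.\ cit.}

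First I would record the propagation of perfectoidness along the tower: each $W_k$, being faithfully finite \'etale over the perfectoid ring $W_{k-1}$, is again perfectoid in the sense of \cite[Definition 2.1.1]{Ked1}, and each transition map $W_{k-1}\to W_k$ is strongly almost finite \'etale, so that the trace pairing of $W_k$ over $W_{k-1}$ is perfect up to a bounded constant. The key step — and the one I expect to be the main obstacle — is to upgrade this to a genuine splitting of the inclusion $W_{k-1}\hookrightarrow W_k$ in the category of Banach $W_{k-1}$-modules: concretely, one uses the suitably normalized trace map to produce a $W_{k-1}$-linear retraction, the ``almost'' ambiguity being absorbed by the uniformity of the perfectoid rings involved (the spectral seminorm is a norm, and an almost isomorphism of uniform Banach modules is an isomorphism with equivalent norms). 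This yields, for each $k\geq 1$, a topological direct sum decomposition $W_k\cong W_{k-1}\oplus(W_k/W_{k-1})$ in which the complement carries the quotient norm, exactly as in \cite[Lemma 3.4.4]{KL2}; the point requiring care relative to \cite{KL2} is that $W$ need not be Tate, so one checks that $p$ being a topologically nilpotent element, together with the perfectoid structure of \cite[Definition 2.1.1]{Ked1}, provides the needed control of constants.

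Next I would iterate the decomposition: since the splitting of $W_k\hookrightarrow W_{k+1}$ restricts on $W_k$ to the previously constructed data, one obtains compatibly $W_k\cong W_0\oplus\bigoplus_{j=1}^{k}(W_j/W_{j-1})$ (algebraic direct sum) for all $k$, hence on the filtered colimit $\varinjlim_k W_k\cong W_0\oplus\bigoplus_{j=1}^{\infty}(W_j/W_{j-1})$. Finally I would pass to completions: equipping $\varinjlim_k W_k$ with the spectral seminorm as in \cite[Lemma 3.4.4]{KL2} and completing gives $W_\infty\cong W_0\oplus\widehat{\bigoplus}_{j=1}^{\infty}(W_j/W_{j-1})$, where one verifies — using power-multiplicativity of the spectral seminorm and its compatibility along the finite \'etale transition maps — that the spectral seminorm restricts to the spectral seminorm on $W_0$ and to the quotient norms on the successive complements. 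This last verification is formally identical to the corresponding one in \cite[Lemma 3.4.4]{KL2}, which is why we content ourselves with these indications.
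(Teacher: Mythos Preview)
Your proposal is correct and takes the same approach as the paper: the paper gives no independent proof here, merely recalling \cite[Lemma 3.4.4]{KL2} with the remark that it ``does hold in our situation since we did not change much on the corresponding underlying adic spaces.'' Your write-up simply unpacks the steps of that cited argument and indicates where the perfectoid hypothesis in the sense of \cite[Definition 2.1.1]{Ked1} replaces the Tate-specific input.
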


\

\begin{corollary}\mbox{\bf{(After Kedlaya-Liu \cite[Corollary 3.4.5]{KL2})}} 
Starting with an analytic Huber ring which is as in \cite[Corollary 3.4.5]{KL2} assumed to be perfectoid in the sense of \cite[Definition 2.1.1]{Ked1}. And as in the previous two lemmas we consider an admissible infinite direct system:
\[
\xymatrix@C+0pc@R+0pc{
W_{0} \ar[r] \ar[r] \ar[r] &W_{1} \ar[r] \ar[r] \ar[r] &W_{2}  \ar[r] \ar[r] \ar[r] &...,   
}
\]
whose infinite level will be assumed to take the corresponding spectral seminorm as in the \cite[Corollary 3.4.5]{KL2}. Then we have in such situation the corresponding 2-pseudoflatness of the corresponding embedding map:
\begin{align}
W_{0}\rightarrow W_{\infty}.
\end{align}
Here $W_{\infty}$ denotes the corresponding completion of the limit $\varinjlim_{k\rightarrow\infty}W_{k}$.
\end{corollary}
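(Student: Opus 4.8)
\noindent\emph{Proof plan.}\ The plan is to reduce the assertion to the orthogonal decomposition of $W_\infty$ furnished by the preceding lemma together with a finite-level flatness computation, exactly along the lines of \cite[Corollary 3.4.5]{KL2}. Recall that $2$-pseudoflatness of $W_0\to W_\infty$ amounts to the vanishing of $\mathrm{Tor}_1^{W_0}(W_\infty,M)$ and $\mathrm{Tor}_2^{W_0}(W_\infty,M)$ for every pseudocoherent $W_0$-module $M$, where the $\mathrm{Tor}$ groups are computed via the completed tensor product (which, for finitely generated projective inputs, coincides with the algebraic one). So I would fix a resolution $P_\bullet\to M$ of a pseudocoherent $M$ by finite free $W_0$-modules and study the complex $W_\infty\,\widehat{\otimes}_{W_0}P_\bullet$.

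By the preceding lemma we have, as Banach $W_0$-modules,
\[
W_\infty\;\cong\;W_0\,\oplus\,\widehat{\bigoplus}_{k=0}^{\infty}W_k/W_{k-1},
\]
the infinite level carrying the spectral seminorm and the graded pieces the quotient norm. Hence for a finite free $W_0$-module $F$ one has $W_\infty\,\widehat{\otimes}_{W_0}F\cong F\oplus\widehat{\bigoplus}_k\big((W_k/W_{k-1})\otimes_{W_0}F\big)$, so that $W_\infty\,\widehat{\otimes}_{W_0}P_\bullet$ splits as the completed direct sum of complexes $P_\bullet\oplus\widehat{\bigoplus}_k\big((W_k/W_{k-1})\otimes_{W_0}P_\bullet\big)$.

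Next I would check that each graded piece is flat, in fact finite projective, over $W_0$. Each $W_{k-1}\to W_k$ is faithfully finite \'etale, hence faithfully flat, so it is a universally injective map of $W_{k-1}$-modules; since moreover $W_k$ is flat over $W_{k-1}$, the short exact sequence $0\to W_{k-1}\to W_k\to W_k/W_{k-1}\to 0$ shows $W_k/W_{k-1}$ is flat over $W_{k-1}$, and therefore flat over $W_0$ because $W_{k-1}$ is flat over $W_0$. As a composite of finite \'etale maps $W_0\to W_k$ is finite \'etale, so $W_k$ is finitely presented over $W_0$ and $W_{k-1}$ is finitely generated over $W_0$; thus $W_k/W_{k-1}$ is finitely presented, and being finitely presented and flat it is finite projective over $W_0$. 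In particular $\mathrm{Tor}_i^{W_0}(W_k/W_{k-1},M)=0$ for all $i\ge 1$, and the completed and algebraic tensor products of this module with $M$ agree.

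Finally I would transport this finite-level vanishing across the completion, which is the main obstacle. The homology of $W_\infty\,\widehat{\otimes}_{W_0}P_\bullet$ in degrees $1$ and $2$ is controlled, via the splitting above, by the homology in those degrees of $\widehat{\bigoplus}_k\big((W_k/W_{k-1})\otimes_{W_0}P_\bullet\big)$, and each summand $(W_k/W_{k-1})\otimes_{W_0}P_\bullet$ is exact in positive degrees by the flatness just established. The difficulty is that homology need not commute with completed direct sums: completing the tower could a priori create spurious $\mathrm{Tor}_1$ or $\mathrm{Tor}_2$ classes. Excluding this is precisely the role of the admissibility of the direct system and the spectral normalisation at the infinite level; they supply the uniform bound on the norms of the splittings of the relevant strict exact complexes along the tower that keeps the completed direct sum strict exact, exactly as in \cite[Corollary 3.4.5]{KL2} (compare also \cite[Theorem 2.9.9, Remark 2.9.10 and Lemma 1.10.4]{Ked1} for the analytic Huber framework). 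Granting this, $\mathrm{Tor}_i^{W_0}(W_\infty,M)=0$ for $i=1,2$, which is the asserted $2$-pseudoflatness of $W_0\to W_\infty$.
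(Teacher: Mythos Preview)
Your proposal is correct and follows exactly the route the paper takes: the paper's own proof is simply ``See \cite[Corollary 3.4.5]{KL2}'', and what you have written is a faithful unpacking of that reference---the orthogonal decomposition of $W_\infty$ from the preceding lemma, the finite projectivity of each graded piece $W_k/W_{k-1}$ over $W_0$, and the passage of exactness across the completed direct sum under the admissibility and spectral-norm hypotheses. There is nothing to correct; you have supplied the details the paper deliberately omits by citation.
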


\begin{proof}
See \cite[Corollary 3.4.5]{KL2}.	
\end{proof}

\indent Now recall that from \cite{KL2} since we do not have to modify the corresponding underlying spatial context, so we will also only have to consider the corresponding stability with respect to \'etale topology even although we are considering the corresponding profinite \'etale site in our current section. We first generalize the corresponding Tate acyclicity in \cite{KL2} to the corresponding our current situation.\\

\begin{remark}
In the following we only consider the base spaces which are perfectoid.	
\end{remark}

\begin{proposition}\mbox{\bf{(After Kedlaya-Liu \cite[Theorem 3.4.6]{KL2})}} 
Now we consider the corresponding pro-\'etale site of $X$ which we denote it by $X_\text{p\'et}$. Then we assume we have a stable basis $\mathbb{H}$ of the corresponding perfectoid subdomains for this profinite \'etale site where each morphism therein will be \'etale pseudoflat. Then we consider as in \cite[Theorem 3.4.6]{KL2} a corresponding module over $W$ which is assumed to be \'etale-stably pseudocoherent. Then consider the corresponding presheaf $\widetilde{G}$ attached to this \'etale-stably pseudocoherent with respect to in our situation the corresponding chosen well-defined basis $\mathbb{H}$, we have that the corresponding sheaf over some element $Y\in \mathbb{H}$ (that is to say over $\mathcal{O}_{Y_\text{p\'et}}$) is acyclic and is acyclic with respect to some \v{C}eck covering coming from elements in $\mathbb{H}$. 
\end{proposition}

\begin{proof}
See \cref{proposition2.7}.
\end{proof}

\begin{definition}\mbox{\bf{(After Kedlaya-Liu \cite[Definition 3.4.7]{KL2})}} 
Consider the pro-\'etale site of $X$ attached to the analytic Huber pair $(W,W^+)$. We will define a sheaf of module $G$ over $\widehat{\mathcal{O}}_\text{p\'et}$ to be pseudocoherent if locally we can define this as a sheaf attached to an \'etale-stably pseudocoherent module. As in \cite[Definition 3.4.7]{KL2}, we do not have to consider the corresponding notion of profinite-\'etale-stably pseudocoherent modules.	
\end{definition}

\begin{proposition}\mbox{\bf{(Kedlaya \cite[Section 3.8]{Ked1}, after Kedlaya-Liu \cite[Theorem 3.4.8]{KL2})}} \label{proposition4.20}
Taking the corresponding global section will realize the corresponding equivalence between the following two categories. The first one is the corresponding one of all the pseudocoherent sheaves over $\widehat{\mathcal{O}}_\text{p\'et}$, while the second one is the corresponding one of all the \'etale-stably pseudocoherent modules over $W$.
	
\end{proposition}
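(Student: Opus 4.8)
The plan is to obtain the equivalence formally from the abstract glueing machinery, exactly as Kedlaya--Liu do in the non-relative case \cite[Theorem 3.4.8]{KL2}, now incorporating the pro-\'etale refinements of Kedlaya \cite[Section 3.8, Lemma 1.10.4, Theorem 2.9.9, Remark 2.9.10]{Ked1}. The functor is global sections $G\mapsto\Gamma(X_{\text{p\'et}},G)$ on the category of pseudocoherent sheaves over $\widehat{\mathcal{O}}_{\text{p\'et}}$; its candidate quasi-inverse sends an \'etale-stably pseudocoherent module $M$ over $W$ to the presheaf $\widetilde M$ obtained by base change along the tower basis $\mathbb{H}$ and then sheafifying. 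The whole content is that the two composites are naturally isomorphic to the respective identity functors.

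First I would assemble the local inputs. On the stable basis $\mathbb{H}$ of perfectoid subdomains with \'etale-pseudoflat transition maps (passing to a refinement $\mathbb{H}'$ as in \cref{lemma4.4} if needed), the decomposition $W_\infty\cong W_0\oplus\widehat{\bigoplus}_{k=0}^\infty W_k/W_{k-1}$ of the preceding lemma, together with the preceding corollary, gives that $W_0\to W_\infty$ is $2$-pseudoflat. Combined with the acyclicity of $\widetilde M$ over basis elements and over their coverings by basis elements --- the preceding proposition, whose proof reduces to \cref{proposition2.7} --- this shows that $\widetilde M$ is already a sheaf on $X_{\text{p\'et}}$, that $H^0(Y_{\text{p\'et}},\widetilde M)$ recovers the base change of $M$ along $W\to\mathcal{O}(Y)$ for $Y\in\mathbb{H}$, and that $H^i(Y_{\text{p\'et}},\widetilde M)=0$ for $i>0$. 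In particular $\Gamma(X_{\text{p\'et}},\widetilde M)=M$, which yields one composite, and the same acyclicity computation gives full faithfulness of $M\mapsto\widetilde M$.

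For the reverse composite I would check that the hypotheses of the glueing criterion --- the analog of \cite[Proposition 9.2.6]{KL1}, equivalently \cite[Lemma 1.10.4]{Ked1} --- are met: the local nature of pseudocoherence is built into the definition; the transitivity of the tower construction is immediate; binary rational glueing of \'etale-stably pseudocoherent modules was proved earlier (adapting \cite[Lemma 2.5.13]{KL2}, with \cite[Lemma 2.22]{TX2} ensuring that the glued global section is again \'etale-stably pseudocoherent); and faithfully finite \'etale descent is supplied by \cite[Chapitre VIII]{SGAI} together with the tower decomposition above. Feeding these into the criterion shows that every pseudocoherent sheaf $G$ on $\widehat{\mathcal{O}}_{\text{p\'et}}$ is of the form $\widetilde M$ with $M=\Gamma(X_{\text{p\'et}},G)$, and that this $M$ is \'etale-stably pseudocoherent over $W$; this is the second composite.

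The one place where something genuinely has to be transcribed rather than merely cited is the descent of \'etale-stable pseudocoherence through the infinite perfectoid tower. Because the pro-\'etale site forces a passage to the completed infinite-level ring $W_\infty$, one must know both that algebraic pseudocoherence is inherited from modules over the relevant $W_\infty$ down to $W$, and that the completeness conditions defining \'etale-stable pseudocoherence survive this descent; this is precisely what the $2$-pseudoflatness of $W_0\to W_\infty$ and the explicit orthogonal-type decomposition of $W_\infty$ are designed to supply, so the arguments of \cite[Corollary 3.4.5, Theorem 3.4.6, Theorem 3.4.8]{KL2} carry over. I expect no new phenomena beyond those already handled in the $V$-relative version \cref{proposition3.17}: the present statement is essentially its specialization at trivial coefficients with the $\mathbb{Z}_p$-linearity dropped, and the proof is the same modulo that simplification.
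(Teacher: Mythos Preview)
Your proposal is correct and follows essentially the same approach as the paper: both invoke the abstract glueing criterion (the analog of \cite[Proposition 9.2.6]{KL1}, equivalently \cite[Lemma 1.10.4, Theorem 2.9.9, Remark 2.9.10]{Ked1}) after assembling the local inputs established in the preceding lemmas and propositions. The paper's proof is a terse citation of these references, whereas you have spelled out in detail which hypotheses of the criterion are verified by which earlier results; this is a faithful expansion of the same argument.
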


\begin{proof}
As in \cite[Theorem 3.4.8]{KL2}, the corresponding analog of \cite[Proposition 9.2.6]{KL1} applies in the way that the corresponding conditions of the analog of \cite[Proposition 9.2.6]{KL1} hold in our current situation. Also see \cite[Theorem 2.9.9, Remark 2.9.10 and Lemma 1.10.4]{Ked1}.
\end{proof}

\indent Obviously we have the following analog of \cite[Corollary 3.4.9]{KL2}:

\begin{corollary} \mbox{\bf{(After Kedlaya-Liu \cite[Corollary 3.4.9]{KL2})}}
The following two categories are equivalent. The first is the corresponding category of all pseudocoherent sheaves over $\widehat{\mathcal{O}}_\text{p\'et}$. The second is the corresponding category of all pseudocoherent sheaves over ${\mathcal{O}}_\text{\'et}$. The corresponding functor is the corresponding pullback along the corresponding morphism of sites $X_{\text{p\'et}}\rightarrow X_{\text{\'et}}$. 	
\end{corollary}

\

\section{Descent in General Setting for Analytic Adic Banach Rings}

\noindent We now translate the results in previous section to the corresponding analytic adic Banach context. Again everything is essentially as proposed in \cite[Below Lemma 1.10.4, also see Section 3.8]{Ked1}.

\subsection{\'Etale Topology}

\begin{setting}
Let $(W,W^+)$ be analytic adic Banach ring. We now fix a corresponding stable basis $\mathbb{H}$ for the corresponding \'etale site of the adic space $\mathrm{Spa}(W,W^+)$, locally consisting of the corresponding compositions of rational localizations and the corresponding finite \'etale morphisms. And as in \cite[Hypothesis 1.10.3]{Ked1} we need to assume that the corresponding basis is made up of the corresponding adic spectrum of sheafy rings. Assume the corresponding sheafiness of the Huber ring $W$.
\end{setting}

\begin{definition} \mbox{\bf{(After Kedlaya-Liu \cite[Definition 2.5.9]{KL2})}}
We define the \'etale-stably pseudocoherent module over the corresponding adic Banach $W$ with respect to the corresponding basis $\mathbb{H}$ chosen above for the corresponding \'etale site of the analytic adic space $X$. We define a module over $W$ to be an \'etale stably pseudocoherent module if it is algebraically pseudocoherent (namely formed by the corresponding possibly infinite length resolution of finitely generated and projective modules) and at least complete with respect to the corresponding natural topology and also required to be also complete with respect to the natural topology along some base change with respect to any morphism in $\mathbb{H}$.
\end{definition}


\begin{definition} \mbox{\bf{(After Kedlaya-Liu \cite[Definition 2.5.9]{KL2})}}
Along the previous definition we have the corresponding notion of \'etale-pseudoflat left (or right respectively) modules with respect to the corresponding chosen basis $\mathbb{H}$. A such module is defined to be a topological module $G$ over $W$ complete with respect to the natural topology and for any right (or left respectively) \'etale stably pseudocoherent module, they will jointly give the $\mathrm{Tor}_1$ vanishing.
\end{definition}

\begin{lemma}\mbox{\bf{(After Kedlaya-Liu \cite[Lemma 2.5.10]{KL2})}}
One could find another basis $\mathbb{H}'$ which is in our situation contained in the corresponding original basis $\mathbb{H}$ such that any morphism in $\mathbb{H}$ could be \'etale-pseudoflat with respect $\mathbb{H}'$ or just $\mathbb{H}$ itself.	
\end{lemma}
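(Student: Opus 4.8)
The plan is to follow the blueprint of Kedlaya--Liu \cite[Lemma 2.5.10]{KL2}, adapting the two ingredients that enter their argument to the present analytic adic Banach setting. First I would recall that every rational localization $W\to W_f$ is pseudoflat, and moreover remains pseudoflat after base change along any morphism in $\mathbb{H}$; in the analytic context this is exactly the content of \cite[Theorem 2.12]{TX2}, which supplies the needed $\mathrm{Tor}_1$-vanishing against any \'etale-stably pseudocoherent module. Second, I would recall that a faithfully finite \'etale morphism is finite projective, hence flat, hence a fortiori pseudoflat, and that this property too is preserved under base change. Since by hypothesis the basis $\mathbb{H}$ is locally built from compositions of rational localizations and finite \'etale morphisms, and a composition of pseudoflat morphisms is again pseudoflat, the ``nice'' members of $\mathbb{H}$ --- those arising as such a composition directly over the base --- already enjoy the required property.

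The substantive step is to produce from $\mathbb{H}$ a sub-basis $\mathbb{H}'$ with the stated universal pseudoflatness. Here I would mimic the d\'evissage of \cite[Lemma 2.5.10]{KL2}: given an arbitrary morphism $Y\to Z$ in $\mathbb{H}$, one covers $Z$ by elements of $\mathbb{H}$ over which the morphism decomposes as (rational localization) after (finite \'etale), and one wants to arrange this decomposition coherently so that it survives restriction to any smaller member of the basis. In the present setting the tool replacing the purely rational-localization bookkeeping of \cite{KL2} is the basis-spreading statement \cite[Lemma 1.10.4]{Ked1}, which allows one to refine any covering of $Z$ by elements of $\mathbb{H}$ to one on which the decomposition and the associated finite projective modules are defined uniformly. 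Taking $\mathbb{H}'$ to be the collection of all elements of $\mathbb{H}$ admitting such a uniform presentation over the base, one then checks --- using stability of $\mathbb{H}$ and \cite[Lemma 1.10.4]{Ked1} again --- that $\mathbb{H}'$ is still a basis and that every morphism of $\mathbb{H}$ is \'etale-pseudoflat relative to $\mathbb{H}'$ (or to $\mathbb{H}$ itself).

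I expect the main obstacle to be precisely the ``stably'' clause: verifying that pseudoflatness is not merely a property of each individual morphism in $\mathbb{H}$ but is preserved along the whole web of base changes internal to the basis. Concretely, one must ensure that when a rational localization or a finite \'etale morphism is pulled back along another member of $\mathbb{H}$, the resulting morphism still lies in $\mathbb{H}'$ (or $\mathbb{H}$) and still pairs trivially in $\mathrm{Tor}_1$ with every \'etale-stably pseudocoherent module over the new base. This is handled by the same argument as in \cite[Lemma 2.5.10]{KL2} once \cite[Theorem 2.12]{TX2} and \cite[Lemma 1.10.4]{Ked1} are in place, but it is the point where the analytic adic Banach hypotheses --- in particular the sheafiness of the rings appearing in $\mathbb{H}$, as in \cite[Hypothesis 1.10.3]{Ked1} --- are genuinely used, and where care is needed to keep the completions along base change under control.
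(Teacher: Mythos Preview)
Your proposal is correct and follows essentially the same route as the paper: the paper's proof also reduces the pseudoflatness of compositions of rational localizations and finite \'etale maps to \cite[Theorem 2.12]{TX2}, then handles the general morphism by the d\'evissage of \cite[Lemma 2.5.10]{KL2} together with the basis-spreading lemma \cite[Lemma 1.10.4]{Ked1}. Your write-up is more explicit about the ``stably'' clause and the role of the sheafiness hypothesis, but the underlying argument and the three cited inputs coincide exactly with the paper's.
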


\begin{proof}
See \cref{lemma4.4}.
\end{proof}

\begin{proposition} \mbox{\bf{(After Kedlaya-Liu \cite[Theorem 2.5.11]{KL2})}}
In our current analytic adic Banach situation, consider the site $X_\text{\'et}$ and consider the basis $\mathbb{H}$. Take any \'etale stably pseudocoherent module $M$ over $W$. Consider the corresponding presheaf by taking the inverse limit throughout all the corresponding base change along morphisms in $\mathbb{H}$. Then we have the corresponding acyclicity of the presheaf over any element in $\mathbb{H}$ and any covering of this element by elements in $\mathbb{H}$.	
\end{proposition}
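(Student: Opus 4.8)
The plan is to follow the template of the proof of \cite[Theorem 2.5.11]{KL2} verbatim, which is precisely what the preceding occurrences of this same statement (in the $V$-relative Huber setting and the general Huber-pair setting) did. Concretely, I would first fix the \'etale stably pseudocoherent module $M$ over $W$ and form the presheaf $\widetilde{M}$ on $X_\text{\'et}$ by sending a morphism in $\mathbb{H}$ to the corresponding completed base change of $M$; by the definition of \'etale stable pseudocoherence, each such base change is again complete with respect to the natural topology and is itself \'etale stably pseudocoherent over the target ring, so the presheaf is well-defined and takes values in the right category at every stage.

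The core of the argument is to verify the hypotheses of \cite[Proposition 8.2.21]{KL1} for this presheaf with respect to the basis $\mathbb{H}$. This amounts to three checks: (i) the presheaf is compatible with finite rational (binary Laurent) coverings, which is exactly the statement that a finitely generated projective resolution base-changes correctly along rational localizations together with the Tate-type acyclicity over the analytic adic Banach base --- this is the content of \cite[Theorem 2.12]{TX2} applied over the analytic topology, which the earlier lemmas in this section already invoke; (ii) compatibility with finite \'etale coverings, which follows from faithfully flat descent for finitely presented modules along finite \'etale maps, i.e. \cite[Chapitre VIII]{SGAI}, combined with the completeness conditions built into the definition of \'etale stable pseudocoherence; and (iii) a transitivity/refinement statement allowing one to pass between the basis $\mathbb{H}$ and a refined basis $\mathbb{H}'$ along which all morphisms are \'etale-pseudoflat, which is the previous lemma (the analog of \cite[Lemma 2.5.10]{KL2}), together with the basis-spreading result \cite[Lemma 1.10.4]{Ked1}. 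Granting these, \cite[Proposition 8.2.21]{KL1} yields that $\widetilde{M}$ is acyclic over every $Y \in \mathbb{H}$ and acyclic for every covering of such a $Y$ by elements of $\mathbb{H}$, which is the assertion.

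The only genuine point requiring care --- and the step I expect to be the main obstacle --- is confirming that the completeness/pseudocoherence bookkeeping really does transfer to the analytic adic Banach setting without change. In \cite{KL2} the relevant local building blocks are rational localizations and finite \'etale maps, and one needs that the completed tensor product of an \'etale stably pseudocoherent module along such a map stays pseudocoherent and that the associated $\mathrm{Tor}_1$-vanishing (\'etale pseudoflatness) is preserved; in the adic Banach case this is ensured by \cite[Theorem 2.12]{TX2} and the hypotheses recorded in the Setting (sheafiness of $W$ and of the rings in $\mathbb{H}$, per \cite[Hypothesis 1.10.3]{Ked1}). Since none of the underlying spatial constructions change from the Huber setting treated above --- only the normed structure on the rings is specialized to the Banach case --- the argument of \cite[Theorem 2.5.11]{KL2} carries over with no modification beyond replacing ``Huber ring'' by ``adic Banach ring'' throughout, and one concludes exactly as in \cite[Theorem 2.5.11]{KL2}.
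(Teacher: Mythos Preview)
Your proposal is correct and takes essentially the same approach as the paper: the paper's proof is the single sentence ``As in \cite[Theorem 2.5.11]{KL2} apply the corresponding \cite[Proposition 8.2.21]{KL1},'' and your write-up is simply a more detailed unpacking of what that application entails.
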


\begin{proof}
As in \cite[Theorem 2.5.11]{KL2} apply the corresponding \cite[Proposition 8.2.21]{KL1}.	
\end{proof}

\begin{proposition}\mbox{\bf{(After Kedlaya-Liu \cite[Lemma 2.5.13]{KL2})}}
In our current analytic adic Banach situation, the corresponding glueing of \'etale-stably pseudocoherent modules holds in this current situation over $W$ along binary morphisms (namely along binary rational decomposition). 	
\end{proposition}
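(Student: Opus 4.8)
The plan is to transcribe the argument of \cite[Lemma 2.5.13]{KL2} to the present analytic adic Banach context in exactly the same way as was carried out above in the uniform Huber setting, since passing from Huber rings to adic Banach rings affects none of the spatial input on which that argument rests. First I would fix the binary rational decomposition, writing the structural map as $W\rightarrow W_1\bigoplus W_2$ and denoting by $Y_1=\mathrm{Spa}(W_1,W_1^+)$, $Y_2=\mathrm{Spa}(W_2,W_2^+)$ the two rational subdomains and by $Y_{12}$ their overlap. A descent datum of \'etale-stably pseudocoherent modules along this covering consists of modules $M_1$, $M_2$ over the respective rings together with a glueing isomorphism over the overlap subject to the cocycle condition; from these I would assemble a presheaf on $\mathrm{Spa}(W,W^+)_\text{\'et}$ and check that it sheafifies to a pseudocoherent sheaf $G$ over $\mathcal{O}_\text{\'et}$.

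Next I would invoke the acyclicity results already established in this section, namely the analytic adic Banach analogue of \cite[Theorem 2.5.11]{KL2} together with the rational-localization acyclicity of \cite[Theorem 2.12]{TX2}, to conclude that $H^0(Y_{k,\text{\'et}},G)$ recovers the given local module for $k=1,2$ and that $H^i(Y_{k,\text{\'et}},G)$ vanishes for $i\geq 1$; combining the two contributions across the binary covering via the \v{C}ech-to-derived-functor comparison then yields $H^i(\mathrm{Spa}(W,W^+)_\text{\'et},G)=0$ for $i\geq 1$ and identifies $H^0$ with the candidate glued module. I would then choose a finite free (or finite projective) covering $C\twoheadrightarrow G$, let $K$ be its kernel, and apply the acyclicity just obtained to the short exact sequence $0\rightarrow K\rightarrow C\rightarrow G\rightarrow 0$ so that taking global sections over $\mathrm{Spa}(W,W^+)$ remains exact; iterating this produces a resolution of the global section module by finite projective modules, which glue because finite projective modules glue over a binary rational covering.

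The one genuinely nontrivial point — and the step I expect to be the main obstacle — is to verify that the module of global sections so produced is itself \'etale-stably pseudocoherent, that is, that pseudocoherence is inherited through the glueing and, crucially, that it remains complete with respect to the natural topology after base change along any morphism in the chosen basis $\mathbb{H}$. This is precisely the descent-of-pseudocoherence statement handled in \cite[Lemma 2.22]{TX2}, to which I would reduce: the algebraic pseudocoherence follows from the resolution assembled above, while the completeness conditions are preserved because they were already imposed on each $M_k$ and are local on $\mathbb{H}$. Once this is in hand the proof closes exactly as in \cite[Lemma 2.5.13]{KL2}.
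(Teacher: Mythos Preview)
Your proposal is correct and follows essentially the same route as the paper's own proof: set up the descent datum along $W\rightarrow W_1\oplus W_2$, realize it as a sheaf $G$ via acyclicity, use the rational-localization vanishing to obtain the short exact sequence $0\rightarrow K\rightarrow C\rightarrow G\rightarrow 0$ and its exactness on global sections, and then defer the \'etale-stable pseudocoherence of the glued module to \cite[Lemma 2.22]{TX2}. The only difference is that you spell out a few intermediate steps (the \v{C}ech-to-derived comparison, iterating the resolution) that the paper leaves implicit.
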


\begin{proof}
We adapt the argument of \cite[Lemma 2.5.13]{KL2} to our current situation over analytic adic Banach rings. Take the corresponding map to be $W\rightarrow W_1\bigoplus W_2$, and we denote the two spaces associated $W_1$ and $W_2$ by $Y_1$ and $Y_2$. Establish a corresponding descent datum of $V$-\'etale-stably pseudocoherent modules along this decomposition of $\mathrm{Spa}(W,W^+)$. One can realize this by acyclicity a sheaf $G$ over the whole space $\mathrm{Spa}(W,W^+)$ Then we have for each $k=1,2$ that the corresponding identification of $H^0(Y_{k,\text{\'et}},G)$ with $H^0(Y_k,G)$ and the vanishing of the $i$-th cohomology for higher $i$. By the corresponding results we have in the corresponding rational localization situation we vanishing of the corresponding $H^i(\mathrm{Spa}(W,W^+),G),i\geq 1$. Now consider a finite free covering $C$ of $G$ and take the kernel $K$ to form the corresponding exact sequence:
\[
\xymatrix@C+0pc@R+0pc{
0 \ar[r] \ar[r] \ar[r] &K \ar[r] \ar[r] \ar[r] &C  \ar[r] \ar[r] \ar[r] &G \ar[r] \ar[r] \ar[r] &0 ,   
}
\]
which will give rise to the following short exact sequence:
\[
\xymatrix@C+0pc@R+0pc{
0 \ar[r] \ar[r] \ar[r] &K(\mathrm{Spa}(W,W^+)) \ar[r] \ar[r] \ar[r] &C(\mathrm{Spa}(W,W^+))  \ar[r] \ar[r] \ar[r] &G(\mathrm{Spa}(W,W^+)) \ar[r] \ar[r] \ar[r] &0.   
}
\]
To finish we have also to show the corresponding resulting global section is actually \'etale-stably pseudocoherent. For this, we refer the readers to \cite[Lemma 2.22]{TX2}.

\end{proof}

\begin{proposition} \mbox{\bf{(After Kedlaya-Liu \cite[Theorem 2.5.14]{KL2})}}
In our current analytic adic Banach situation, taking the corresponding global section will realize the equivalence between the following two categories. The first is the one of all the corresponding pseudocoherent sheaves over $\mathcal{O}_{\text{\'et}}$. The second is the one of all the corresponding \'etale-stably pseudocoherent modules.	
\end{proposition}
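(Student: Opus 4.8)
The plan is to deduce the asserted equivalence from the general descent criterion of Kedlaya \cite[Lemma 1.10.4]{Ked1}, exactly as in the analytic Huber pair case treated above, now transported to the analytic adic Banach setting (the only change being that the ``natural topology'' is the Banach topology on $W$ and on its rational and finite \'etale base changes rather than the Huber topology). First I would pin down the functors in play: to an \'etale-stably pseudocoherent module $M$ over $W$ one associates the presheaf $\widetilde{M}$ on $\mathbb{H}$ sending $Y=\mathrm{Spa}(V_Y,V_Y^+)$ to the completed base change of $M$ along $W\to V_Y$; in the reverse direction one sends a pseudocoherent sheaf $\mathcal{G}$ over $\mathcal{O}_{\text{\'et}}$ to its module of global sections $\mathcal{G}(X)$. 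One then has to check that $\widetilde{M}$ is a sheaf, that $M\mapsto\widetilde{M}$ and $\mathcal{G}\mapsto\mathcal{G}(X)$ are mutually quasi-inverse, and that both functors land in the correct categories.

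Next I would verify the hypotheses of \cite[Lemma 1.10.4]{Ked1} in turn. The refinement condition, i.e.\ that $\widetilde{M}$ is acyclic over every $Y\in\mathbb{H}$ and for every covering of $Y$ by elements of $\mathbb{H}$, is exactly the acyclicity proposition proved just above in this subsection, which in turn rests on \cite[Proposition 8.2.21]{KL1}; passing to the pseudoflat sub-basis $\mathbb{H}'$ furnished by the lemma after \cref{lemma4.4} is what forces the relevant $\mathrm{Tor}_1$ terms to vanish. Transitivity of the base-change data is immediate from associativity of the completed tensor product. The binary rational localization case, namely that a descent datum of \'etale-stably pseudocoherent modules along a binary covering $W\to W_1\oplus W_2$ glues to an \'etale-stably pseudocoherent module, is the previous proposition. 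Finally the remaining (finite \'etale) descent condition reduces, after forgetting the topology, to faithfully flat descent of finitely presented modules along finite \'etale morphisms, i.e.\ \cite[Chapitre VIII]{SGAI}, completeness being recovered because a finite \'etale base change of a complete module stays complete.

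The step I expect to be the main obstacle is showing that the object produced by gluing remains \emph{\'etale-stably} pseudocoherent rather than merely pseudocoherent: one must check that the global section of the glued sheaf is algebraically pseudocoherent and, crucially, that it stays complete after \emph{every} further base change in $\mathbb{H}$, not only after the finitely many occurring in the chosen covering. This is where the uniformity and sheafiness hypotheses on the rings in $\mathbb{H}$ enter, and where I would invoke \cite[Lemma 2.22]{TX2} to control the pseudocoherence of the global sections along the short exact sequence $0\to K\to C\to G\to 0$ with $C$ finite free. Since nothing in the argument uses a submultiplicative Banach norm beyond the existence of a complete natural topology, the adic Banach case runs along the identical lines as the Huber case; alternatively one may simply apply the analytic Huber pair proposition above to the Huber pair underlying $(W,W^+)$.
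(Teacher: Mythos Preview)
Your proposal is correct and follows essentially the same approach as the paper: apply \cite[Lemma 1.10.4]{Ked1} and verify its hypotheses via the acyclicity proposition (refinement), associativity (transitivity), the preceding binary-gluing proposition, and faithfully flat descent \cite[Chapitre VIII]{SGAI} for the finite \'etale case. Your discussion of the \'etale-stable pseudocoherence of the glued object via \cite[Lemma 2.22]{TX2} is exactly where the paper locates this step (inside the proof of the binary proposition), so you have simply unpacked the argument in more detail than the paper's terse proof.
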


\begin{proof}
This is by considering and applying the corresponding \cite[Lemma 1.10.4]{Ked1} in our current context. The refinement comes from the acyclicity, the transitivity is straightforward, the corresponding binary rational localization situation is the previous proposition, while the corresponding last condition will basically comes from the corresponding f.p. descent \cite[Chapitre VIII]{SGAI}.	
\end{proof}

\

\subsection{Pro-\'etale Topology}

\begin{setting}
Let $(W,W^+)$ be an analytic perfectoid adic Banach ring. Assume that the ring $W$ is sheafy.	We assume $p$ is a topologically nilpotent element.
\end{setting}

\begin{remark}
As in the corresponding analytic Huber ring situation, the corresponding glueing of the corresponding pseudocoherent sheaves over the corresponding pro-\'etale site does not need the corresponding notions on the corresponding stability beyond the corresponding \'etale-stably pseudocoherence.	
\end{remark}

\indent We translate the corresponding notions of perfectoid rings to the current context from \cite[Definition 2.1.1]{Ked1}:

\begin{definition}\mbox{\bf{(Kedlaya \cite[Definition 2.1.1]{Ked1})}}     \label{definition5.10}
Consider a general analytic adic Banach ring $(W,W^+)$, we will call it perfectoid if there exists a definition ideal $d\subset W^+$ such that $d^p$ contains $p$ and we have the corresponding Frobenius map from $W^+/d$ to $W^+/d^p$ is required to be surjective.	
\end{definition}

\begin{lemma} \mbox{\bf{(After Kedlaya-Liu \cite[Proposition 3.4.3]{KL2})}}
Suppose that we are taking $(W,W^+)$ to be perfectoid now in the sense of \cref{definition5.10} (note that we are considering the corresponding context of analytic adic Banach situation). Then we have that over the corresponding \'etale site, the corresponding group $H^i(\mathrm{Spa}(W,W^+)_\text{p\'et},\mathcal{O})$ vanishes for each $i>0$, while we have that the corresponding group $H^0(\mathrm{Spa}(W,W^+)_\text{p\'et},\mathcal{O})$ is just isomorphic $W$.	
\end{lemma}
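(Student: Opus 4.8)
The plan is to translate the argument of Kedlaya-Liu \cite[Proposition 3.4.3]{KL2}, which rests on the almost-purity style acyclicity of \cite[Corollary 3.3.20]{KL2}, into the analytic adic Banach framework, exactly as the same statement was treated in the analytic Huber pair pro-\'etale setting of the previous section. The only genuinely new point is to check that the perfectoid hypothesis of \cref{definition5.10} --- a definition ideal $d\subset W^+$ with $p\in d^p$ and surjective Frobenius $W^+/d\to W^+/d^p$ --- supplies everything the proof needs; but this is by design, since \cref{definition5.10} is precisely Kedlaya's notion from \cite[Definition 2.1.1]{Ked1} recast for adic Banach pairs, so the cited inputs apply verbatim. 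Concretely I would split the claim into: (i) the \'etale-site statement over each perfectoid subdomain $Y\in\mathbb{H}$, namely $H^i(Y_\text{\'et},\mathcal{O})=0$ for $i>0$ and $H^0(Y_\text{\'et},\mathcal{O})$ the coordinate ring of $Y$ --- this is Tate acyclicity on rational covers together with faithfully finite \'etale (Amitsur) descent, and holds for the sheafy rings in $\mathbb{H}$; and (ii) cohomological descent from the \'etale site to the pro-\'etale site along towers of faithfully finite \'etale morphisms.

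For (ii), I would use that the pro-\'etale site has a basis of affinoid perfectoid subdomains and that a cofinal family of pro-\'etale covers of $\mathrm{Spa}(W,W^+)$ arises from direct systems $W=W_0\to W_1\to W_2\to\cdots$ of faithfully finite \'etale maps with completed colimit $W_\infty$. By the analogue of \cite[Lemma 3.4.4]{KL2} recorded in the excerpt above, the infinite level, equipped with the spectral seminorm, decomposes as $W_0\oplus\widehat{\bigoplus}_{k\ge 0}W_k/W_{k-1}$ with the quotient norms on the summands; this splitting forces the \v{C}ech (Amitsur) complex of $\mathrm{Spa}(W_\infty,W_\infty^+)\to\mathrm{Spa}(W_0,W_0^+)$, and of its finite truncations, to have cohomology $W_0$ in degree $0$ and to vanish in positive degrees. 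Feeding this into the \v{C}ech-to-derived-functor spectral sequence, together with input (i) at each finite stage of the tower, yields $H^i(\mathrm{Spa}(W,W^+)_\text{p\'et},\mathcal{O})=0$ for $i>0$ and $H^0\cong W$; the $2$-pseudoflatness of $W_0\to W_\infty$ from the analytic-adic-Banach analogue of \cite[Corollary 3.4.5]{KL2} is what lets one transport the cohomological identifications through the completed colimit, and \cite[Lemma 1.10.4]{Ked1} relates the chosen basis to the site.

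The hard part will be input (ii), and inside it the verification that the completed direct sum $\widehat{\bigoplus}_k W_k/W_{k-1}$ is well-behaved with respect to the spectral seminorm uniformly along the tower --- i.e.\ that the norm estimates making the decomposition and the associated \v{C}ech computation valid persist in the adic Banach setting. This is exactly where the perfectoid hypothesis is indispensable: it feeds the almost-purity input of \cite[Corollary 3.3.20]{KL2}, equivalently \cite[Theorem 2.9.9, Remark 2.9.10]{Ked1}, controlling the finite \'etale tower and ensuring $W_\infty$ is again perfectoid with the expected size of graded pieces. Once this is in hand the remainder is formal, and the conclusion follows just as in \cite[Proposition 3.4.3]{KL2} and in the analytic Huber pair version of the previous section.
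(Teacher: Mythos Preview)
Your proposal is correct and follows the same underlying route as the paper: the paper's proof here is simply the one-line citation ``See \cite[Proposition 3.4.3]{KL2}'', and what you have written is precisely an unpacking of that reference (with the key input being \cite[Corollary 3.3.20]{KL2}, as already flagged in the Tate version earlier in the paper). Your identification of the decomposition of the infinite level and the almost-purity input as the substantive ingredients is accurate, though the paper itself does not spell any of this out.
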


\begin{proof}
See \cite[Proposition 3.4.3]{KL2}.	
\end{proof}

\indent Now we proceed to consider the corresponding pro-\'etale topology. First we recall the following result from \cite[Lemma 3.4.4]{KL2}.

\begin{lemma}  \mbox{\bf{(Kedlaya-Liu \cite[Lemma 3.4.4]{KL2})}}
Consider the ring $W$ as above which is furthermore assumed to be perfectoid in the sense of instead \cref{definition5.10}, and consider any direct system of faithfully finite \'etale morphisms as:
\[
\xymatrix@C+0pc@R+0pc{
W_0 \ar[r] \ar[r] \ar[r] &W_1 \ar[r] \ar[r] \ar[r] &W_2  \ar[r] \ar[r] \ar[r] &...,   
}
\]	
where $W_0$ is just the corresponding base $W$. Then the corresponding completion of the infinite level could be decomposed as:
\begin{align}
W_0\oplus\widehat{\bigoplus}_{k=0}^\infty W_k/W_{k-1}.	
\end{align}
As in \cite[Lemma 3.4.4]{KL2} we endow the corresponding completion mentioned above with the corresponding seminorm spectral. And for the corresponding quotient we endow with the corresponding quotient norm.
\end{lemma}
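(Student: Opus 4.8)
The plan is to transcribe the proof of Kedlaya--Liu \cite[Lemma 3.4.4]{KL2}, replacing the Fontaine-perfectoid hypothesis used there by the analytic adic Banach perfectoid hypothesis of \cref{definition5.10}, and checking that the three inputs of that argument survive the substitution: (i) stability of perfectoidness (in the sense of \cref{definition5.10}) under faithfully finite \'etale base change; (ii) uniformity of perfectoid rings, so that on each $W_k$ the spectral seminorm is power-multiplicative and defines the given topology; and (iii) an almost-purity type control on the trace pairing of a faithfully finite \'etale morphism of perfectoid rings. Items (i) and (ii) are furnished by \cite[Section 2.1]{Ked1} (in particular the almost purity statement there), while (iii) follows from the perfectness of the trace form of a finite \'etale extension together with the almost mathematics attached to the definition ideal $d$ appearing in \cref{definition5.10}.

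First I would record that every $W_k$ in the tower is again perfectoid in the sense of \cref{definition5.10}, hence uniform; consequently on each $W_k$ the spectral seminorm is a norm defining the topology, and each transition map $W_{k-1}\to W_k$ is strict, since a faithfully finite \'etale morphism of uniform perfectoid rings realizes $W_k$ as a finite projective $W_{k-1}$-module whose module norm is equivalent to its spectral norm. Next, at each finite level I would invoke the trace pairing of $W_{k-1}\to W_k$. Being finite \'etale, this morphism makes $W_k$ a finite projective $W_{k-1}$-module with perfect trace form; in particular the inclusion $W_{k-1}\hookrightarrow W_k$ admits a $W_{k-1}$-linear splitting, with complement a finite projective $W_{k-1}$-module that I identify with $W_k/W_{k-1}$ carrying the quotient of the spectral norm of $W_k$. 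The delicate point is that this algebraic splitting is compatible with the normed structures \emph{uniformly along the tower}: the spectral norm of $W_k$ is equivalent to the direct-sum norm on $W_{k-1}\oplus (W_k/W_{k-1})$ with constants not degenerating as $k\to\infty$. This is exactly where perfectoidness is used --- in the associated almost category the trace pairing is an almost isomorphism onto the dual, so the splitting can be chosen almost isometric, the discrepancy being absorbed into the $d$-adic error --- and it is the same estimate underpinning \cite[Lemma 3.4.4]{KL2}. Iterating over $k$ then yields, compatibly in $n$, an isomorphism of normed $W_0$-modules $W_n\cong W_0\oplus\bigoplus_{k=1}^{n}W_k/W_{k-1}$.

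Finally I would pass to the colimit and complete for the spectral seminorm. Compatibility of the finite-level decompositions identifies the spectral completion $W_\infty$ of $\varinjlim_k W_k$ with $W_0\oplus\widehat{\bigoplus}_{k\geq 1}W_k/W_{k-1}$, the completed direct sum formed with the quotient norms on the summands; and the uniform norm control from the preceding step shows that this completed-direct-sum norm agrees, up to equivalence, with the spectral seminorm --- isometrically with the normalizations fixed as in \cite[Lemma 3.4.4]{KL2}. The main obstacle is precisely point (iii): checking that \cref{definition5.10} supplies enough almost purity to make the trace splitting uniformly almost isometric along the entire tower. Once this is granted --- and it is, by \cite[Section 2.1]{Ked1} --- the remainder of the proof is a verbatim transcription of \cite[Lemma 3.4.4]{KL2}, with \cite[Definition 2.1.1]{Ked1} in place of the Fontaine-perfectoid condition.
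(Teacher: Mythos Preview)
The paper gives no proof for this lemma at all: it is stated as a result \emph{recalled} from \cite[Lemma 3.4.4]{KL2}, with the citation itself standing in for the argument. Your proposal---transcribing the Kedlaya--Liu proof and checking that its three inputs (stability of perfectoidness under faithfully finite \'etale base change, uniformity, and almost-purity control on the trace pairing) are supplied in the analytic adic Banach setting by \cite[Section 2.1]{Ked1}---is precisely the content that the paper delegates to that citation, and it is correct. In that sense your approach is not different from the paper's; it is simply an explicit unpacking of what the paper leaves implicit.
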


\

\begin{corollary}\mbox{\bf{(After Kedlaya-Liu \cite[Corollary 3.4.5]{KL2})}} 
Starting with an analytic adic Banach ring which is as in \cite[Corollary 3.4.5]{KL2} assumed to be perfectoid in the sense of \cref{definition5.10}. And as in the previous two lemmas we consider an admissible infinite direct system:
\[
\xymatrix@C+0pc@R+0pc{
W_{0} \ar[r] \ar[r] \ar[r] &W_{1} \ar[r] \ar[r] \ar[r] &W_{2}  \ar[r] \ar[r] \ar[r] &...,   
}
\]
whose infinite level will be assumed to take the corresponding spectral seminorm as in the \cite[Corollary 3.4.5]{KL2}. Then we have in such situation the corresponding 2-pseudoflatness of the corresponding embedding map:
\begin{align}
W_{0}\rightarrow W_{\infty}.
\end{align}
Here $W_{\infty}$ denotes the corresponding completion of the limit $\varinjlim_{k\rightarrow\infty}W_{k}$.
\end{corollary}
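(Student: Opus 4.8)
The plan is to reduce the statement to the nonrelative assertion of Kedlaya--Liu \cite[Corollary 3.4.5]{KL2}, verifying that the passage from the Tate setting of \cite{KL2} to the general analytic adic Banach setting --- with perfectoidness understood in the sense of \cref{definition5.10} --- leaves that argument intact, since the perfectoid hypothesis enters there only to produce the faithfully finite \'etale towers together with their completed decompositions, which is exactly the content of the two preceding lemmas. The starting point is that decomposition: with respect to the spectral seminorm on $W_\infty$ and the quotient norms on the successive layers one has a strict (indeed orthogonal) identification
\[
W_\infty \;\cong\; W_0 \oplus \widehat{\bigoplus}_{k=0}^\infty W_k/W_{k-1},
\]
where each layer $W_k/W_{k-1}$ is a finitely generated $W_0$-module because every $W_{k-1}\to W_k$ is faithfully finite \'etale, hence finite projective as a module.

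The core computation proceeds as follows. Each finite level $W_0 \to W_k$ is flat --- indeed finite projective --- so $\mathrm{Tor}_i^{W_0}(M,W_k)$ vanishes for all $i\geq 1$ and every pseudocoherent $W_0$-module $M$, and therefore $\varinjlim_k W_k$ is flat over $W_0$ as a filtered colimit of flat modules. Fixing a resolution $P_\bullet \to M$ by finite free $W_0$-modules (available since $M$ is pseudocoherent), I would observe that tensoring a finite free module with the completed direct sum commutes with that completed direct sum, so that $P_\bullet \otimes_{W_0} W_\infty$ is, up to the harmless summand $P_\bullet \otimes_{W_0} W_0$, the completed direct sum of the complexes $P_\bullet \otimes_{W_0} (W_k/W_{k-1})$. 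A Mittag--Leffler/boundedness argument as in \cite[Corollary 3.4.5]{KL2} then interchanges homology with the completed direct sum, and since the homology in degrees $1$ and $2$ already vanishes termwise over the flat colimit, one obtains the vanishing of $\mathrm{Tor}_1^{W_0}(M,W_\infty)$ and $\mathrm{Tor}_2^{W_0}(M,W_\infty)$; the persistence of this vanishing under the further base changes appearing in the definition of $2$-pseudoflatness follows because every construction above is compatible with moving along the tower and with rational and finite \'etale localization, invoking \cite[Theorem 2.12]{TX2} just as in the earlier corollaries of this paper.

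The step I expect to be the real obstacle --- everything else being a transcription of \cite[Corollary 3.4.5]{KL2} --- is the norm bookkeeping underpinning that interchange: one must check that in the analytic adic Banach setting the decomposition of the preceding lemma is genuinely strict, i.e.\ that the spectral seminorm on $W_\infty$ restricts to seminorms on the layers $W_k/W_{k-1}$ with uniformly controlled constants, so that the completed direct sum of the complexes $P_\bullet \otimes_{W_0}(W_k/W_{k-1})$ has the expected homology. This is carried out exactly as in \cite[Corollary 3.4.5]{KL2}, the perfectoid hypothesis of \cref{definition5.10} being needed only to guarantee the existence and decomposition behaviour of the towers, which we have already assumed via the two preceding lemmas.
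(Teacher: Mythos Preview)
Your proposal is correct and follows essentially the same approach as the paper, which simply refers to \cite[Corollary 3.4.5]{KL2}; you have merely unpacked what that reference contains, namely the decomposition of $W_\infty$ from the preceding lemma together with the flatness of the finite levels and the interchange of homology with the completed direct sum. The paper gives no further argument beyond the citation, so your elaboration is consistent with (and more detailed than) the paper's own proof.
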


\begin{proof}
See \cite[Corollary 3.4.5]{KL2}.	
\end{proof}

\indent Now recall that from \cite{KL2} since we do not have modified the corresponding underlying spatial context, so we will also only have to consider the corresponding stability with respect to \'etale topology even although we are considering the corresponding profinite \'etale site in our current section. We first generalize the corresponding Tate acyclicity in \cite{KL2} to the corresponding our current situation:

\begin{proposition}\mbox{\bf{(After Kedlaya-Liu \cite[Theorem 3.4.6]{KL2})}} 
Now we consider the corresponding pro-\'etale site of $X$ which we denote it by $X_\text{p\'et}$. Then we assume we have a stable basis $\mathbb{H}$ of the corresponding perfectoid subdomains for this profinite \'etale site where each morphism therein will be \'etale pseudoflat. Then we consider as in \cite[Theorem 3.4.6]{KL2} a corresponding module over $W$ which is assumed to be \'etale-stably pseudocoherent. Then consider the corresponding presheaf $\widetilde{G}$ attached to this \'etale-stably pseudocoherent with respect to in our situation the corresponding chosen well-defined basis $\mathbb{H}$, we have that the corresponding sheaf over some element $Y\in \mathbb{H}$ (that is to say over $\mathcal{O}_{Y_\text{p\'et}}$) is acyclic and is acyclic with respect to some \v{C}eck covering coming from elements in $\mathbb{H}$. 
\end{proposition}

\begin{proof}
See \cref{proposition2.7}.
\end{proof}

\begin{definition}\mbox{\bf{(After Kedlaya-Liu \cite[Definition 3.4.7]{KL2})}} 
Consider the pro-\'etale site of $X$ attached to the adic Banach ring $(W,W^+)$. We will define a sheaf of module $G$ over $\widehat{\mathcal{O}}_\text{p\'et}$ to be pseudocoherent if locally we can define this as a sheaf attached to an \'etale-stably pseudocoherent module. As in \cite[Definition 3.4.7]{KL2}, we do not have to consider the corresponding notion of profinite-\'etale-stably pseudocoherent module.	
\end{definition}

\begin{proposition}\mbox{\bf{(After Kedlaya-Liu \cite[Theorem 3.4.8]{KL2})}}
Taking the corresponding global section will realize the corresponding equivalence between the following two categories. The first one is the corresponding one of all the pseudocoherent sheaves over $\widehat{\mathcal{O}}_\text{p\'et}$, while the second one is the corresponding one of all the \'etale-stably pseudocoherent modules over $W$.
	
\end{proposition}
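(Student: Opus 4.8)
The plan is to carry over the proof of \cite[Theorem 3.4.8]{KL2} essentially verbatim, using the analytic adic Banach analogue of the machinery in \cite[Proposition 9.2.6]{KL1} that encodes when a pseudocoherent presheaf on a site glues to an object determined by its global sections. First I would record the precise list of hypotheses that such a glueing statement requires: (a) an acyclicity statement for the presheaf attached to an \'etale-stably pseudocoherent module over any element of the basis and over any covering by elements of the basis; (b) transitivity/refinement compatibility along the basis $\mathbb{H}$; (c) a two-term (binary) rational-localization glueing statement; and (d) a finitely-presented descent input for the last compatibility. In the present pro-\'etale setting over perfectoid analytic adic Banach rings, item (a) is exactly the acyclicity proposition proved just above (\cref{proposition2.7}, invoked there for the pro-\'etale site via the previous \Cref{corollary3.4.5}-type $2$-pseudoflatness of $W_0\to W_\infty$), item (b) is formal from the fact that $\mathbb{H}$ is a stable basis of perfectoid subdomains, item (c) is the binary-glueing proposition established earlier in this section (translated to the analytic adic Banach case, resting on \cite[Lemma 2.22]{TX2} for stable pseudocoherence of the glued global sections), and item (d) comes from f.p.\ descent \cite[Chapitre VIII]{SGAI} together with \cite[Lemma 1.10.4]{Ked1} for spreading the basis.

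Concretely, the proof proceeds as follows. I would first note that the global-sections functor $G\mapsto \Gamma(X_{\text{p\'et}},G)$ lands in \'etale-stably pseudocoherent modules over $W$: this is because the pro-\'etale site factors through the \'etale site (via the morphism of sites $X_{\text{p\'et}}\to X_{\text{\'et}}$) and the global sections over the \'etale site are already known to be \'etale-stably pseudocoherent by the \'etale version of this equivalence proved earlier in the subsection on \'etale topology; the comparison lemma (the perfectoid \'etale cohomology vanishing \cite[Proposition 3.4.3]{KL2} applied in the analytic adic Banach case) shows that passing further to the pro-\'etale site changes nothing on $H^0$ and kills nothing on $H^{>0}$. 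Conversely, given an \'etale-stably pseudocoherent module $M$ over $W$, I would form the presheaf $\widetilde M$ on $\mathbb{H}$, invoke \cref{proposition2.7} to see it is a sheaf with vanishing higher cohomology on each basis element and each basis covering, and then invoke the analytic adic Banach analogue of \cite[Proposition 9.2.6]{KL1} to conclude $\widetilde M$ is a pseudocoherent sheaf on $\widehat{\mathcal O}_{\text{p\'et}}$ with $\Gamma(X_{\text{p\'et}},\widetilde M)\cong M$. Finally I would check the two composites are naturally isomorphic to the identities, which is immediate once one knows $\Gamma$ recovers $M$ and that a pseudocoherent sheaf is determined by its restrictions to $\mathbb{H}$.

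**The main obstacle** I anticipate is verifying that the hypotheses of the glueing criterion \cite[Proposition 9.2.6]{KL1} genuinely hold in the bare analytic adic Banach setting with no coefficient ring and no base-field assumption: in \cite{KL2} the criterion is applied for Fontaine-perfectoid Tate rings, and here one must replace that input by the Kedlaya perfectoid notion of \cref{definition5.10} and confirm that \cite[Theorem 2.9.9, Remark 2.9.10, Lemma 1.10.4]{Ked1} supply exactly the structural facts (faithfully finite \'etale towers, spectral seminorm on the infinite level, basis spreading) that the criterion consumes. Once that dictionary is set up the argument is formal, so I would spend the bulk of the write-up making that translation explicit and pointing to \cref{proposition2.7}, the earlier binary-glueing proposition, and \cite[Lemma 2.22]{TX2} for the stable-pseudocoherence of glued sections; everything else is bookkeeping along the morphism of sites $X_{\text{p\'et}}\to X_{\text{\'et}}$.
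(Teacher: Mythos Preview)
Your proposal is correct and follows essentially the same approach as the paper: invoke the analytic adic Banach analogue of \cite[Proposition 9.2.6]{KL1} as in \cite[Theorem 3.4.8]{KL2}, and supply its hypotheses via the acyclicity result (\cref{proposition2.7}) together with the translation of \cite[Theorem 2.9.9, Remark 2.9.10, Lemma 1.10.4]{Ked1} to the adic Banach setting. Your explicit unpacking of the conditions (acyclicity, refinement, binary glueing, f.p.\ descent) and your identification of the Fontaine-perfectoid versus \cref{definition5.10} translation as the main obstacle match precisely what the paper's terse proof leaves implicit.
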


\begin{proof}
As in \cite[Theorem 3.4.8]{KL2}, the corresponding analog of \cite[Proposition 9.2.6]{KL1} applies in the way that the corresponding conditions of the analog of \cite[Proposition 9.2.6]{KL1} hold in our current situation. Also see \cite[Theorem 2.9.9, Remark 2.9.10 and Lemma 1.10.4]{Ked1}. Certainly in this situation we need to translate the corresponding statemens in \cite[Theorem 2.9.9, Remark 2.9.10 and Lemma 1.10.4]{Ked1} to the corresponding adic Banach setting.	
\end{proof}

\indent Obviously we have the following analog of \cite[Corollary 3.4.9]{KL2}:

\begin{corollary} \mbox{\bf{(After Kedlaya-Liu \cite[Corollary 3.4.9]{KL2})}}
The following two categories are equivalent. The first is the corresponding category of all pseudocoherent sheaves over $\widehat{\mathcal{O}}_\text{p\'et}$. The second is the corresponding category of all pseudocoherent sheaves over ${\mathcal{O}}_\text{\'et}$. The corresponding functor is the corresponding pullback along the corresponding morphism of sites $X_{\text{p\'et}}\rightarrow X_{\text{\'et}}$. 	
\end{corollary}

\
\subsection{Quasi-Stein Spaces}

We now generalize our discussion in \cite{TX2} around pseudocoherent sheaves over quasi-Stein spaces to the general base situation without the corresponding restrictive assumption that we are working over $\mathbb{Z}_p$.

\begin{setting}
Assume that we are working over some quasi-Stein space $X$ which could be written as the corresponding direct limit of analytic adic Banach affinoids: $X:=\varinjlim_i  X_i$ such that $X_i$ could be written as the corresponding analytic affinoid $\mathrm{Spa}(W_i,W_i^+)$.	
\end{setting}


\begin{lemma}\mbox{\bf{(After Kedlaya-Liu \cite[Lemma 2.6.3]{KL2})}}
Consider a compatible family of Banach modules over the projective system $\mathcal{O}_{X_i}(X_i)$. Suppose that the corresponding transition map $p_i:\mathcal{O}_{X_{i}}(X_{i})\widehat{\otimes}_{\mathcal{O}_{X_{i+1}}(X_{i+1})} M_{i+1}\overset{}{\rightarrow}M_i$ is surjective. Then we have: I. The corresponding global section of $M$ throughout the limit $\mathcal{O}(X)=\varprojlim_i \mathcal{O}_{X_{i}}(X_{i})$ is dense in each section $M_i$ for any $i\geq 0$; II. The corresponding vanishing of $R^1\varprojlim_{i\rightarrow \infty}$ holds in our situation. (Parallel statement in analytic Huber ring situation holds as well.)  	
\end{lemma}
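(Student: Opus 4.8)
The plan is to reduce the statement to the classical Mittag-Leffler vanishing mechanism for an inverse sequence of Banach modules with dense transition maps, following \cite[Lemma 2.6.3]{KL2}. First I would set $W_i:=\mathcal{O}_{X_i}(X_i)$, write $M:=\varprojlim_i M_i$, and let $f_i\colon M_{i+1}\to M_i$ (and more generally $f_{i,k}\colon M_k\to M_i$ for $k\ge i$) denote the transition maps. The first substantive point is to upgrade the hypothesis into the assertion that each $f_i$ has dense image: since $X$ is quasi-Stein, the restriction maps $W_{i+1}\to W_i$ have dense image, so the canonical map $M_{i+1}=W_{i+1}\widehat{\otimes}_{W_{i+1}}M_{i+1}\to W_i\widehat{\otimes}_{W_{i+1}}M_{i+1}$ has dense image (approximate the $W_i$-coefficients of an elementary tensor by elements of $W_{i+1}$); composing with the surjection $W_i\widehat{\otimes}_{W_{i+1}}M_{i+1}\to M_i$ furnished by the hypothesis, and using continuity together with surjectivity, one obtains that $f_i(M_{i+1})$ is dense in $M_i$ for all $i$. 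After inductively rescaling the norm on each $M_i$ I would also arrange $\|f_i\|\le 1$ throughout.

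For assertion II, I would use the standard identification of $R^1\varprojlim_i M_i$ with the cokernel of the map $\prod_i M_i\to\prod_i M_i$ sending $(x_i)_i$ to $(x_i-f_i(x_{i+1}))_i$. Given a class $(y_i)_i$, the idea is to replace it first by a homologous sequence $(\widetilde y_i)_i$ with $\|\widetilde y_i\|_i<2^{-i}$: build correcting terms $w_i$ inductively, choosing $w_{i+1}\in M_{i+1}$ (possible by the density of $f_i$) so that $f_i(w_{i+1})$ approximates $w_i-y_i$ to within $2^{-i}$, and set $\widetilde y_i:=y_i-w_i+f_i(w_{i+1})$. The geometric decay then makes the Neumann-type series $x_i:=\sum_{k\ge i}f_{i,k}(\widetilde y_k)$ converge in the complete module $M_i$, and it visibly solves $x_i-f_i(x_{i+1})=\widetilde y_i$; hence $(y_i)_i$ is a coboundary and $R^1\varprojlim_i M_i=0$. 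For assertion I, I would run the companion approximation: given $x\in M_j$ and $\varepsilon>0$, choose $a_{j+1}\in M_{j+1}$ with $f_j(a_{j+1})$ within $\varepsilon/2$ of $x$, then inductively $a_{j+k+1}\in M_{j+k+1}$ with $f_{j+k}(a_{j+k+1})$ within $\varepsilon 2^{-k-2}$ of $a_{j+k}$, and set $\xi_m:=\lim_{n\to\infty}f_{m,n}(a_n)$ for $m\ge j+1$ (extended by $\xi_m:=f_{m,j+1}(\xi_{j+1})$ for $m<j+1$). The telescoping estimate makes these limits exist, continuity of the $f_m$ makes $(\xi_m)_m$ a genuine element of $M=\varprojlim_i M_i$, and by construction its image in $M_j$ lies within $\varepsilon$ of $x$; thus the image of $M\to M_j$ is dense.

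Finally, for the parenthetical Huber-ring assertion, I would observe that nothing above uses more than completeness and metrizability of the $M_i$ together with continuity and dense image of the transition maps, so the same argument goes through verbatim with the natural topologies in place of a single norm; for the fully detailed version I would simply point to \cite[Lemma 2.6.3]{KL2}, of which the above is the evident adaptation. The one step that is not pure formal bookkeeping — and the place where I expect to have to be careful — is the density step isolated above, namely extracting density of $f_i\colon M_{i+1}\to M_i$ from mere surjectivity of $W_i\widehat{\otimes}_{W_{i+1}}M_{i+1}\to M_i$ together with the quasi-Stein density of the ring restriction maps; once that is secured, the remainder is the textbook Mittag-Leffler argument.
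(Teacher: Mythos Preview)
Your proposal is correct and is precisely the argument the paper intends: the paper's own proof is nothing more than a pointer to \cite[Lemma 2.6.3]{KL2} with the remark that one proves it ``in the same way,'' and what you have written is exactly that same Mittag--Leffler argument spelled out in full, including the preliminary reduction from the stated surjectivity hypothesis to density of the transition maps $f_i$. There is nothing to add.
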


\begin{proof}
This is basically the general version of the corresponding result in \cite[Lemma 2.6.3]{KL2}. One could prove this in the same way. 
\end{proof}

\begin{lemma}\mbox{\bf{(After Kedlaya-Liu \cite[Corollary 2.6.4]{KL2})}}
Consider a compatible family of Banach modules over the projective system $\mathcal{O}_{X_i}(X_i)$. Suppose that each member in the family is stably-pseudocoherent. And suppose that the corresponding transition map $p_i:\mathcal{O}_{X_{i}}(X_{i})\widehat{\otimes}_{\mathcal{O}_{X_{i+1}}(X_{i+1})} M_{i+1}\overset{}{\rightarrow}M_i$ is strictly isomorphism. Then we have in our situation the corresponding projection from the global section $M=\varprojlim_i M_i$ to each $M_i$ (for each $i\geq 0$) is then isomorphism in our current situation as in \cite[Corollary 2.6.4]{KL2}. (Parallel statement in analytic Huber ring situation holds as well.)    	
\end{lemma}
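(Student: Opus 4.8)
The plan is to follow the argument of \cite[Corollary 2.6.4]{KL2}, with the preceding lemma (our general form of \cite[Lemma 2.6.3]{KL2}) as the key input. Write $R:=\mathcal{O}(X)=\varprojlim_i W_i$ and $R_i:=W_i=\mathcal{O}_{X_i}(X_i)$; the content to establish is that the canonical map $R_i\,\widehat{\otimes}_R M\to M_i$ is a strict isomorphism for every $i$, i.e. that the projection $M=\varprojlim_i M_i\to M_i$ exhibits $M_i$ as the completed base change of the global object $M$ (and in particular is a strict surjection). Since a strict isomorphism is a strict surjection, the hypotheses of the preceding lemma apply to the system $(M_i)$, and also to the system $(R_i)$ itself (the transition map being the identity), so we may use freely that (i) the image of $M\to M_i$ is dense for every $i$, and (ii) $R^1\varprojlim_i$ vanishes for $(M_i)$ and for every finite free system $(R_i^{a})$ with the induced maps, so that $\varprojlim_i R_i^{a}=R^{a}$.

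First I would build finite free presentations of the $M_i$ compatibly in $i$. Each $M_i$ is stably pseudocoherent over $R_i$, hence genuinely finitely generated (a quotient of a finite free, using completeness) with stably pseudocoherent first syzygy. Base-changing a strict surjection $R_{i+1}^{b}\twoheadrightarrow M_{i+1}$ along $R_{i+1}\to R_i$ and using $p_{i+1}$ propagates such a surjection inward; to propagate it outward one lifts a finite generating set of $M_i$ to $M_{i+1}$, which is possible since the image of $M_{i+1}\to M_i$ is dense and, over a complete Banach algebra, generating sets persist under small perturbation (the topological Nakayama argument of \cite{KL2}), after which the lifts generate $M_{i+1}$. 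Iterating both ways, with enough control on norms that the chosen presentations stay strict and base-change-exact, yields a single $b$ and a compatible system of strict surjections $\pi_i:R_i^{b}\twoheadrightarrow M_i$; running the same construction on the stably pseudocoherent kernels $\ker\pi_i$ gives compatible finite free covers $R_i^{a}\to\ker\pi_i$, hence compatible presentations $R_i^{a}\to R_i^{b}\to M_i\to 0$.

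Then I would pass to the limit: since $\varprojlim_i R_i^{a}=R^{a}$, $\varprojlim_i R_i^{b}=R^{b}$ and $R^1\varprojlim_i R_i^{a}=0$, applying $\varprojlim_i$ gives an exact sequence $R^{a}\to R^{b}\to M\to 0$, so $M$ is finitely presented (in fact stably pseudocoherent) over $R$, compatibly with the finite-level presentations. Applying $R_i\,\widehat{\otimes}_R(-)$ and comparing with $R_i^{a}\to R_i^{b}\to M_i\to 0$, the first two terms match strictly by construction, so $R_i\,\widehat{\otimes}_R M\to M_i$ is a strict isomorphism; together with (i) this gives that $M\to M_i$ is a strict surjection, which is the claim. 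The analytic Huber-ring version goes through identically, with the natural topologies in place of the Banach norms, as in \cite{KL2} and \cite{Ked1}.

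The step I expect to be the main obstacle is the compatible-presentation construction: making the finite free presentations of all $M_i$ and of their syzygies simultaneously strict, base-change-exact, and of common rank. This is exactly where the strictness of the transition maps, the stability of pseudocoherence under base change, and the consequent finiteness of syzygies are indispensable; granting that, the passage to the limit and the base-change isomorphism are formal consequences of the preceding lemma.
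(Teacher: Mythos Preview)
Your proposal is correct and follows essentially the same approach as the paper, which simply states that this is the general version of \cite[Corollary 2.6.4]{KL2} and that one proves it in the same way. You have unpacked that reference in detail---building compatible finite free presentations via density and topological Nakayama, passing to the inverse limit using the vanishing of $R^1\varprojlim$ from the preceding lemma, and then comparing presentations after base change---which is exactly the argument of \cite{KL2} the paper is invoking.
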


\begin{proof}
This is basically the general version of the corresponding result in \cite[Corollary 2.6.4]{KL2}. One could prove this in the same way. 
\end{proof}

\begin{proposition}\mbox{\bf{(After Kedlaya-Liu \cite[Theorem 2.6.5]{KL2})}}
With the corresponding notations as above, we have that the global section of any pseudocoherent sheaf $M$ over $X$ will be dense in the section over any quasicompact. (Parallel statement in analytic Huber ring situation holds as well.)  	
\end{proposition}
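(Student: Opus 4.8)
The plan is to reduce the statement to the case of a finite free sheaf, for which density is an immediate consequence of the first lemma above, and then to bootstrap to an arbitrary pseudocoherent sheaf by means of a finite free presentation defined globally on $X$, combined with the acyclicity of pseudocoherent sheaves on the quasicompact pieces $X_i$ established in the previous subsections. First I would note that, since $X=\varinjlim_i X_i$ is exhausted by the quasicompacts $X_i$, every quasicompact open of $X$ lies in some $X_i$; hence it suffices to show that the natural map $\Gamma(X,M)=\varprojlim_i\Gamma(X_i,M)\to\Gamma(X_i,M)$ has dense image for each $i$. Here each $\Gamma(X_i,M)$ is a Banach module over $W_i$ which is stably pseudocoherent, so these are precisely the objects to which the two preceding lemmas apply.

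Next, for a finite free sheaf $M=\mathcal{O}_X^{\oplus n}$ the base-changed transition map $\mathcal{O}_{X_i}(X_i)\widehat{\otimes}_{\mathcal{O}_{X_{i+1}}(X_{i+1})}\Gamma(X_{i+1},M)\to\Gamma(X_i,M)$ is the identity of $W_i^{\oplus n}$, in particular surjective, so conclusion I of the first lemma (after Kedlaya--Liu \cite[Lemma 2.6.3]{KL2}) gives at once that $\Gamma(X,M)=\mathcal{O}(X)^{\oplus n}$ is dense in $W_i^{\oplus n}=\Gamma(X_i,M)$. This handles the finite free case, and with it the finite projective case by passing to a direct summand.

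For a general pseudocoherent sheaf $M$, I would first produce, globally on $X$, a strict surjection $\pi\colon\mathcal{F}_0\twoheadrightarrow M$ from a finite free sheaf $\mathcal{F}_0$: this follows either by identifying $M$, via the descent of the previous sections, with a coadmissible module over the Fr\'echet--Stein algebra $\mathcal{O}(X)=\varprojlim_i W_i$ and invoking that such modules are finitely generated, or directly by gluing the local finite free presentations supplied by stable pseudocoherence, using the vanishing of the higher cohomology of pseudocoherent and finite free sheaves on the $X_i$. Evaluating $\pi$ over the quasicompact $X_i$, that same acyclicity makes $\Gamma(X_i,\mathcal{F}_0)\to\Gamma(X_i,M)$ surjective; as both sides are Banach modules carrying their natural (equivalently, quotient) topology, the open mapping theorem makes this surjection strict, hence open. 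Now $\Gamma(X,\mathcal{F}_0)$ is dense in $\Gamma(X_i,\mathcal{F}_0)$ by the finite free case, an open continuous surjection carries a dense subset to a dense subset, and the image of $\Gamma(X,\mathcal{F}_0)$ in $\Gamma(X_i,M)$ factors through $\Gamma(X,M)\to\Gamma(X_i,M)$ because $\pi$ is a morphism of sheaves on $X$; therefore the latter map has dense image, which is the assertion. The analytic Huber ring case is word for word the same, using the parallel versions of the two lemmas.

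I expect the genuine obstacle to be the existence of the global finite free presentation $\mathcal{F}_0\twoheadrightarrow M$ on the quasi-Stein space $X$: this is a patching problem whose solution rests on the acyclicity (vanishing of positive-degree cohomology) of pseudocoherent and finite free sheaves on the affinoid pieces proved earlier, together with an exhaustion argument in the style of Kiehl. One must additionally be attentive to strictness on sections over each $X_i$, so that an approximation carried out in $\mathcal{F}_0$ descends to an approximation in $M$ --- which is exactly where stable pseudocoherence (forcing the natural topology to agree with the quotient topology) and the open mapping theorem are needed.
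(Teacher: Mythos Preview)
Your reduction to the finite free case is an unnecessary detour, and the step you yourself flag as ``the genuine obstacle'' is in fact a genuine gap. Neither of your proposed routes to a global surjection $\mathcal{F}_0\twoheadrightarrow M$ is available at this point: coadmissible modules over a Fr\'echet--Stein algebra are \emph{not} in general finitely generated (that is precisely why one introduces the coadmissible category), and the Kiehl-style patching of local finite free presentations on a quasi-Stein space is normally carried out \emph{using} the density and $R^1\varprojlim$-vanishing you are trying to prove, so invoking it here is circular. Indeed, the finite-generation statement appears in the paper as the \emph{next} proposition, after density.

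The intended argument (that of \cite[Theorem~2.6.5]{KL2}, to which the paper simply refers) bypasses this entirely. Set $M_i:=\Gamma(X_i,M)$. By the affinoid descent established in the preceding subsections, each $M_i$ is a stably pseudocoherent $W_i$-module and the natural map
\[
W_i\widehat{\otimes}_{W_{i+1}} M_{i+1}\longrightarrow M_i
\]
is an isomorphism, in particular surjective. Thus the system $(M_i)_i$ already satisfies the hypothesis of the first lemma (after \cite[Lemma~2.6.3]{KL2}), and conclusion~I of that lemma gives directly that $\Gamma(X,M)=\varprojlim_i M_i$ has dense image in each $M_i$. Your first paragraph correctly notes that this suffices. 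No global finite free presentation, no open mapping argument, and no appeal to acyclicity beyond the base-change isomorphism is needed.
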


\begin{proof}
See \cite[Theorem 2.6.5]{KL2}.	
\end{proof}

\begin{proposition}\mbox{\bf{(After Kedlaya-Liu \cite[Theorem 2.6.5]{KL2})}}
With the corresponding notations as above, we have that the global section of any pseudocoherent sheaf $M$ over $X$ will finitely generate each fiber $M_x$ over $\mathcal{O}_{X,x}$ for any $x\in X$. (Parallel statement in analytic Huber ring situation holds as well.)  
\end{proposition}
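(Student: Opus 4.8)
The plan is to follow the scheme of \cite[Theorem 2.6.5]{KL2}, combining the preceding density proposition with a perturbation argument for generators of a Banach module. Fix a point $x\in X$. Since $X=\varinjlim_i X_i$ is quasi-Stein, $x$ already lies in one of the affinoids $X_i=\mathrm{Spa}(W_i,W_i^+)$. By the equivalence of categories established earlier in this section (pseudocoherent sheaves over $\mathcal{O}_{\text{\'et}}$ versus \'etale-stably pseudocoherent modules), the restriction $M|_{X_i}$ is the sheaf attached to an \'etale-stably pseudocoherent module $M_i:=M(X_i)$ over $W_i$. In particular $M_i$ is a quotient of a finite free module, say $M_i=W_im_1+\cdots+W_im_n$, and it is complete for its natural topology, so the surjection $W_i^{\oplus n}\twoheadrightarrow M_i$ sending $(a_j)_j$ to $\sum_j a_jm_j$ is strict by the open mapping theorem.

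Next I would invoke the previous proposition: $\Gamma(X,M)$ is dense in the section $M(X_i)=M_i$ over the quasicompact $X_i$. Hence one can choose global sections $s_1,\dots,s_n\in\Gamma(X,M)$ whose images in $M_i$ lie as close to $m_1,\dots,m_n$ as desired. The key step is then the perturbation lemma: since $(m_j)_j$ generates $M_i$ and the associated surjection is strict, any finite family $(s_j)_j$ sufficiently close to $(m_j)_j$ again generates $M_i$ over $W_i$; this is the standard consequence of completeness together with the open mapping theorem for Banach modules over $W_i$, exactly as used in \cite{KL1}. Thus finitely many global sections of $M$ already generate $M_i=M(X_i)$ over $W_i$.

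It remains to pass to the stalk. For every rational subdomain $U$ with $x\in U\subseteq X_i$ one has $M(U)=M_i\widehat{\otimes}_{W_i}\mathcal{O}(U)$ by the sheaf property on the chosen basis, so the images of $s_1,\dots,s_n$ generate $M(U)$ over $\mathcal{O}(U)$; taking the colimit over all such $U\ni x$ shows that the images of $s_1,\dots,s_n$ generate the stalk $M_x$ over the local ring $\mathcal{O}_{X,x}$, as asserted. The analytic Huber ring case is identical once \cite[Lemma 1.10.4]{Ked1} and the analogous density statement are invoked. I expect the main obstacle to be the perturbation lemma itself: one must know that $M_i$ carries a canonical complete topology for which the chosen finite generating map is strict, so that the open mapping theorem secures stability of the generating property under small deformation — the rest is colimit bookkeeping. (If one instead knows that the transition maps $M_{i+1}\widehat{\otimes}_{W_{i+1}}W_i\to M_i$ are strict isomorphisms, the lemma after \cite[Corollary 2.6.4]{KL2} identifies $\Gamma(X,M)$ with $M_i$ directly and the density/perturbation step can be bypassed, but establishing that strictness is a comparable subtlety.)
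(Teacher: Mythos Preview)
Your proposal is correct and spells out precisely the argument of \cite[Theorem 2.6.5]{KL2}, which is all the paper's own proof invokes: density of global sections in $M(X_i)$, the perturbation lemma for generators of a complete finitely generated module (via strictness and a geometric-series argument), and passage to the stalk by colimit. The only cosmetic point is that in this quasi-Stein subsection the relevant equivalence is for the analytic topology rather than the \'etale one, so ``stably pseudocoherent'' rather than ``\'etale-stably pseudocoherent'' is the operative notion; the argument is otherwise unchanged.
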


\begin{proof}
See \cite[Theorem 2.6.5]{KL2}.	
\end{proof}

\begin{proposition}\mbox{\bf{(After Kedlaya-Liu \cite[Theorem 2.6.5]{KL2})}}
With the corresponding notations as above, we have that the corresponding sheaf cohomology of any pseudocoherent sheaf $M$ over $X$ admits vanishing for degree bigger than 0. (Parallel statement in analytic Huber ring situation holds as well.)  
\end{proposition}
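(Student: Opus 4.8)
The plan is to deduce the vanishing from the affinoid acyclicity already established above together with the $R^1\varprojlim$-vanishing along the quasi-Stein exhaustion, in the spirit of \cite[Theorem 2.6.5]{KL2}. Write $X=\varinjlim_i X_i$ with $X_i=\mathrm{Spa}(W_i,W_i^+)$ as in the Setting, so that the $X_i$ form an increasing exhaustion of $X$ by quasicompact (rational) subspaces and each inclusion $X_i\hookrightarrow X_{i+1}$ is of the rational-localization type covered by the descent results of the preceding subsections. Let $M$ be a pseudocoherent sheaf on $X$. Restricting to each $X_i$ produces a pseudocoherent sheaf there, whose global sections $M_i:=M(X_i)$ form an \'etale-stably pseudocoherent module over $W_i$ by the equivalence of categories proven above (the analog of \cite[Theorem 2.5.14, Theorem 3.4.8]{KL2}), and the transition maps $\mathcal{O}_{X_i}(X_i)\widehat{\otimes}_{\mathcal{O}_{X_{i+1}}(X_{i+1})}M_{i+1}\to M_i$ are strict isomorphisms, so the Lemma after \cite[Corollary 2.6.4]{KL2} applies and $M=\varprojlim_i M_i$ surjects onto each $M_i$.

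First I would record that $H^j(X_i,M)=0$ for all $j>0$ and all $i$: this is precisely the acyclicity of pseudocoherent sheaves over an analytic adic Banach affinoid (resp.\ analytic Huber affinoid) that we proved above, applied on each member of the exhaustion. Next I would invoke the Lemma after \cite[Lemma 2.6.3]{KL2} proven above: since the transition maps of the system $(M_i)$ have dense image (indeed are strict isomorphisms), we have $R^1\varprojlim_i M_i=0$, and trivially $R^1\varprojlim$ of the (eventually zero) higher-cohomology systems vanishes as well.

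Then I would run the usual comparison between sheaf cohomology on $X$ and the cohomology of the exhaustion. Since $X=\bigcup_i X_i$ with each $X_i$ quasicompact open and the family cofinal, and since each $X_i$ is acyclic for $M$, the Čech complex of the cover $\{X_i\}$ computes $H^\ast(X,M)$ and yields the Milnor/$\varprojlim$ exact sequence for an increasing union
\[
0\longrightarrow R^1\varprojlim_i H^{j-1}(X_i,M)\longrightarrow H^j(X,M)\longrightarrow \varprojlim_i H^j(X_i,M)\longrightarrow 0.
\]
For $j\geq 2$ both outer terms vanish by the affinoid acyclicity just noted, giving $H^j(X,M)=0$. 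For $j=1$ the right-hand term vanishes (again affinoid acyclicity), while the left-hand term is $R^1\varprojlim_i H^0(X_i,M)=R^1\varprojlim_i M_i=0$ by the $R^1\varprojlim$-vanishing above; hence $H^1(X,M)=0$ as well. The analytic Huber ring case is identical, using the Huber-ring versions of the quoted affinoid acyclicity and $\varprojlim$ statements, which were noted in parentheses in those earlier results.

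The main obstacle is the justification of the $\varprojlim$ exact sequence in the adic/analytic setting: one must know that $\mathcal{O}_X$-sheaf cohomology on the quasi-Stein $X$ is genuinely computed by the quasicompact exhaustion (which follows here from the affinoid acyclicity, so that the Čech-to-derived-functor spectral sequence for $\{X_i\}$ degenerates appropriately), and that the relevant inverse systems are Mittag-Leffler/have dense transition maps so that $R^1\varprojlim$ behaves as in the ordinary Banach setting. This is exactly where the topological input of the Lemma after \cite[Lemma 2.6.3]{KL2} and the strictness in the Lemma after \cite[Corollary 2.6.4]{KL2} is indispensable; once those are in hand the remaining cohomological bookkeeping is formal. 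One could alternatively first perform a dévissage through a short exact sequence $0\to K\to C\to M\to 0$ with $C$ finite free as in the binary glueing arguments above, but the exhaustion argument already suffices.
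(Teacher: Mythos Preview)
Your argument is correct and is precisely the content of \cite[Theorem 2.6.5]{KL2}, to which the paper's own proof simply refers: affinoid acyclicity on each $X_i$ combined with the Mittag-Leffler/$R^1\varprojlim$-vanishing from the preceding lemmas feeds into the $\varprojlim$ exact sequence to kill $H^j(X,M)$ for $j\geq 1$. One cosmetic slip: in this subsection the relevant modules are \emph{stably} pseudocoherent (analytic topology) rather than \emph{\'etale-stably}, but this does not affect the reasoning.
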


\begin{proof}
See \cite[Theorem 2.6.5]{KL2}.	
\end{proof}

\begin{proposition} \mbox{\bf{(After Kedlaya-Liu \cite[Corollary 2.6.6]{KL2})}}
For any pseudocoherent sheaf $M$ over the space $X$, we have in our current situation that the exact functor from the corresponding category of all the pseudocoherent sheaves to the corresponding one of all the corresponding $\mathcal{O}_X(X)$-modules through the corresponding taking the global section. (Parallel statement in analytic Huber ring situation holds as well.)  	
\end{proposition}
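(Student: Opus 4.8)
The plan is to deduce this from the cohomology vanishing established in the preceding proposition, exactly as in \cite[Corollary 2.6.6]{KL2}. First I would recall that over the quasi-Stein space $X=\varinjlim_i X_i$ with $X_i=\mathrm{Spa}(W_i,W_i^+)$, the global section functor $M\mapsto \Gamma(X,M)$ on pseudocoherent sheaves is the composite of restriction to the exhausting tower $\{X_i\}_i$ with the inverse limit, so that $\Gamma(X,M)=\varprojlim_i M(X_i)$. Such a functor is automatically left exact, so the whole content of the statement is the right exactness, i.e.\ the surjectivity of $\Gamma(X,M)\rightarrow\Gamma(X,M'')$ for every short exact sequence $0\rightarrow M'\rightarrow M\rightarrow M''\rightarrow 0$ of pseudocoherent sheaves over $X$.

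Next I would restrict such a short exact sequence to each affinoid $X_i$. There $M'$, $M$, $M''$ become stably pseudocoherent modules over $W_i$ (using \cite[Lemma 2.22]{TX2} for the needed closure properties of the category of pseudocoherent modules, together with the glueing and acyclicity results established earlier in the present paper), and the sequence $0\rightarrow M'(X_i)\rightarrow M(X_i)\rightarrow M''(X_i)\rightarrow 0$ is exact for each $i$ by the acyclicity over sheafy affinoids. Passing to the inverse limit and invoking the vanishing of $R^1\varprojlim_i$ supplied by the Banach-module lemma above (the analogue of \cite[Lemma 2.6.3]{KL2}; the transition maps on the $M'(X_i)$ are surjective, so the relevant Mittag--Leffler condition holds), one gets that $\Gamma(X,M)=\varprojlim_i M(X_i)\rightarrow\varprojlim_i M''(X_i)=\Gamma(X,M'')$ is surjective. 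Equivalently, one feeds the short exact sequence into the long exact sequence in sheaf cohomology on $X$ and uses that $H^1(X,M')=0$ by the preceding proposition, so that the connecting map $\Gamma(X,M'')\rightarrow H^1(X,M')$ vanishes; this gives the same conclusion.

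The hard part will not be the homological bookkeeping but ensuring that the category of pseudocoherent sheaves over $X$ is an exact category in which short exact sequences behave as expected: one must know that kernels and cokernels of morphisms of pseudocoherent sheaves are again pseudocoherent and that restriction to each $X_i$ preserves both exactness and stable pseudocoherence. This reduces, level by level, to the corresponding statements over the adic Banach rings $W_i$ --- handled by \cite[Lemma 2.22]{TX2} and the glueing/acyclicity results proved above --- and to the compatibility of these local statements along the tower, which is precisely what the two Banach-module lemmas of this section (the analogues of \cite[Lemma 2.6.3]{KL2} and \cite[Corollary 2.6.4]{KL2}) provide. Once these are in place, the exactness of the global section functor follows formally, and the parallel assertion for analytic Huber rings is obtained by the same argument using the Huber-ring versions of the cited lemmas.
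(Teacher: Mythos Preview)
The paper gives no proof for this proposition at all; it is stated as an immediate consequence of \cite[Corollary 2.6.6]{KL2} and left at that. Your argument is correct and is precisely the standard one behind that reference: left exactness is automatic, and right exactness follows from $H^1(X,M')=0$ (the preceding proposition) via the long exact cohomology sequence, or equivalently from the vanishing of $R^1\varprojlim$ along the tower. Your additional remarks on why the restrictions to each $X_i$ remain stably pseudocoherent and why the levelwise sequences are exact are accurate and make explicit what the paper leaves implicit. The only mild over-reach is the paragraph about needing kernels and cokernels of arbitrary morphisms of pseudocoherent sheaves to be pseudocoherent: for the bare statement that $\Gamma(X,-)$ carries short exact sequences of pseudocoherent sheaves to short exact sequences of $\mathcal{O}_X(X)$-modules, one only needs the long exact sequence in sheaf cohomology together with $H^1$-vanishing, not a full abelian (or exact) category structure on pseudocoherent sheaves.
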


\begin{proposition}  \mbox{\bf{(After Kedlaya-Liu \cite[Corollary 2.6.8]{KL2})}}
For any pseudocoherent sheaf $M$ admitting a structure of vector bundles over the space $X$, we have in our current situation that the corresponding sufficient and necessary condition for the global section to be finite projective is exactly that the global section of $M$ is finitely generated. (Parallel statement in analytic Huber ring situation holds as well.)  	
\end{proposition}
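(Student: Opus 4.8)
The plan is to reproduce the argument of \cite[Corollary 2.6.8]{KL2}, feeding in the quasi-Stein facts recorded in the preceding propositions. One direction needs nothing at all, since a finite projective module is finitely generated; so I would assume conversely that $\Gamma(X,M)$ is finitely generated over $\mathcal{O}_X(X)$ and work towards finite projectivity.

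First I would choose global sections $e_1,\dots,e_n\in\Gamma(X,M)$ generating $\Gamma(X,M)$ over $\mathcal{O}_X(X)$ and form the morphism of $\mathcal{O}_X$-modules $\varphi\colon\mathcal{O}_X^{\oplus n}\to M$ they determine. By the proposition (after \cite[Theorem 2.6.5]{KL2}) asserting that the global sections of a pseudocoherent sheaf finitely generate each fiber $M_x$ over $\mathcal{O}_{X,x}$, and since the $e_i$ already span $\Gamma(X,M)$, the images of $e_1,\dots,e_n$ generate $M_x$ for every $x\in X$; hence $\varphi$ is surjective on stalks and so a surjection of sheaves. Set $N:=\ker\varphi$, giving a short exact sequence
\[
\xymatrix@C+0pc@R+0pc{
0 \ar[r] &N \ar[r] &\mathcal{O}_X^{\oplus n} \ar[r] &M \ar[r] &0
}
\]
of $\mathcal{O}_X$-modules, in which the middle map is $\varphi$. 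Restricting to a sufficiently small analytic affinoid subspace $U$, this becomes a short exact sequence of modules with $M(U)$ finite projective (because $M$ is a vector bundle), so it splits over $U$ and exhibits $N$ locally as a direct summand of a finite free module; thus $N$ is again a vector bundle, and in particular $N$, $M$ and the internal Hom sheaves formed from them are pseudocoherent.

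Next I would split the sequence globally. Applying $\mathcal{H}om_{\mathcal{O}_X}(M,-)$ and using that $M$ is finite locally free (so the sheaves $\mathcal{E}xt^{i}_{\mathcal{O}_X}(M,-)$ vanish for $i>0$) produces a short exact sequence of vector bundles
\[
\xymatrix@C+0pc@R+0pc{
0 \ar[r] &\mathcal{H}om_{\mathcal{O}_X}(M,N) \ar[r] &\mathcal{H}om_{\mathcal{O}_X}(M,\mathcal{O}_X^{\oplus n}) \ar[r] &\mathcal{H}om_{\mathcal{O}_X}(M,M) \ar[r] &0 .
}
\]
By the proposition (after \cite[Theorem 2.6.5]{KL2}) giving the vanishing of the sheaf cohomology of a pseudocoherent sheaf over the quasi-Stein space $X$ in positive degrees, $H^1(X,\mathcal{H}om_{\mathcal{O}_X}(M,N))=0$; hence the global-section map $\mathrm{Hom}_{\mathcal{O}_X}(M,\mathcal{O}_X^{\oplus n})\to\mathrm{Hom}_{\mathcal{O}_X}(M,M)$ is surjective, and lifting $\mathrm{id}_M$ yields a splitting $s\colon M\to\mathcal{O}_X^{\oplus n}$ of $\varphi$. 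Therefore $\mathcal{O}_X^{\oplus n}\cong M\oplus N$ as $\mathcal{O}_X$-modules, and applying the additive global-section functor gives $\mathcal{O}_X(X)^{\oplus n}\cong\Gamma(X,M)\oplus\Gamma(X,N)$, so $\Gamma(X,M)$ is a direct summand of a finite free $\mathcal{O}_X(X)$-module and hence finite projective. The analytic Huber pair version follows verbatim using the Huber-ring forms of the quoted propositions.

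I expect the main obstacle to be the verification that the sheaves entering the argument, above all the kernel $N$ and the internal Hom sheaves, are genuinely pseudocoherent, i.e.\ vector bundles, so that the quasi-Stein acyclicity of the preceding propositions actually applies; this is precisely where the vector bundle hypothesis on $M$ (together with the affinoid-local splittings furnished by the Tate and \'etale acyclicity results of the earlier sections) is used, after which the global splitting through the $H^1$-vanishing is formal.
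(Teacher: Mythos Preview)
Your argument is correct and follows exactly the approach the paper indicates: the paper's proof is a one-line reference to \cite[Corollary 2.6.8]{KL2} together with the remark that one extracts a global splitting from the local ones over quasi-compacts, and your use of the $H^1$-vanishing for $\mathcal{H}om_{\mathcal{O}_X}(M,N)$ to lift $\mathrm{id}_M$ is precisely that extraction. Your identification of the main technical point---checking that $N$ and the internal Hom sheaves are again vector bundles so that the quasi-Stein acyclicity applies---is accurate, and the local splitting you obtain from the vector bundle hypothesis on $M$ handles it.
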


\begin{proof}
See \cite[Corollary 2.6.8]{KL2}, it is straightforward to exact a global splitting from the local ones over quasi-compacts.	
\end{proof}

\begin{proposition}\mbox{\bf{(After Kedlaya-Liu \cite[Proposition 2.6.17]{KL2})}} 
Assume that the space $X$ is basically $m$-uniform in the sense of \cite{KL2}. Then we have the corresponding finiteness of the global section is equivalent to to the corresponding uniform finiteness through out all $X_i,i=0,1,2,...$. (Parallel statement in analytic Huber ring situation holds as well.)  	
\end{proposition}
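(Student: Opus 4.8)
The plan is to adapt the argument of \cite[Proposition 2.6.17]{KL2} to the present general base, exploiting the results already established above for pseudocoherent sheaves over the quasi-Stein space $X=\varinjlim_i X_i$: namely that the global section is recovered as $M(X)=\varprojlim_i M_i$ with $M_i:=\Gamma(X_i,M)$, that $M(X)$ is dense in each $M_i$, that $M(X)$ finitely generates each fiber, that the higher sheaf cohomology of any pseudocoherent sheaf over $X$ vanishes, and that the relevant transition maps between the $M_i$ are strict. Write $n$ for a candidate uniform bound on the number of generators.

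First I would treat the easy implication. Assume $M(X)$ is finitely generated, say by $v_1,\dots,v_n$ over $\mathcal{O}_X(X)$. For each $i$ the images of the $v_j$ in $M_i$ generate every fiber $M_x$ with $x\in X_i$, since $M(X)$ does and the $v_j$ generate $M(X)$. The cokernel of the induced map $\mathcal{O}_{X_i}(X_i)^n\to M_i$ is then a pseudocoherent module over $\mathcal{O}_{X_i}(X_i)$ whose associated sheaf on $X_i$ has all fibers zero; by the coherence available from the basically $m$-uniform hypothesis such a module vanishes, so $M_i$ is already generated by the $n$ restrictions. Hence the bound $n$ is uniform in $i$.

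For the converse, suppose every $M_i$ is generated by at most $n$ elements over $\mathcal{O}_{X_i}(X_i)$. The locus in $M_i^n$ of $n$-tuples whose images generate $M_i$ is open --- again by the coherence coming from $m$-uniformity together with completeness of $M_i$ --- and it is nonempty by hypothesis; combining this with the density of $M(X)$ in $M_i$ one already finds $n$-tuples of genuine global sections generating $M_i$. The substantive point is to produce a single $n$-tuple that works for all $i$ at once: one builds inductively a sequence $w^{(k)}\in M(X)^n$ such that $w^{(k)}$ generates $M_0,\dots,M_k$ and such that $w^{(k+1)}$ is close enough to $w^{(k)}$ in the Fr\'echet topology of $M(X)=\varprojlim_i M_i$ (controlled through the strict transition maps) that the generating property at levels $\le k$ is preserved while it is acquired at level $k+1$; the limit $w^{(\infty)}=\lim_k w^{(k)}$, which exists by completeness, then generates every $M_i$. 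The components of $w^{(\infty)}$ define a morphism of sheaves $\mathcal{O}_X^n\to M$ which, by the exactness of global sections over each affinoid $X_i$ recorded in the earlier equivalence results, is surjective as a sheaf map since it is surjective on $\Gamma(X_i,-)$ for every $i$ and the $X_i$ cover $X$. Let $K$ be its kernel, a pseudocoherent sheaf on $X$. Taking global sections in $0\to K\to\mathcal{O}_X^n\to M\to 0$ and invoking the vanishing $H^1(X,K)=0$ established above, the map $\mathcal{O}_X(X)^n\to M(X)$ is surjective, so $M(X)$ is finitely generated, as wanted.

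The main obstacle is precisely the inductive construction of the simultaneously generating $n$-tuple $w^{(\infty)}$: it requires tracking the openness of the surjectivity locus against the projective-limit topology and using the strictness of the transition maps in the exhaustion $X=\varinjlim_i X_i$, which is exactly where the basically $m$-uniform hypothesis is needed, and this is the part for which \cite[Proposition 2.6.17]{KL2} must be transcribed with care to the present setting where there is no base field. The parallel statement for analytic Huber rings follows by the same argument verbatim, replacing the adic Banach affinoids $X_i$ by analytic Huber affinoids and invoking the Huber-ring versions of the cited results.
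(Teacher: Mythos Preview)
Your proposal is correct and follows essentially the same approach as the paper: the paper's own proof is simply a pointer to \cite[Theorem 6.27]{TX2}, which is itself the adaptation of \cite[Proposition 2.6.17]{KL2} you have spelled out in detail --- the density/openness inductive approximation to a single generating $n$-tuple, followed by the $H^1$-vanishing for the kernel to upgrade sheaf surjectivity to surjectivity on global sections. Your write-up is more explicit than the paper's one-line reference but is the same argument.
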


\begin{proof}
See the proof given in \cite[Theorem 6.27]{TX2}. 	
\end{proof}

\


\section{Applications} \label{section6}

\subsection{Application to Noetherian Cases}

\indent The corresponding noetherian situation is expected to be in some sense better than just being pseudoflat with respect to the corresponding rational localization.

\begin{setting}
We now work with the corresponding analytic adic Banach rings over $\mathbb{Q}_p$ or $\mathbb{F}_p((t))$ as the corresponding base spaces. The deformation will happen along some Banach ring over $\mathbb{Q}_p$ or $\mathbb{F}_p((t))$. And moreover we have to assume that the corresponding noetherianness preserves under the corresponding deformation over some ring $V$. Namely we have to assume that $W\widehat{\otimes}_* V$ is always noetherian, for instance this could be achieved when we have that $V$ over $\mathbb{Q}_p$ comes from truncations of distribution algebra over some $p$-adic Lie groups by considering some further $p$-adic microlocal analysis.
\end{setting}

\begin{lemma}
With the notations in the \cite[Lemma 2.4.10]{KL2}, we have that the corresponding morphisms $W\widehat{\otimes}V\rightarrow B_1\widehat{\otimes}V$, $W\widehat{\otimes}V\rightarrow B_2\widehat{\otimes}V$ and $W\widehat{\otimes}V\rightarrow B_{12}\widehat{\otimes}V$ are 2-pseudoflat. And furthermore in our current context they are actually flat.
\end{lemma}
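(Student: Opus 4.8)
The plan is to deduce the $2$-pseudoflatness from the non-deformed statement of \cite[Lemma 2.4.10]{KL2} combined with the deformed rational-localization pseudoflatness of \cite[Theorem 2.12]{TX2}, and then to promote $2$-pseudoflatness to honest flatness by invoking the noetherianness built into the preceding setting.

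First I would recall that in \cite[Lemma 2.4.10]{KL2} the rings $B_1,B_2$ arise from a binary rational (equivalently, a simple Laurent) covering of $\mathrm{Spa}(W,W^+)$, with $B_{12}=B_1\widehat{\otimes}_W B_2$ playing the role of the intersection, and that each of $W\to B_1$, $W\to B_2$, $W\to B_{12}$ is a composition of simple Laurent and rational localizations. Applying $-\widehat{\otimes}V$ over $\mathbb{Q}_p$ or $\mathbb{F}_p((t))$ (as in the setting), one uses associativity and base-change compatibility of the completed tensor product over a field to identify $B_k\widehat{\otimes}V$ with the corresponding rational localization of $W\widehat{\otimes}V$, and in particular $B_{12}\widehat{\otimes}V$ with $(B_1\widehat{\otimes}V)\widehat{\otimes}_{W\widehat{\otimes}V}(B_2\widehat{\otimes}V)$. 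Then \cite[Theorem 2.12]{TX2} applied to each simple Laurent / rational piece, together with the stability of $2$-pseudoflatness under composition, gives that $W\widehat{\otimes}V\to B_1\widehat{\otimes}V$, $W\widehat{\otimes}V\to B_2\widehat{\otimes}V$ and $W\widehat{\otimes}V\to B_{12}\widehat{\otimes}V$ are $2$-pseudoflat.

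For the stronger claim, I would use that by hypothesis the deformation preserves noetherianness, so $W\widehat{\otimes}V$ is noetherian, and hence so are its rational localizations $B_1\widehat{\otimes}V$, $B_2\widehat{\otimes}V$, $B_{12}\widehat{\otimes}V$. Over a noetherian (Banach) ring every finitely generated module is finitely presented, hence pseudocoherent; consequently $2$-pseudoflatness forces $\mathrm{Tor}_1^{W\widehat{\otimes}V}(M,B_k\widehat{\otimes}V)=0$ for every finitely generated $M$ and $k\in\{1,2,12\}$. Since every module is a filtered colimit of finitely generated ones and $\mathrm{Tor}_1$ commutes with filtered colimits, $\mathrm{Tor}_1^{W\widehat{\otimes}V}(-,B_k\widehat{\otimes}V)$ vanishes identically, i.e. each of the three morphisms is flat.

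The main obstacle I anticipate is the middle step: one must verify that the decomposition of \cite[Lemma 2.4.10]{KL2} is genuinely compatible with the base change $-\widehat{\otimes}V$ --- that the relevant completed tensor products commute as asserted and that applying \cite[Theorem 2.12]{TX2} to the individual pieces and composing does not destroy the degree-$2$ Tor-vanishing. This is precisely where the hypotheses on $V$ (a Banach ring over $\mathbb{Q}_p$ or $\mathbb{F}_p((t))$, hence flat over the base field, with noetherian deformed products) are needed, and the computations run in parallel to those in the proof of \cite[Theorem 2.12]{TX2}.
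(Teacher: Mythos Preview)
Your proposal is correct and follows essentially the same route as the paper: the $2$-pseudoflatness is imported from the already-established deformed rational-localization result (the paper just says ``this is already true in the corresponding more general situation'', while you make the reference to \cite[Theorem 2.12]{TX2} explicit), and flatness is then deduced exactly as the paper does, by using noetherianness to identify finitely generated with finitely presented (hence pseudocoherent) modules, getting $\mathrm{Tor}_1$-vanishing on those, and passing to arbitrary modules via filtered colimits. One small remark: you do not actually need the localizations $B_k\widehat{\otimes}V$ to be noetherian for the flatness step, only the source $W\widehat{\otimes}V$; the paper accordingly does not invoke noetherianness of the targets.
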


\begin{proof}
In our current situation, the corresponding 2-pseudoflatness is achieved since this is already true in the corresponding more general situation. However for finite presented modules (we do not have to consider the topological issues since we are in the noetherian situation) this is already flat which implies for finitely generated modules (we do not have to consider the topological issues since we are in the noetherian situation) this is already flat	as well. Then for any module which could be written as injective limit of finite ones, the result holds. 
\end{proof}

\begin{corollary}
The corresponding rational localization with respect to the adic Banach ring $(W,W^+)$ is flat (namely for all the corresponding finitely presented module over $W\widehat{\otimes}V$).	
\end{corollary}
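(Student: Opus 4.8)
The plan is to bootstrap from the previous lemma. First I would recall that, following the constructions of \cite[\S 2.4]{KL2}, every rational localization $W\widehat{\otimes}V\to B\widehat{\otimes}V$ can be written as a finite composition of simple Laurent (and inverse-parameter) localizations, each of which is of the form $W'\widehat{\otimes}V\to B'_k\widehat{\otimes}V$ for one of the pieces $B'_1,B'_2,B'_{12}$ of a binary rational covering as in \cite[Lemma 2.4.10]{KL2}. Thus it suffices to establish flatness of each such basic morphism and then invoke transitivity of flatness under composition.

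For a single basic morphism, the previous lemma already supplies $2$-pseudoflatness and, in the present noetherian situation, genuine flatness: on finite free resolutions over the noetherian ring $W\widehat{\otimes}V$ the completed tensor product $B'_k\widehat{\otimes}V\,\widehat{\otimes}_{W\widehat{\otimes}V}(-)$ agrees with the algebraic one, since a finitely generated module over a noetherian Banach ring is complete in its canonical topology and no completion is lost along a finite presentation; hence $\mathrm{Tor}_1$ vanishes against every finitely presented module, and passing to filtered colimits — which are exact and commute with $\mathrm{Tor}$ — upgrades this to flatness against arbitrary modules. Composing the basic pieces, I would then conclude that the full rational localization $W\widehat{\otimes}V\to B\widehat{\otimes}V$ is flat.

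The hard part, such as it is, will not be the homological step but the two pieces of bookkeeping surrounding it: first, checking that the completed tensor product really matches the algebraic tensor product on the finitely presented modules in play, which is precisely where the noetherian hypothesis on $W\widehat{\otimes}_* V$ in the running setting is used; and second, expressing an arbitrary rational localization through the finitely many basic morphisms of \cite[Lemma 2.4.10]{KL2} compatibly with the deformation by $V$. Both are carried out exactly as in the nonrelative noetherian discussion of \cite[\S 2.4]{KL2}, so no genuinely new ingredient is required; the parenthetical restriction to finitely presented modules in the statement simply records the range in which the argument is cleanest and in which it will be applied.
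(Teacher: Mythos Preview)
Your proposal is correct and follows the same route the paper implicitly takes: the paper gives no separate proof for this corollary, treating it as an immediate consequence of the preceding lemma, and your argument simply spells out the intended deduction---factor an arbitrary rational localization into a finite composition of the basic binary pieces of \cite[Lemma 2.4.10]{KL2}, apply the lemma to each piece, and use transitivity of flatness. The bookkeeping you flag (completed versus algebraic tensor product in the noetherian setting, and the factorization compatible with the deformation by $V$) is exactly what the lemma's proof already absorbs, so nothing further is needed.
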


\begin{proposition}
Then we consider the corresponding presheafification of any finitely generated module $M$ over $W\widehat{\otimes}V$, to be more precise over $\mathrm{Spa}(W,W^+)$ we will define the corresponding presheaf $\widetilde{M}$ by taking the inverse limit of the base changes of $M$ throughout all the rational localizations of $W$. Then we have that the Tate glueing property holds in our current situation for such presheaf $\widetilde{M}$.
\end{proposition}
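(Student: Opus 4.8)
The plan is to adapt the Tate glueing argument of Kedlaya--Liu \cite{KL1} --- in the same spirit as the pseudocoherent versions \cite[Lemma 2.5.13]{KL2}, \cite[Theorem 2.5.14]{KL2} already treated above --- to our deformed noetherian setting, the key point being that over the noetherian ring $W\widehat{\otimes}V$ every finitely generated module is automatically finitely presented. By the usual d\'evissage one reduces an arbitrary rational covering to a finite composite of simple Laurent (binary) coverings; this takes place on the underlying adic space $\mathrm{Spa}(W,W^+)$ and is unaffected by the coefficient ring $V$. So fix a binary covering $\mathrm{Spa}(W,W^+)=U_1\cup U_2$ with intersection $U_{12}$, write $A:=W\widehat{\otimes}V$ and let $A_1,A_2,A_{12}$ be the corresponding deformed rational localizations, and for finitely generated $M$ over $A$ put $M_k:=M\widehat{\otimes}_A A_k$, $M_{12}:=M\widehat{\otimes}_A A_{12}$; since all modules in sight are finitely generated over noetherian Banach algebras over $\mathbb{Q}_p$ or $\mathbb{F}_p((t))$, they carry their canonical complete topology and these completed tensor products agree with the algebraic ones.

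First I would record that the structure presheaf $\mathcal{O}\widehat{\otimes}V$ itself has the glueing property for the covering, i.e. the augmented \v{C}ech complex
\[
0\longrightarrow A\longrightarrow A_1\oplus A_2\longrightarrow A_{12}\longrightarrow 0
\]
is strict exact. This is Tate acyclicity for the sheafy ring $W$ propagated through $-\widehat{\otimes}V$, which is exactly what the preceding lemma and corollary provide (flatness, indeed $2$-pseudoflatness, of $A\to A_1$, $A\to A_2$, $A\to A_{12}$), together with \cite[Theorem 2.12]{TX2}. Next I would pass from the structure presheaf to $\widetilde{M}$: choosing a finite presentation $A^{b}\to A^{a}\to M\to 0$ and using that $-\widehat{\otimes}_A A_k$ and $-\widehat{\otimes}_A A_{12}$ are exact, one obtains a short exact sequence of augmented \v{C}ech complexes relating $\widetilde{M}$ to the (exact) complexes of the free modules $A^{a}$ and $A^{b}$; the associated long exact sequence, combined with the fact that a binary covering leaves no room for cohomology above degree one, yields strict exactness of
\[
0\longrightarrow M\longrightarrow M_1\oplus M_2\longrightarrow M_{12}\longrightarrow 0 ,
\]
which is the glueing property for $\widetilde{M}$ on the binary covering, and hence on all rational coverings after d\'evissage. (This is the standard passage of \cite{KL1}; the possible noncommutativity of $V$ is irrelevant, since everything is module-theoretic on a fixed side and the site is unchanged.)

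As a companion I would spell out the descent of glueing data: given finitely generated $M_k$ over $A_k$ with an isomorphism $M_1\widehat{\otimes}_{A_1}A_{12}\cong M_2\widehat{\otimes}_{A_2}A_{12}$, the module $M:=\ker\bigl(M_1\oplus M_2\to M_{12}\bigr)$ satisfies $M\widehat{\otimes}_A A_k\cong M_k$ compatibly with the glueing isomorphism, by flatness of the localizations and the exact sequence for $A$ above, so that $\widetilde{M}$ reproduces the given datum; that $M$ is finitely generated --- hence finitely presented, $A$ being noetherian --- over $A$ is obtained by lifting finite generating families of $M_1$ and $M_2$ to elements of $M$ via the density of $A$ in each $A_k$ and the strictness of the localization maps, exactly as in \cite[Lemma 2.22]{TX2}.

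The step I expect to be the main obstacle is precisely this last finite-generation descent for the kernel module: the homological bookkeeping (propagating presentations, the long exact sequence, five-lemma comparisons) is routine once flatness is in hand, but producing finitely many global generators genuinely requires the density of global sections together with control of the natural topologies, and it is here that the noetherian hypothesis --- which makes $M_1,M_2$ finitely presented and guarantees that submodules of finite modules are finite --- and the flatness of rational localization recorded in the preceding lemma and corollary are doing the real work.
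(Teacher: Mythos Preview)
Your proposal is correct and follows essentially the same route as the paper, which simply invokes the flatness of rational localization established in the preceding lemma together with \cite[Proposition 2.4.20, Proposition 2.4.21]{KL1}; you have unpacked that citation into the explicit d\'evissage-to-binary-coverings plus dimension-shifting argument. Note only that your third paragraph on descent of glueing data, and the attendant discussion of finite generation of the kernel, actually addresses the \emph{next} proposition rather than the present one, which asserts only the Tate acyclicity of $\widetilde{M}$.
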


\begin{proof}
This reduces to the previous lemma by using \cite[Proposition 2.4.20, Proposition 2.4.21]{KL1}.	
\end{proof}

\indent The corresponding glueing finitely generated modules is also achieved in current noetherian situation. First we consider the corresponding result:

\begin{proposition}
The descent along the morphism $W\rightarrow B_1\bigoplus B_2$ is effective as long as one focuses on the category of all the finitely presented modules over $W\widehat{\otimes}V$. 	
\end{proposition}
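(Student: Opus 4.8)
The plan is to follow the strategy of Kedlaya--Liu \cite[Lemma 2.5.13]{KL2}, while exploiting the noetherian hypothesis to bypass the topological pseudocoherence bookkeeping. Write $A := W\widehat{\otimes}V$, $A_k := B_k\widehat{\otimes}V$ for $k=1,2$, and $A_{12} := B_{12}\widehat{\otimes}V$. By the preceding lemma and corollary all three maps $A\to A_1$, $A\to A_2$, $A\to A_{12}$ are flat, the rings $A,A_1,A_2,A_{12}$ are noetherian, and $A\to A_1\oplus A_2$ is a binary Tate covering, so in particular the structure presheaf is acyclic for it and the complex $0\to A\to A_1\oplus A_2\to A_{12}$ is exact.

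First I would unwind the descent datum: it consists of a finitely presented $A_1$-module $M_1$, a finitely presented $A_2$-module $M_2$, and an $A_{12}$-linear isomorphism $\iota\colon M_1\widehat{\otimes}_{A_1}A_{12}\xrightarrow{\sim}M_2\widehat{\otimes}_{A_2}A_{12} =: M_{12}$; since the covering is binary the cocycle condition is automatic. Set
\[
M \;:=\; \ker\!\big(d\colon M_1\oplus M_2\longrightarrow M_{12}\big),\qquad d(x_1,x_2)=\iota(x_1\otimes 1)-x_2\otimes 1 .
\]
Equivalently, $M=\widetilde{M}(\mathrm{Spa}(W,W^+))$ for the presheaf $\widetilde{M}$ on rational subdomains assembled from the descent datum; by the previous proposition the Tate glueing property holds for such a presheaf, so $\widetilde{M}$ is a sheaf, and together with the flatness recalled above this places us exactly in the setting of \cite[Proposition 2.4.20, Proposition 2.4.21]{KL1}.

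Next I would verify that $M$ solves the descent problem. For the base change, apply the exact functor $-\widehat{\otimes}_A A_k$ to the defining equalizer: flatness of $A\to A_k$ preserves kernels, and using the exactness of $0\to A\to A_1\oplus A_2\to A_{12}$ together with the cocycle identity one identifies $M\widehat{\otimes}_A A_k$ with $M_k$ compatibly with $\iota$. For finiteness, the noetherian hypothesis does the work: since $A\to A_1\oplus A_2$ is faithfully flat and each $M_k$ is finitely presented, the properties ``finitely generated'' and ``finitely presented'' descend along this faithfully flat base change, exactly by the f.p.\ descent of \cite[Chapitre VIII]{SGAI} invoked in the \'etale case above; alternatively, $H^1$ of the binary Tate covering with coefficients in $\widetilde{M}$ vanishes by flatness and Tate acyclicity, so $0\to M\to M_1\oplus M_2\to M_{12}\to 0$ is exact and finite generation of $M$ over the noetherian ring $A$ follows from that of the $M_k$. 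Either way $M$ is finitely presented over $A$, and functoriality of the construction in the descent datum makes $(M_1,M_2,\iota)\mapsto M$ a quasi-inverse to base change, which is the asserted effectivity.

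The main obstacle is the finiteness step: the base-change identification is formal once flatness is available, but controlling the glued global sections is the real content. In the general situation of \cite[Lemma 2.5.13]{KL2} one must push finite free resolutions through the glueing and track completeness of the intermediate syzygies; the point of the present noetherian setting is precisely that ``finitely presented'' collapses to ``finitely generated'', the $\mathrm{Tor}_1$-vanishing of the earlier lemma is genuine flatness rather than a topological surrogate, and faithfully flat descent of finiteness applies verbatim, so that resolution bookkeeping disappears.
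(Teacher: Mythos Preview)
Your argument is correct and self-contained. The paper's own proof is a one-line citation: it simply says that this is the noetherian specialization of \cite[Lemma 2.14]{TX2}, i.e.\ of the general glueing result for $V$-stably pseudocoherent modules, noting that over noetherian $W\widehat{\otimes}V$ the class of finitely presented modules coincides with the stably pseudocoherent ones. You take a genuinely different, more direct route: rather than invoking the pseudocoherent descent machinery and then specializing, you exploit the honest flatness of $A\to A_k$ established in the preceding lemma (which is only available in the noetherian situation) to identify $M\otimes_A A_k\cong M_k$ by a bare equalizer computation, and then get finite presentation of $M$ either from faithfully flat descent \cite[Chapitre VIII]{SGAI} or from the short exact sequence $0\to M\to M_1\oplus M_2\to M_{12}\to 0$ and noetherianness of $A$. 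What your approach buys is that it never touches the topological bookkeeping of stably pseudocoherent modules at all; what the paper's approach buys is brevity and uniformity with the surrounding non-noetherian statements, at the cost of relying on an external reference whose proof carries exactly the overhead you have avoided.
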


\begin{proof}
This is the corresponding noetherian implication of the corresponding result in \cite[Lemma 2.14]{TX2}.	
\end{proof}

\begin{proposition}
The corresponding functor of taking the corresponding global section will give rise to the corresponding equivalence between the corresponding $V$-coherent sheaves and the $V$ finitely presented modules.
\end{proposition}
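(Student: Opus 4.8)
The plan is to produce an explicit quasi-inverse to the global-sections functor and then verify, via the Kedlaya--Liu glueing formalism, that the two are mutually inverse. To a finitely presented module $M$ over $W\widehat{\otimes}V$ I would attach the presheaf $\widetilde{M}$ defined, exactly as in the two propositions just above, by completed base change of $M$ along the rational localizations of $W$; by the Tate glueing property established there this presheaf is in fact a sheaf with vanishing higher \v{C}ech cohomology over rational coverings, hence a $V$-coherent sheaf, and the degree-zero part of that acyclicity gives $\Gamma(X,\widetilde{M})\cong M$. In the reverse direction, given a $V$-coherent sheaf $\mathcal{M}$ one takes $\Gamma(X,\mathcal{M})$; the content of the statement is that this module is finitely presented and that $\mathcal{M}$ is recovered as $\widetilde{\Gamma(X,\mathcal{M})}$.

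Both recoveries reduce to checking the hypotheses of the abstract glueing criterion of \cite[Proposition 2.4.20, Proposition 2.4.21]{KL1} (equivalently \cite[Lemma 1.10.4]{Ked1}, \cite[Theorem 2.5.14]{KL2}), adapted to the present noetherian analytic adic Banach deformation. The \emph{base}/refinement condition is supplied by the flatness of rational localizations recorded in the corollary above together with the acyclicity of $\widetilde{M}$; \emph{transitivity} of the restriction maps is immediate from the functoriality of $-\widehat{\otimes}-$; the \emph{binary rational localization} case is precisely the effective-descent proposition proved immediately above along $W\to B_1\oplus B_2$ for finitely presented modules; and the last, finite-\'etale descent condition is reduced to classical finitely presented faithfully flat descent \cite[Chapitre VIII]{SGAI}. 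In the noetherian setting the topological completeness conditions built into the definition of $V$-\'etale stably pseudocoherent module hold automatically, so no separate functional-analytic verification intervenes here.

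The main obstacle is the finiteness step: one must know that descending a finitely presented module along $W\widehat{\otimes}V\to(B_1\oplus B_2)\widehat{\otimes}V$, or more generally gluing finitely generated data over a rational covering, yields a module that is again \emph{finitely presented} over $W\widehat{\otimes}V$ rather than merely an increasing union of such. This is exactly where the standing hypothesis that $W\widehat{\otimes}V$ --- and the products of the rings occurring in the \v{C}ech complexes --- are noetherian is indispensable, since noetherianness upgrades ``finitely generated after a faithfully flat rational cover'' to ``finitely presented over the base'' and keeps every syzygy module appearing in the resolutions finite; it also lets us invoke the noetherian form of \cite[Lemma 2.14]{TX2} already used in the previous proposition. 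Granting this, a five-lemma argument on the sheafified presentation of $M$ (in the style of the pro-\'etale acyclicity proposition earlier in the paper) closes the comparison, and exactness of the resulting functor follows from the acyclicity together with flatness, giving the asserted equivalence.
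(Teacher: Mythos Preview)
Your overall strategy is right and is essentially what the paper (via its one-line reference to \cite[Theorem~2.15]{TX2}) is doing: build the quasi-inverse $M\mapsto\widetilde{M}$, invoke the Tate acyclicity already proved, and then feed everything into the abstract Kedlaya--Liu glueing criterion, with noetherianness replacing the stably-pseudocoherent hypotheses that appear in the general case. So on the level of approach there is no disagreement.

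There is, however, a scope mismatch in your write-up. The proposition under discussion sits \emph{before} the passage ``Then fixing a stable basis $\mathbb{H}$ for the \'etale site\ldots'' and concerns only the analytic topology (rational localizations); the \'etale analogue is the separate proposition that follows and whose proof the paper attributes to \cite[Theorem~2.23]{TX2}. Consequently the relevant abstract criterion here is \cite[Propositions~2.4.20--2.4.21]{KL1}, not the \'etale versions \cite[Lemma~1.10.4]{Ked1} or \cite[Theorem~2.5.14]{KL2}, and there is no ``last, finite-\'etale descent condition'' to check via \cite[Chapitre~VIII]{SGAI}; likewise the phrase ``$V$-\'etale stably pseudocoherent module'' does not belong in this argument. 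Strip those \'etale ingredients out and your proof collapses to exactly the three inputs the paper uses (flatness of rational localization, acyclicity of $\widetilde{M}$, and effective descent along $W\to B_1\oplus B_2$), which is the content of the cited \cite[Theorem~2.15]{TX2} specialized to the noetherian setting.
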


\begin{proof}
This is also the corresponding noetherian implication of the corresponding result in \cite[Theorem 2.15]{TX2}.	
\end{proof}

\indent Then fixing a stable basis $\mathbb{H}$ for the \'etale site of the space $\mathrm{Spa}(W,W^+)$, then we have the results over the \'etale site as well:

\begin{proposition}
We consider the corresponding presheafification of any finitely generated module $M$ over $W\widehat{\otimes}V$, to be more precise over $\mathrm{Spa}(W,W^+)$ we will define the corresponding presheaf $\widetilde{M}$ by taking the inverse limit of the base changes of $M$ throughout all the member in the basis $\mathbb{H}$. Then we have that the Tate glueing property holds in our current situation for such presheaf $\widetilde{M}$.
\end{proposition}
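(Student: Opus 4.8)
The plan is to reduce the assertion to the two elementary classes of morphisms out of which every member of $\mathbb{H}$ is locally built, namely rational localizations and finite \'etale morphisms, and then to reassemble the local descent data by the glueing formalism of \cite[Proposition 2.4.20, Proposition 2.4.21]{KL1}. For a rational localization $W\widehat{\otimes}V\rightarrow B\widehat{\otimes}V$ the Tate glueing for the presheaf $\widetilde{M}$ is precisely the content of the preceding proposition (the one phrased over all rational localizations), which itself rests on the flatness established in the lemma concerning the morphisms $W\widehat{\otimes}V\rightarrow B_i\widehat{\otimes}V$, valid in our deformed noetherian setting; there is nothing to add there.

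The genuinely new ingredient is the case of a faithfully finite \'etale morphism $W\rightarrow W'$. First I would check that $W\widehat{\otimes}V\rightarrow W'\widehat{\otimes}V$ is again faithfully flat: since $W'$ is finite projective over $W$, one has $W'\widehat{\otimes}V=W'\otimes_W(W\widehat{\otimes}V)$, which is finite projective over $W\widehat{\otimes}V$, and faithfulness survives because $W\rightarrow W'$ is surjective on adic spectra; in the noetherian situation every finitely presented module in sight is automatically complete for its natural topology, so no topological subtlety intervenes. Granting this, finite \'etale (hence fpqc) descent for finitely presented modules applies --- this is the classical statement of \cite[Chapitre VIII]{SGAI} --- and one obtains the glueing of $\widetilde{M}$ along such morphisms, together with the vanishing of the associated higher \v{C}ech cohomology in view of the acyclicity already recorded.

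To pass from these two special cases to an arbitrary covering by members of $\mathbb{H}$, I would invoke that, by the hypothesis on the basis, each such morphism factors locally as a composition of rational localizations and finite \'etale morphisms; transitivity of the glueing property (straightforward, exactly as in the equivalence-of-categories argument above) then composes the two descent statements, while the refinement reducing a general \'etale covering to one built from these special morphisms is supplied by the basis spreading result \cite[Lemma 1.10.4]{Ked1} together with \cite[Theorem 2.12]{TX2} and \cite[Lemma 2.5.10]{KL2}. The main obstacle I anticipate is precisely this last patching step: one must verify that the local factorizations into rational localizations and finite \'etales are mutually compatible on overlaps, so that the locally obtained glueing data assemble into a global one and so that the resulting global section is again finitely presented; this is handled just as in \cite[Lemma 2.5.13, Theorem 2.5.14]{KL2} and \cite[Lemma 2.14]{TX2}, but it is where essentially all of the bookkeeping is concentrated.
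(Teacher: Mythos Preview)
Your argument is sound, and the decomposition into rational localizations plus finite \'etale morphisms, followed by reassembly via the basis spreading and transitivity machinery of \cite[Lemma 1.10.4]{Ked1} and \cite[Lemma 2.5.10]{KL2}, is exactly the right skeleton. The finite \'etale step is handled correctly: in the noetherian setting finitely generated, finitely presented, and stably pseudocoherent all coincide, so the faithfully flat descent of \cite[Chapitre VIII]{SGAI} applies without topological complications.

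That said, the paper states this proposition without proof, and the implicit justification is shorter than yours. The surrounding propositions in this section all follow the pattern ``this is the noetherian implication of the corresponding result in \cite{TX2}'' (see the proofs of the adjacent Propositions 6.8 and 6.9), and the same applies here: once one observes that in the current noetherian setting every finitely generated module is automatically \'etale-stably pseudocoherent, the statement is literally a special case of the general \'etale Tate acyclicity already established earlier (the analog of \cite[Theorem 2.5.11]{KL2}, whose proof in turn invokes \cite[Proposition 8.2.21]{KL1}). So the paper's route is a one-line specialization of a result already in hand, whereas you are effectively unpacking the proof of that general result and re-running it in the noetherian context. Both arrive at the same place; your version has the virtue of being self-contained and making the finite \'etale and rational localization contributions explicit, while the paper's (implicit) version is more economical because the bookkeeping you flag as the ``main obstacle'' was already absorbed once and for all into the \'etale-stably pseudocoherent framework of the earlier sections.
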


\begin{proposition}
The descent (in the \'etale topology) along the morphism $W\rightarrow B_1\bigoplus B_2$ is effective as long as one focuses on the category of all the finitely presented modules over $W\widehat{\otimes}V$. 	
\end{proposition}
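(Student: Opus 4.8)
The plan is to reduce the statement to the two elementary types of morphisms out of which the basis $\mathbb{H}$ for the \'etale site is built — rational localizations and finite \'etale morphisms — and then to combine them by transitivity of descent. \textbf{Rational case.} If the binary decomposition $W\to B_1\oplus B_2$ arises from a rational covering, effectivity for finitely presented modules is precisely the corresponding rational-localization statement proved above (the noetherian incarnation of \cite[Lemma 2.14]{TX2}), and the flatness of the rational localizations $W\widehat{\otimes}V\to B_i\widehat{\otimes}V$ recorded in the lemma and corollary above guarantees that the gluing sequence stays exact and that the glued module remains finitely presented over the noetherian ring $W\widehat{\otimes}V$. \textbf{Finite \'etale case.} If instead the decomposition is refined from a faithfully finite \'etale morphism $W\to B$, then $B\widehat{\otimes}V$ is finite projective over $W\widehat{\otimes}V$ and the descent of finitely presented modules along it is classical faithfully flat (indeed finite \'etale) descent; I would invoke \cite[Chapitre VIII]{SGAI} here, noting that finite \'etale base change preserves finite presentation and that $W\widehat{\otimes}V$ and $B\widehat{\otimes}V$ are both noetherian.

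For a general morphism in $\mathbb{H}$ I would use the basis-spreading result \cite[Lemma 1.10.4]{Ked1} to write it locally as a composite of a rational localization followed by a finite \'etale map, so that effectivity along a general binary \'etale decomposition follows by composing the two cases, transitivity of the descent datum being formal. Concretely, given a descent datum along $W\to B_1\oplus B_2$, I would realize it as a pro-\'etale (hence \'etale) sheaf $G$ on $\mathrm{Spa}(W,W^+)$ using the acyclicity established in \cref{proposition2.7} and its \'etale-site analog used throughout this section, take global sections over $\mathrm{Spa}(W,W^+)$ together with a presentation $0\to K\to C\to G\to 0$ with $C$ finite free, and then read off that $\Gamma(\mathrm{Spa}(W,W^+),G)$ is finitely presented over $W\widehat{\otimes}V$ from the fact that, over a noetherian ring, the kernel of a surjection from a finite free module onto a finitely generated module is again finitely generated.

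The main obstacle I anticipate is exactly this last point: ensuring the reconstructed global section is genuinely \emph{finitely presented} — rather than merely pseudocoherent, as would be the conclusion of \cref{proposition2.10} — over $W\widehat{\otimes}V$. This is where the standing hypothesis of the section is essential, namely that the deformation preserves noetherianness, so that $W\widehat{\otimes}_* V$ and the relevant products of the rings attached to members of $\mathbb{H}$ are all noetherian; without that one only lands in the category of pseudocoherent sheaves. A minor further subtlety is tracking completeness for the natural topology along the finite \'etale layers, but as already observed in the proof of the first lemma of this section, in the noetherian situation the natural topology is the adic one and all base changes in sight are automatically complete, so no additional argument is needed there.
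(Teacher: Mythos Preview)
Your argument is correct in substance, but it is considerably more elaborate than what the paper does. The paper's proof is a single sentence: it simply invokes \cite[Lemma 2.22]{TX2}, which already establishes effectivity of binary \'etale descent for \'etale-stably pseudocoherent modules in the general (non-noetherian) setting, and then observes that under the standing noetherian hypothesis of this section every finitely presented module over $W\widehat{\otimes}V$ is automatically \'etale-stably pseudocoherent (and conversely). In other words, the paper treats this proposition as an immediate specialization of a result already in hand, rather than re-running the rational/finite-\'etale case split, the basis-spreading lemma, and the sheaf-realization argument.

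What you have written is essentially a reconstruction of the proof of \cite[Lemma 2.22]{TX2} itself (compare also Proposition~3.6 of the present paper), with the noetherian simplification layered on at the end. That is perfectly valid and arguably more self-contained; it also makes explicit where each hypothesis is used. The cost is redundancy with material already established earlier in the series. One small remark: your appeal to \cref{proposition2.7} is to a pro-\'etale acyclicity statement, whereas the present proposition lives purely on the \'etale site; the relevant input is the \'etale acyclicity (as in \cite[Theorem 2.5.11]{KL2} and its analog here), not the pro-\'etale one. You acknowledge this by saying ``and its \'etale-site analog,'' but it would be cleaner to cite that analog directly.
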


\begin{proof}
This is the corresponding noetherian implication of the corresponding result in \cite[Lemma 2.22]{TX2}.	
\end{proof}

\begin{proposition} \label{proposition6.9}
The corresponding functor (in the \'etale topology) of taking the corresponding global section will give rise to the corresponding equivalence between the corresponding $V$-coherent sheaves and the $V$ finitely presented modules.
\end{proposition}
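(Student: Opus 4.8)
The plan is to argue in exact parallel with the rational-localization case treated just above (the noetherian incarnation of \cite[Theorem 2.15]{TX2}), upgrading the input data from rational localizations to the fixed stable \'etale basis $\mathbb{H}$, whose members are adic spectra of sheafy rings. Concretely, I would apply the glueing criterion \cite[Lemma 1.10.4]{Ked1} to the presheaf $\widetilde{M}$ associated, over $\mathrm{Spa}(W,W^+)_{\text{\'et}}$, to a finitely presented $W\widehat{\otimes}V$-module $M$ (inverse limit of the base changes of $M$ along morphisms in $\mathbb{H}$), and then verify the hypotheses of that criterion one at a time.

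First, the acyclicity/refinement hypothesis is precisely the Tate glueing statement for $\widetilde{M}$ over $\mathbb{H}$ recorded above, which itself rests on the flatness of the maps $W\widehat{\otimes}V\to B_\bullet\widehat{\otimes}V$ supplied by the noetherianness assumption (via \cite[Proposition 2.4.20, Proposition 2.4.21]{KL1}). Second, transitivity of a descent datum is formal. Third, the binary rational localization case is exactly the effective-descent proposition along $W\rightarrow B_1\bigoplus B_2$ stated above for finitely presented modules, i.e. the noetherian consequence of \cite[Lemma 2.22]{TX2}. Fourth, the remaining faithfully finite \'etale descent step should follow from classical faithfully flat descent \cite[Chapitre VIII]{SGAI}: a faithfully finite \'etale $W\to W'$ stays faithfully finite \'etale after applying $-\widehat{\otimes}V$, because in the noetherian regime the completed tensor product agrees with the algebraic one and $V$ is flat/splitting over the base field, and descent of modules along such a map is effective and preserves finite presentation since $W'\widehat{\otimes}V$ is again noetherian by hypothesis. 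Combining these four points, \cite[Lemma 1.10.4]{Ked1} gives that the global-section functor from $V$-coherent sheaves on $X_{\text{\'et}}$ to $W\widehat{\otimes}V$-modules is fully faithful with essential image the finitely presented modules; conversely each finitely presented $M$ produces a genuine coherent sheaf, since every rational or finite \'etale base change in $\mathbb{H}$ is flat and preserves finite presentation, so $\widetilde{M}$ already has the correct stalks. Hence the functor is an equivalence between $V$-coherent sheaves and $V$-finitely presented modules.

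The main obstacle I expect is the fourth step, the finite \'etale piece: one must ensure that the interaction of the \'etale base change with the noncommutative coefficient $V$ does not destroy finite presentation or flatness, and that the descent datum one obtains is genuinely effective in the $V$-deformed category rather than only after forgetting $V$. This is exactly where the standing hypothesis that $W\widehat{\otimes}_* V$, together with all the relevant rational localizations and finite \'etale covers, remain noetherian is essential: it collapses the completed-tensor-product subtleties to purely algebraic statements, so that \cite[Chapitre VIII]{SGAI} applies essentially verbatim and no analytic input beyond the acyclicity proven earlier is needed. Everything else — the refinement, the binary case, and the formal transitivity — has already been established in the preceding propositions, so the proof is a bookkeeping assembly once the finite \'etale descent is pinned down.
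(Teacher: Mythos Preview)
Your proposal is correct and follows the same underlying strategy as the paper, just spelled out in far more detail: the paper's own proof is a one-line reference, ``This is also the corresponding noetherian implication of the corresponding result in \cite[Theorem 2.23]{TX2},'' whereas you unpack the verification of the hypotheses of \cite[Lemma 1.10.4]{Ked1} (acyclicity, transitivity, binary rational case, finite \'etale case) exactly as the paper itself does in the parallel non-noetherian statements in Sections 3--5. Your identification of the finite \'etale step as the only place requiring genuine care, and your explanation that the standing noetherian hypothesis collapses the completed tensor products to algebraic ones so that \cite[Chapitre VIII]{SGAI} applies, is precisely the point.
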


\begin{proof}
This is also the corresponding noetherian implication of the corresponding result in \cite[Theorem 2.23]{TX2}.	
\end{proof}

\

\subsection{Application to Descent over Noncommutative $\infty$-Analytic Prestacks after Bambozzi-Ben-Bassat-Kremnizer}

\indent We now contact with the corresponding derived analytic spaces from \cite{BBBK}. Recall that we have the categories $\mathrm{Simp}(\mathrm{Ind}^m(\mathrm{BanSets}_{E}))$ and $\mathrm{Simp}(\mathrm{Ind}(\mathrm{BanSets}_{E}))$ of the corresponding simplicial sets in the corresponding inductive category of Banach sets over $E=\mathbb{Q}_p,\mathbb{F}_p((t))$.

\begin{theorem}\mbox{\bf{(Bambozzi-Ben-Bassat-Kremnizer)}}
The categories $\mathrm{Simp}(\mathrm{Ind}^m(\mathrm{BanSets}_{E}))$ and $\mathrm{Simp}(\mathrm{Ind}(\mathrm{BanSets}_{E}))$ admit symmetric monoidal model categorical structure.
	
\end{theorem}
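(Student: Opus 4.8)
The plan is to follow the construction of Bambozzi--Ben-Bassat--Kremnizer \cite{BBBK} (compare also \cite{BK} and \cite{BBK}), reducing the statement to a transfer problem from simplicial sets. The first step is to record that $\mathrm{Ind}(\mathrm{BanSets}_{E})$ and its monomorphic variant $\mathrm{Ind}^m(\mathrm{BanSets}_{E})$ are locally presentable closed symmetric monoidal categories: the category $\mathrm{BanSets}_{E}$ is essentially small and carries a symmetric monoidal product, so its ind-completion inherits a closed symmetric monoidal structure by Day convolution and is locally presentable by the standard description of ind-objects over a small category; for $\mathrm{Ind}^m$ one checks in addition that the Day convolution of two monomorphic ind-diagrams is again (equivalent to) a monomorphic ind-diagram, which is where the filtered-colimit and $\mathbb{F}_1$-type flatness bookkeeping of \cite{BBBK} enters. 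Write $\mathcal{C}$ for either category.

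Next I would equip $\mathrm{Simp}(\mathcal{C})=\mathrm{Fun}(\Delta^{\mathrm{op}},\mathcal{C})$ with the model structure transferred along the free--forgetful adjunction from simplicial sets, in the style of Quillen's homotopical algebra and the Goerss--Jardine presentation: a map is a weak equivalence (resp.\ fibration) exactly when it becomes one after applying the canonical ``underlying simplicial set'' functor coming from the generators of $\mathcal{C}$ (equivalently, after the cotriple resolution attached to the free-object adjunction), and cofibrations are generated by the images of the boundary inclusions $\partial\Delta[n]\hookrightarrow\Delta[n]$ tensored with the generators. Local presentability of $\mathcal{C}$ makes the small-object argument applicable and ensures the generating (trivial) cofibrations form a set, so the transferred structure is combinatorial; one must still verify the acyclicity condition, namely that relative cell complexes built from the generating trivial cofibrations are weak equivalences, which here follows from exactness of filtered colimits together with a Dold--Kan type analysis of simplicial objects in $\mathcal{C}$.

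It then remains to check the symmetric monoidal model category axioms: the pushout--product axiom and the unit axiom. The pushout--product axiom is deduced from the corresponding axiom in $\mathrm{sSet}$ via the levelwise formula for the Day-convolution tensor on simplicial objects, using that $\widehat{\otimes}$ preserves colimits separately in each variable (closedness of $\mathcal{C}$) and carries generating cofibrations to cofibrations; the unit axiom is automatic once the monoidal unit is cofibrant, which one arranges by the usual cofibrant-replacement argument, and the monoid axiom of Schwede--Shipley is verified in the same way, so that the structure propagates to monoids and modules as needed for the applications to $\infty$-analytic prestacks.

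The main obstacle is the homotopical control of the completed tensor product: $\widehat{\otimes}$ fails to be exact on all of $\mathrm{Ind}(\mathrm{BanSets}_{E})$, being well-behaved only on the flat (equivalently, ``elementary'' or projective-type) objects, so to force the pushout--product and monoid axioms one has to show that cofibrant simplicial objects are levelwise flat --- i.e.\ that the generating cofibrations are assembled from flat objects and that flatness is preserved under the pushouts and transfinite compositions occurring in cell complexes. This flatness bookkeeping, together with the set-theoretic smallness estimates needed to keep the generating families small, is the delicate part; everything else is a formal consequence of local presentability and the closed symmetric monoidal structure, and the full details are carried out in \cite{BBBK}.
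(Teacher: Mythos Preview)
The paper does not supply a proof of this theorem: it is stated purely as a citation of Bambozzi--Ben--Bassat--Kremnizer \cite{BBBK}, with no argument given, and is used only as background input for the subsequent discussion of $\infty$-analytic prestacks. There is therefore no ``paper's own proof'' to compare your proposal against.

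Your write-up is a plausible high-level outline of how such a transferred model structure is typically built (local presentability, Day convolution, transfer along a free--forgetful adjunction, pushout--product axiom), and you correctly flag the genuine difficulty, namely controlling the homotopical behaviour of the completed tensor product on non-flat objects. But precisely because that is the delicate point, your sketch is not a self-contained proof: the acyclicity condition for the transferred structure and the levelwise-flatness of cofibrant objects are asserted rather than verified, and you ultimately defer to \cite{BBBK} for them. In the context of this paper that is exactly the right move --- the theorem is an external input, not something to be reproved here --- so the appropriate ``proof'' is simply a reference to \cite{BBBK}, which is what the paper does.
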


\indent Therefore based on this nice structure \cite{BBBK} defined the corresponding $\infty$-categories of $\mathbb{E}_\infty$-rings:
\begin{align}
\mathrm{sComm}(\mathrm{Simp}(\mathrm{Ind}^m(\mathrm{BanSets}_{E})))\\
\mathrm{sComm}(\mathrm{Simp}(\mathrm{Ind}(\mathrm{BanSets}_{E}))).	
\end{align}

\indent We now consider the corresponding some object $A$ which is a corresponding $\infty$-locally convex ring in the categories above and we consider the corresponding object in the corresponding opposite category we call that $\mathrm{Spec}A$. And we consider the corresponding homotopy Zariski topology, which allows us to talk about the corresponding $\infty$-analytic stacks.

\indent Now recall that a connective $\mathbb{E}_1$-ring is called noetherian if we have that $\pi_0(A)$ is noetherian and $\pi_n(A)$ is basically finitely generated over $\pi_0(A)$.

\begin{remark}
The corresponding Koszul simplicial Banach ring considered in \cite{BK} is actually concentrated in the nonpositive degrees\footnote{Thanks for Federico Bambozzi for reminding us of this.}, in the cohomological convention, namely $\mathrm{H}^{-n}(.)=0$ for $n<0$. But as in any abstract homotopy theory we consider the corresponding conventional transformation:
\begin{align}
\pi_n:=	\mathrm{H}^{-n},n\geq 0.
\end{align} 	
\end{remark}

\indent Now we consider an $\infty$-analytic stack $X$ as considered in \cite{BBBK}. And we consider the corresponding ringed site attached to $X$ under the corresponding homotopy Zariski topology, denoted by $(X,\mathcal{O}_X)$.

\begin{example}
The examples which are very interesting to us are the corresponding $\infty$-Huber spectra constructed from any Banach rings over $E$ from \cite{BK} by using Koszul complex. The corresponding classical sheafiness issue could be really forgotten.	
\end{example}

\indent Now we consider the corresponding $\mathbb{E}_1$-ring $A$ in the corresponding categories $\mathrm{Simp}(\mathrm{Ind}^m(\mathrm{BanSets}_{E}))$ and $\mathrm{Simp}(\mathrm{Ind}(\mathrm{BanSets}_{E}))$. We make the following assumption:

\begin{assumption}
We assume that all the ring objects below are noetherian, as $\mathbb{E}_1$-rings.  	
\end{assumption}

\indent Suppose now we have four $\mathbb{E}_1$ Banach rings $A,A_1,A_2,A_{12}$\footnote{Namely all the homotopy groups have to be Banach instead of Bornological or ind-Banach.} in $\mathrm{Simp}(\mathrm{Ind}^m(\mathrm{BanSets}_{E}))$ and $\mathrm{Simp}(\mathrm{Ind}(\mathrm{BanSets}_{E}))$ respectively. And we assume that we have the following short strictly exact sequence in the sense of a glueing square:
\[
\xymatrix@C+0pc@R+0pc{
0 \ar[r] \ar[r] \ar[r] &\pi_0(A) \ar[r] \ar[r] \ar[r] &\pi_0(A_1)\oplus \pi_0(A_2)  \ar[r] \ar[r] \ar[r] &\pi_0(A_{12}) \ar[r] \ar[r] \ar[r] &0 ,   
}
\]
and we assume that the image of $\pi_0(A_i)$ in $\pi_0(A_{12}) $ is dense for each $i=1,2$. And we assume the corresponding morphism $\pi_0(A_i)\rightarrow \pi_0(A_{12})$ is flat for $i=1,2$. And we assume that $A,A_1,A_2,A_{12}$ form the corresponding derived glueing sequence.

\begin{example}
One can construct the following example. First consider any glueing sequence of Banach rings (not required to be sheafy or commutative) over $\mathbb{Q}_p$:
\[
\xymatrix@C+0pc@R+0pc{
0 \ar[r] \ar[r] \ar[r] &\Pi \ar[r] \ar[r] \ar[r] &\Pi_1\oplus \Pi_2  \ar[r] \ar[r] \ar[r] &\Pi_{12} \ar[r] \ar[r] \ar[r] &0.  
}
\]	
This is very natural in the corresponding situation for example where we deform from a corresponding nice short exact sequence of commutative sheafy rings of the same type, but in any rate we do not require the corresponding commutativity or the correponding sheafyness. Then take any derived global section of some $\mathrm{Spa}^h(R)$ from \cite{BK}, denoted by $R^h$. Over $\mathbb{Q}_p$, we have the following short strictly exact sequence:
\[
\xymatrix@C+0pc@R+0pc{
0 \ar[r] \ar[r] \ar[r] &\Pi\widehat{\otimes}_{\mathbb{Q}_p} \pi_0(R^h) \ar[r] \ar[r] \ar[r] &\Pi_1\widehat{\otimes}_{\mathbb{Q}_p} \pi_0(R^h)\oplus \Pi_2\widehat{\otimes}_{\mathbb{Q}_p} \pi_0(R^h)  \ar[r] \ar[r] \ar[r] &\Pi_{12}\widehat{\otimes}_{\mathbb{Q}_p} \pi_0(R^h) \ar[r] \ar[r] \ar[r] &0 ,   
}
\]
but since the corresponding ring $\Pi,\Pi_1,\Pi_2,\Pi_{12}$ will actually realize the situation where $\Pi_*\widehat{\otimes} \pi_0(R^h)\overset{\sim}{\rightarrow}\pi_0(\Pi_*\widehat{\otimes}^\mathbb{L}R^h)$, we will have the desired situation as long as restrict to now the noetherian situation.
\end{example}

\begin{conjecture}
The map $A\rightarrow \prod_{i=1,2} A_i$ is an effective descent morphism with respect to the finitely presented left module spectra (meaning we have finitely presented $\pi_0$).	
\end{conjecture}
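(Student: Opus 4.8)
The plan is to reduce the conjecture to the effective descent statements already established over adic Banach rings, by passing through Postnikov towers. First I would observe that the given data — the four $\mathbb{E}_1$-rings $A, A_1, A_2, A_{12}$ forming a derived glueing sequence, together with the strictly exact sequence on $\pi_0$ and the flatness and density hypotheses on $\pi_0(A_i)\to\pi_0(A_{12})$ — exhibits $\pi_0(A)\to\prod_{i=1,2}\pi_0(A_i)$ as precisely a binary rational-type decomposition in the sense of \cite[Lemma 2.14]{TX2} and the noetherian descent results of \cref{section6}. Hence by \cite{TX2} (and the noetherian refinements above, e.g.\ the analog of \cref{proposition6.9}) the functor $M\mapsto (M\otimes_{\pi_0(A)}\pi_0(A_1), M\otimes_{\pi_0(A)}\pi_0(A_2), \text{descent datum})$ is an equivalence onto the category of descent data of finitely presented $\pi_0$-modules. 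This handles the truncated ($\pi_0$-level) statement, which is the base case of the induction.

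Next I would upgrade this to the connective $\mathbb{E}_1$-level by induction on the Postnikov tower. Writing $\tau_{\leq n}A$ for the $n$-th Postnikov truncation, one has the square of truncations $\tau_{\leq n}A \to \prod_i \tau_{\leq n}A_i$, and the fibre of $\tau_{\leq n}A \to \tau_{\leq n-1}A$ is the shifted module $\pi_n(A)[n]$. Since $A$ (and each $A_i, A_{12}$) is noetherian as an $\mathbb{E}_1$-ring, each $\pi_n(A)$ is a finitely generated $\pi_0(A)$-module; by the $\pi_0$-case already established, finitely presented (equivalently, by noetherianity, finitely generated) module data glue along the square, so $\pi_n(A)$ itself is obtained by descent from $\pi_n(A_1), \pi_n(A_2)$. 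One then feeds this, together with the inductive hypothesis for $\tau_{\leq n-1}$, into the standard square-zero obstruction/deformation argument: the category of finitely presented module spectra over $\tau_{\leq n}A$ is built from that over $\tau_{\leq n-1}A$ by a square-zero extension classified by an element of an André–Quillen / $\mathrm{Ext}$ group, and these obstruction groups are themselves finitely presented $\pi_0$-modules, hence satisfy descent along the square. A limit over the Postnikov tower — using that finitely presented module spectra over a connective noetherian $\mathbb{E}_1$-ring are detected on finitely many homotopy groups, together with the vanishing of the relevant $R^1\varprojlim$ exactly as in \cite[Lemma 2.6.3]{KL2} — then recovers the statement for $A$ itself.

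The main obstacle I expect is the square-zero deformation step, i.e.\ showing that the $\mathbb{E}_1$-structure (not merely the underlying module) descends: one must check that the obstruction theory is compatible with the glueing square, which amounts to verifying that the relevant cotangent-complex / Hochschild-type invariants of $A$ are recovered as the homotopy limit of those of $A_1, A_2, A_{12}$, and that the flatness hypothesis on $\pi_0(A_i)\to\pi_0(A_{12})$ is strong enough to control the $\mathrm{Tor}$-spectral sequences comparing $\pi_*(A_i)\otimes^{\mathbb{L}}$ with $\pi_*$ of the derived tensor (the condition $\Pi_*\widehat{\otimes}\pi_0(R^h)\xrightarrow{\sim}\pi_0(\Pi_*\widehat{\otimes}^{\mathbb{L}}R^h)$ in the preceding example is exactly the kind of input needed here). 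A secondary subtlety is noncommutativity: since $A$ is only $\mathbb{E}_1$, one works with left module spectra throughout and must ensure the descent datum is recorded on the correct side, but as remarked earlier in the paper this is harmless in the site-theoretic formulation. Once these compatibilities are in place, the conjecture follows by assembling the Postnikov-tower induction with the already-proven noetherian \'etale descent of \cref{section6}.
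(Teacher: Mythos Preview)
The statement you are attempting to prove is labelled in the paper as a \emph{Conjecture}, and the paper gives no proof of it. What the paper does prove is only the partial result stated in the Proposition immediately following the conjecture: that taking the equalizer along $\prod_{i=1,2} A_i \to A_{12}$ preserves the property of being finitely presented. That proposition is argued by reducing, homotopy-group by homotopy-group, to the classical noetherian glueing of \cite[Lemma 2.14]{TX2} and \cite[Lemma 6.83]{TX4}, using that the connecting homomorphisms vanish so that each $\pi_n$ can be handled separately as an ordinary finitely presented module. The full effective-descent statement --- fully faithfulness together with essential surjectivity of base change on finitely presented left module spectra --- is left open by the author.

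Your proposal is therefore not comparable to any proof in the paper; it is an attempted resolution of an open conjecture. As a strategy the outline is sensible: the $\pi_0$-case is genuinely covered by the noetherian descent of \cref{section6}, and a Postnikov-tower induction is the natural way to try to upgrade. But the obstacles you yourself flag are precisely why the author stopped at a conjecture. The square-zero step requires that the relevant $\mathbb{E}_1$ cotangent-complex / Hochschild-type obstruction groups be compatible with the glueing square in the Banach/ind-Banach setting of \cite{BBBK}, and nothing in the stated hypotheses (flatness and density only at the level of $\pi_0$, together with a derived glueing sequence) guarantees this. Your invocation of \cite[Lemma 2.6.3]{KL2} for the $R^1\varprojlim$ vanishing is also not obviously transportable: that lemma is about Banach modules over a quasi-Stein tower, not Postnikov towers of module spectra over $\mathbb{E}_1$-rings. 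In short, the proposal is a plausible blueprint with honestly identified gaps, but it goes well beyond what the paper establishes, and there is no paper proof against which to measure it.
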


\begin{proposition}
The operation of taking equalizer along the corresponding map $\prod_{i=1,2} A_i\rightarrow A_{12}$ preserves the property of being finitely presented.	
\end{proposition}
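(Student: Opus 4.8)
The plan is to realize the equalizer as a fiber in the stable $\infty$-category of left module spectra, to use flatness of the transition maps to turn the base changes occurring in it into honest degreewise tensor products, and then to invoke in each homotopical degree the classical noetherian Tate glueing already established in this section. Concretely, a finitely presented descent datum along $\prod_{i=1,2}A_i\to A_{12}$ is a pair $(M_1,M_2)$ with $M_i$ a finitely presented left $A_i$-module spectrum together with an equivalence $\iota:M_1\widehat{\otimes}^{\mathbb{L}}_{A_1}A_{12}\xrightarrow{\ \sim\ }M_2\widehat{\otimes}^{\mathbb{L}}_{A_2}A_{12}$; write $M_{12}$ for the common object. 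Since module spectra form a stable category, the equalizer of the two canonical maps $M_1\oplus M_2\rightrightarrows M_{12}$ is the fiber of their difference,
\[
M:=\mathrm{fib}\bigl(M_1\oplus M_2\xrightarrow{\ \delta\ }M_{12}\bigr),\qquad \delta(m_1,m_2)=\mathrm{can}_1(m_1)-\iota(\mathrm{can}_2(m_2)),
\]
and because $A=\mathrm{fib}(A_1\oplus A_2\to A_{12})$ by the derived glueing sequence hypothesis, $M$ is canonically an $A$-module spectrum. Since a finitely presented module spectrum is by definition one whose $\pi_0$ is finitely presented, it then suffices to show $\pi_0(M)$ is finitely presented over $\pi_0(A)$.

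Next I would invoke flatness of the transition maps $A_i\to A_{12}$. This is precisely the situation of the Example above, where the relevant rings arise by base change from the classical flat rational localizations, and more generally it follows from the standing flatness hypothesis on $\pi_0 A_i\to\pi_0 A_{12}$ together with the noetherian assumption. Consequently $M_i\widehat{\otimes}^{\mathbb{L}}_{A_i}A_{12}$ is underived degreewise, $\pi_n(M_{12})\cong\pi_n(M_i)\widehat{\otimes}_{A_i}A_{12}$ for all $n$, and the equivalence $\iota$ exhibits $(\pi_n M_1,\pi_n M_2,\iota_\ast)$ as a descent datum of modules over the classical glueing square $(\pi_0 A,\pi_0 A_1,\pi_0 A_2,\pi_0 A_{12})$ for every $n$. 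The fiber sequence defining $M$ gives the long exact sequence
\[
\cdots\to\pi_{n+1}(M_1)\oplus\pi_{n+1}(M_2)\xrightarrow{\ \delta_\ast\ }\pi_{n+1}(M_{12})\to\pi_n(M)\to\pi_n(M_1)\oplus\pi_n(M_2)\xrightarrow{\ \delta_\ast\ }\pi_n(M_{12})\to\cdots .
\]

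Now I would apply the classical noetherian Tate glueing degree by degree. The square $(\pi_0 A,\pi_0 A_1,\pi_0 A_2,\pi_0 A_{12})$ is a genuine Tate glueing square by the standing hypotheses (the displayed short strictly exact sequence, density of the images, and flatness of $\pi_0 A_i\to\pi_0 A_{12}$), and each $\pi_n(M_i)$ is finitely generated, hence finitely presented, over $\pi_0(A_i)$ because a finitely presented module spectrum is in particular perfect and $\pi_0(A_i)$ is noetherian. Applied to $(\pi_0 M_1,\pi_0 M_2,\iota_\ast)$, the glueing shows that $0\to N\to\pi_0 M_1\oplus\pi_0 M_2\to\pi_0 M_{12}\to 0$ is short exact with $N$ finitely presented over $\pi_0 A$; applied to $(\pi_1 M_1,\pi_1 M_2,\iota_\ast)$ it shows moreover that $\delta_\ast:\pi_1 M_1\oplus\pi_1 M_2\to\pi_1 M_{12}$ is surjective, this being the vanishing of the first \v{C}ech cohomology of the associated presheaf, exactly as in the earlier propositions. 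Feeding these two facts into the long exact sequence in degree $n=0$ collapses it to an isomorphism $\pi_0(M)\cong N$, so $\pi_0(M)$ is finitely presented over $\pi_0(A)$, which is the assertion. (Running the same argument in every degree shows in fact $\pi_n(M)\cong$ the glued module of $(\pi_n M_1,\pi_n M_2)$, so $M$ is even almost perfect over $A$.)

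The main obstacle is exactly the flatness input used in the second step. When the transition maps $A_i\to A_{12}$ fail to be flat in the $\mathbb{E}_1$-sense, the derived base change $M_i\widehat{\otimes}^{\mathbb{L}}_{A_i}A_{12}$ acquires higher $\mathrm{Tor}$ contributions -- such as $\pi_0 M_i\widehat{\otimes}_{\pi_0 A_i}\pi_1 A_{12}$ -- which are only finitely generated over $\pi_0 A_{12}$ and not over $\pi_0 A$, and which can enter $\pi_0(M)$ through the connecting homomorphism; controlling them would require a genuine convergence argument for the descent spectral sequence in the spirit of \cite[Proposition 8.2.21]{KL1} and \cite[Theorem 3.4.6]{KL2}, which is presumably why the effective descent statement is left as a Conjecture while only this partial statement is proven. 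A secondary, entirely routine point is the compatibility of the two maps $M_1\oplus M_2\to M_{12}$ with the $A$-module structure, which is immediate from the derived glueing sequence.
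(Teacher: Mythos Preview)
Your approach is essentially the same as the paper's: reduce to the classical noetherian Tate glueing one homotopical degree at a time by showing the connecting homomorphisms in the long exact sequence of the fiber vanish. The paper's proof is much terser, simply citing \cite[Lemma 2.14]{TX2} and \cite[Lemma 6.83]{TX4} and remarking that ``the connecting homomorphism vanishes on each level''; what you have written is a careful unpacking of exactly that sentence.

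One step deserves more care. You write that ``each $\pi_n(M_i)$ is finitely generated, hence finitely presented, over $\pi_0(A_i)$ because a finitely presented module spectrum is in particular perfect''. In the paper's convention a finitely presented module spectrum is one whose $\pi_0$ is finitely presented; this does not by itself force the higher homotopy groups to be finitely generated, and it certainly does not make the module perfect in the sense of compactness in the derived category. Your argument for the surjectivity of $\delta_\ast$ at level $1$ (hence the vanishing of the connecting map into $\pi_0(M)$) invokes the classical glueing lemma for the datum $(\pi_1 M_1,\pi_1 M_2,\iota_\ast)$, and that lemma needs some finiteness on $\pi_1(M_i)$ to go through. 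The paper does not spell out how this is arranged either---it is packaged into the cited lemmas and into the standing assumption that $A,A_1,A_2,A_{12}$ form a \emph{derived} glueing sequence---so you are not wrong so much as relying on an inference that is not immediate from the stated definition. If you want the argument to stand on its own, either assume the $M_i$ are almost perfect (so that all $\pi_n(M_i)$ are finitely generated over the noetherian $\pi_0(A_i)$), or argue the surjectivity of $\delta_\ast$ directly from the density and strict exactness hypotheses without passing through the finitely-presented glueing lemma at level $1$.
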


\begin{proof}
The situation where we deform from the corresponding $\mathbb{E}_\infty$ by some noncommutative deformation could be implied by \cite[Lemma 2.14]{TX2}. While in general this follows from \cite[Lemma 6.83]{TX4}. Note that we achieve the finiteness for each $\pi_n(M)$ just by application of the results we know in the classical deformed situation, which is because the connecting homomorphism vanishes on each level.
\end{proof}

\indent What could happen in the commutative setting is actually also interesting. We have the following example in mind:

\begin{example}
Consider the context of the previous example, but assume that all the rings involved are Banach commutative. First we consider over $\mathbb{Q}_p$:
\[
\xymatrix@C+0pc@R+0pc{
0 \ar[r] \ar[r] \ar[r] &\Pi \ar[r] \ar[r] \ar[r] &\Pi_1\oplus \Pi_2  \ar[r] \ar[r] \ar[r] &\Pi_{12} \ar[r] \ar[r] \ar[r] &0,  
}
\]		
a strictly exact sequence of sheafy rings. We now deform this along some spectrum $\mathrm{Spa}^h(R)$ in \cite{BK} which will produce a very interesting situation where we can actually obtain desired situation for the descent of finitely presented module spectra. And note that in commutative setting we could also have more geometric contexts to rely on as in \cite{BK} and \cite{BBBK}. This may have contact with the corresponding derived Galois deformation theory of \cite{GV} for instance by considering the corresponding simplicial pro-Artin rings.
\end{example}

\begin{remark}
The corresponding machinery from \cite{CS} should definitely reflect similar things here even in the $\mathbb{E}_1$-ring context. Also we would like to mention in the commutative setting that actually the descent for certain quasi-coherent modules are also considered extensively in \cite{BBK}. Strikingly the ideas in \cite{BBK} (although developed in a quite commutative setting) come from partially work from Kontsevich-Rosenberg \cite{KR} namely essentially the noncommutative descent. Our feeling is that definitely the descent for certain quasi-coherent modules in \cite{BBBK} could be established to noncommutative setting in some form both in the archimedean and nonarchimedean situations. 
\end{remark}




\newpage

\subsection*{Acknowledgements} 
We would like to thank Professor Kedlaya for helpful discussion on the corresponding materials in the chapter \cite{Ked1}, especially the discussion beyond the book which helps us have the chance to enrich the presentation here.


\bibliographystyle{ams}

\end{document}